\newtheorem{theorem}{Theorem}[section]
\newtheorem{lemma}[theorem]{Lemma}
\newtheorem{prop}[theorem]{Proposition}
\newtheorem{cor}[theorem]{Corollary}
\theoremstyle{definition}
\newtheorem{example}[theorem]{Example}
\theoremstyle{remark}
\newtheorem{remark}[theorem]{\bf{Remark}}
\numberwithin{equation}{section}
\begin{document}
\title [Berezin number and  Berezin norm inequalities for operator matrices]{Berezin number and  Berezin norm inequalities for operator matrices}
	\author[P. Bhunia, A. Sen, S. Barik and K. Paul]{Pintu Bhunia, Anirban Sen, Somdatta Barik  and Kallol Paul}

\address[Bhunia]{Department of Mathematics, Indian Institute of Science, Bengaluru 560012, Karnataka, India}
\email{pintubhunia5206@gmail.com}

\address[Sen] {Department of Mathematics, Jadavpur University, Kolkata 700032, West Bengal, India}
\email{anirbansenfulia@gmail.com}

\address[Barik] {Department of Mathematics, Jadavpur University, Kolkata 700032, West Bengal, India}
\email{bariksomdatta97@gmail.com}

\address[Paul] {Department of Mathematics, Jadavpur University, Kolkata 700032, West Bengal, India}
\email{kalloldada@gmail.com}

	\thanks{Dr. Pintu Bhunia would like to thank SERB, Govt. of India for the financial support in the form of National Post Doctoral Fellowship (N-PDF, File No. PDF/2022/000325) under the mentorship of Prof. Apoorva Khare. Mr. Anirban Sen would like to thank CSIR, Govt. of India, for the financial
		support in the form of Senior Research Fellowship under the mentorship of
		Prof. Kallol Paul. Miss Somdatta Barik would like to thank UGC, Govt. of India for the financial support in the form of Junior Research Fellowship  under the mentorship of Prof. Kallol Paul. }

\subjclass[2020]{Primary: 47A30, 47A12; Secondary: 47A63, 15A60.}

\keywords{Berezin norm, Berezin number, Numerical radius, Operator norm, Reproducing kernel Hilbert space}

\begin{abstract}
	We establish new upper bounds for Berezin number and Berezin norm of operator matrices, which are refinements of  the existing bounds. Among other bounds, we prove that if $A=[A_{ij}]$ is an $n\times n$ operator matrix with $A_{ij}\in\mathbb{B}(\mathcal{H})$ for $i,j=1,2\dots n$, then 
	$\|A\|_{ber}	\leq \left\|\left[\|A_{ij}\|_{ber}\right]\right\|$ and $\textbf{ber}(A) \leq w([a_{ij}]),$ where $a_{ii}=\textbf{ber}(A_{ii}),$ $a_{ij}=\big\||A_{ij}|+|A^*_{ji}|\big\|^{\frac{1}{2}}_{ber} \big\||A_{ji}|+|A^*_{ij}|\big\|^{\frac{1}{2}}_{ber}$ if $i<j$ and $a_{ij}=0$ if $i>j$. Further, we give some examples for the Berezin number and Berezin norm estimation of operator matrices on the Hardy-Hilbert space.
\end{abstract}

\maketitle	
\section{Introduction}	
Let $\mathbb{H}$ be a complex Hilbert space endowed with an inner product $\langle \cdot,\cdot \rangle$ and $\mathbb{B}(\mathbb{H})$ be the collection of all bounded linear operators on $\mathbb{H}.$ An operator $A \in \mathbb{B}(\mathbb{H})$ is positive if $\langle Ax,x \rangle \geq 0$ for all $x \in \mathbb{H}.$ For $A \in \mathbb{B}(\mathbb{H})$, the absolute value of $A$ is the positive operator $|A|=(A^*A)^{1/2}.$ Recall that, the numerical range of $A$ is denoted by $W(A)$ and is defined as $W(A)=\left\{\langle Ax,x\rangle : x \in \mathbb{H},\|x\|=1\right\}.$ The famous Toeplitz-Hausdroff theorem states that the numerical range of an operator is a convex set. The usual operator norm and the numerical radius of $A$ are, respectively, defined by $\|A\|=\sup\left\{|\langle Ax,y\rangle| : x,y \in \mathbb{H},\|x\|=\|y\|=1\right\}$ and $w(A)=\sup\left\{|\lambda| : \lambda \in W(A)\right\}.$ It is well known that $w(\cdot)$ defines a norm on $\mathbb{B}(\mathbb{H})$ and satisfies the following inequality:
\begin{align}\label{E1}
	\frac{\|A\|}{2} \leq w(A) \leq \|A\|.
\end{align}
The inequality \eqref{E1} is sharp, in fact $w(A)=\frac{\|A\|}{2}$  if $A^2=0$ and $w(A)=\|A\|$ if $A$ is normal. For more on numerical radius inequalities and related results  we refer the readers to \cite{Bhunia_book,BP_LAA_21,BP_RM_21,BP_BSM_21,BP_AM_2021,CFK_NFAO_23,KZ_JCAM_23} and references therein.

Let $\Omega$ be a non empty set. A reproducing kernel Hilbert space (RKHS in short) $\mathcal{H}=\mathcal{H}(\Omega)$ is a Hilbert space of complex valued functions on the set $\Omega,$ where point evaluations are continuous, i.e., for each $\lambda \in \Omega$ the map $E_{\lambda} : \mathcal{H} \to \mathbb{C}$ defined by  $E_{\lambda}(f)=f(\lambda)$ is a bounded linear functional on $\mathcal{H}$ (see \cite{Paulsen_book}). By the Riesz representation theorem, for each $\lambda \in \Omega$ there exists a unique element $k_{\lambda} \in \mathcal{H}$ such that $E_{\lambda}(f)=\langle f,k_{\lambda} \rangle$ for all $f \in \mathcal{H}.$ The collection of functions  $\{k_\lambda :  \lambda \in \Omega \}$ is the set of all reproducing kernel of $\mathcal{H}$ and $\{\hat{k}_{\lambda}=k_\lambda/\|k_\lambda\| :  \lambda \in \Omega\}$ is the set of all normalized reproducing kernel of $\mathcal{H}$.  For  $A \in  \mathbb{B}(\mathcal{H}),$ the function $\widetilde{A}$ defined on $\Omega$ by $\widetilde{A}(\lambda)=\langle A\hat{k}_{\lambda},\hat{k}_{\lambda} \rangle$,  is called the Berezin symbol of $A$ (see \cite{BER1974,BER1972}). We recall that the Berezin Set, Berezin number and Berezin norm of $A$ are denoted by $\textbf{Ber}(A),\textbf{ber}(A)$ and $\|A\|_{ber},$ respectively, and defined as $\textbf{Ber}(A)=\left\{\widetilde{A}(\lambda) : \lambda \in \Omega\right\},$
$\textbf{ber}(A)=\sup\left\{|\widetilde{A}(\lambda)| : \lambda \in \Omega\right\}$
and $\|A\|_{ber}=\sup \left\{ \big|\langle A\hat{k}_{\lambda},\hat{k}_{\mu} \rangle\big| : \lambda, \mu \in \Omega\right\}$ (see \cite{BY_JIA_2020,KAR_JFA_2006}).  It is clear from the definitions that $\textbf{Ber}(A) \subseteq W(A),$ $\textbf{ber}(A)\leq w(A)$ and $\textbf{ber}(A) \leq \|A\|_{ber} \leq \|A\|.$ Although Berezin set is a subset of numerical range but unlike numerical range, the Berezin set of an operator is not convex, in general. For more about the characterization of convexity of Berezin range interested readers can follow the article \cite{CC_LAA_2022}. 
It should be mentioned that $\textbf{ber}(\cdot)$ and $\|\cdot\|_{ber}$ do not generally define a norm on $\mathbb{B}(\mathcal{H})$ but if the RKHS $\mathcal{H}$ has the ``Ber" property (i.e., for any two operators $A,B \in \mathbb{B}\left(\mathcal{H}\right)$,   $\widetilde{A}\left(
\lambda\right)=\widetilde{B}\left(\lambda\right)$ for all $\lambda \in \Omega$ implies $A=B$) then they define a norm on $\mathbb{B}(\mathcal{H}).$
The Berezin symbol of an operator provides important information about the operator and it has wide application in operator theory. It has been studied in details for Toeplitz and Hankel operators on Hardy and Bergman spaces. For further information about the Berezin symbol and its application, we refer the readers to see \cite{K2013, KAR_JFA_2006,KS_CVTA_2005,T_OM_2021}. Recall that the Hardy-Hilbert space of the unit disk  $\mathbb{D}= \{\lambda \in \mathbb{C} : |\lambda |<1 \}$ is denoted by ${H}^2(\mathbb{D}),$ and is defined as the Hilbert space of all square summable analytic functions on $\mathbb{D}$. It is well known that ${H}^2(\mathbb{D})$ is a RKHS and for $\lambda \in \mathbb{D}$ the corresponding reproducing kernel of ${H}^2(\mathbb{D})$ is given by $k_{\lambda}(z)=\sum_{n=0}^{\infty}\bar{\lambda}^n z^n$ (see \cite{Paulsen_book}).  
 Recently it is proved in \cite[Proposition 2.11]{BP_sen_CAOT} that the equality $\|A\|_{ber}=\textbf{ber}(A)$ holds if $A$ is positive. Observe that  the above equality may not be true for  selfadjoint operators. The Berezin number and Berezin norm inequalities have been studied by many mathematicians  \cite{BGKST_NFAO_2023,GA_CAOT_2021,GGO2016,MMM_JMAA_23,STC_RMJM_2022,sen_RMJM,YI_LAMA_2022,YG_NYJM_2017}.

Let $\left\{ \Omega_i : i=1,2,\ldots,n \right\}$ be a collection of nonempty sets and $\mathcal{H}_i=\mathcal{H}(\Omega_i)$ be a RKHS on $\Omega_i$ for each $i.$	Let us consider the direct sum $\mathcal{H}=\oplus^n_{i=1}\mathcal{H}_i.$ Then it is easy to observe that $\mathcal{H}$ is a RKHS on the nonempty set $\Omega_1\times \Omega_2\times \ldots \times \Omega_n.$ Every operator $A \in \mathbb{B}(\mathcal{H})$ has an $n \times n$ operator matrix representation $A=[A_{ij}]_{n \times n},$ where $A_{ij} \in \mathbb{B}(\mathcal{H}_j,\mathcal{H}_i).$ Here $\mathbb{B}(\mathcal{H}_j,\mathcal{H}_i)$ is the collection of all bounded linear operators from $\mathcal{H}_j$ to $\mathcal{H}_i.$ In recent years many mathematicians developed useful bounds of the numerical radius and operator norm of $n \times n$ operator matrices (see \cite{AK_LAA_2015, BK_LAMA_2009,Bhunia_arxiv_23,HD_IEOT_95}). Motivated by this idea Berezin number inequalities for operator matrices were also studied in \cite{Bak_CMJ_2018,SDR_ACTA_2021}.

In this paper, we obtain upper bounds for the Berezin number and Berezin norm of $n \times n$ operator matrices. As a special case for $2 \times 2$ operator matrices the bounds obtained here are  better than the  existing ones. We also develop several upper bounds for single and product operators. 

\section{main results}	

We begin this section with the following lemmas which will be used often.
\begin{lemma}\label{lm1}\cite[p. 20]{Sim_CUP_79}
	Let $A\in \mathbb{B}(\mathcal{H})$ be positive and let $x\in \mathcal{H}$ with $\|x\|=1.$ Then 
	\begin{eqnarray*}
		\langle Ax,x\rangle^r\leq \langle A^rx,x\rangle~~\,\, \text{for all}~~ r\geq 1.
	\end{eqnarray*}
\end{lemma}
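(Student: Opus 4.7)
The plan is to reduce the operator inequality to a scalar integral inequality via the spectral theorem, and then apply Jensen's inequality for the convex function $t \mapsto t^r$. Since $A \in \mathbb{B}(\mathcal{H})$ is positive, its spectrum $\sigma(A)$ is contained in $[0, \|A\|]$. By the spectral theorem there is a projection-valued measure $E$ on $\sigma(A)$ such that $A = \int_{\sigma(A)} \lambda \, dE(\lambda)$ and, more generally, $f(A) = \int_{\sigma(A)} f(\lambda) \, dE(\lambda)$ for any continuous $f$. In particular, $A^r = \int_{\sigma(A)} \lambda^r \, dE(\lambda)$ for every $r \geq 1$.

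Next, I would fix $x \in \mathcal{H}$ with $\|x\|=1$ and define the scalar Borel measure $\mu_x(B) := \langle E(B) x, x \rangle$ on $\sigma(A)$. Since each $E(B)$ is an orthogonal projection, $\mu_x$ is nonnegative, and $\mu_x(\sigma(A)) = \langle x, x \rangle = 1$, so $\mu_x$ is a probability measure. Under this correspondence, the two operator quantities become scalar integrals, namely $\langle Ax, x \rangle = \int_{\sigma(A)} \lambda \, d\mu_x(\lambda)$ and $\langle A^r x, x \rangle = \int_{\sigma(A)} \lambda^r \, d\mu_x(\lambda)$.

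Finally, for $r \geq 1$ the function $\varphi(t) = t^r$ is convex on $[0, \infty)$, so Jensen's inequality applied to $\mu_x$ yields $\left( \int \lambda \, d\mu_x \right)^{r} \leq \int \lambda^r \, d\mu_x$, which is exactly the claimed inequality $\langle Ax, x \rangle^r \leq \langle A^r x, x \rangle$.

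There is really no substantive obstacle here, since both the spectral theorem for bounded positive operators and Jensen's inequality for probability measures are standard tools. The only mild technical point is verifying that $\mu_x$ is genuinely a probability measure, which uses the unit-norm hypothesis in an essential way, and indicates why the statement is stated for $\|x\|=1$ rather than for arbitrary $x$. If one wished to avoid invoking the full spectral machinery, an alternative elementary route would be to first handle the dyadic case $\langle Ax, x \rangle^{2^{n}} \leq \langle A^{2^{n}} x, x \rangle$ by iterating the Cauchy--Schwarz estimate $|\langle A^{k} x, x \rangle|^{2} \leq \|A^{k} x\|^{2} = \langle A^{2k} x, x \rangle$, and then extend to all $r \geq 1$ using the Löwner--Heinz operator monotonicity of $t \mapsto t^{\alpha}$ for $\alpha \in (0,1]$; but this is substantially longer than the one-line Jensen argument.
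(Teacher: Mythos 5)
Your proof is correct. Note that the paper does not prove this lemma at all: it is imported verbatim with the citation to Simon's \emph{Trace Ideals and Their Applications} (p.~20), so there is no in-paper argument to compare against. Your derivation --- spectral theorem, the induced probability measure $\mu_x(B)=\langle E(B)x,x\rangle$ (where $\|x\|=1$ is exactly what makes it a probability measure), and Jensen's inequality for the convex function $t\mapsto t^r$ --- is the standard proof of this McCarthy-type inequality and is complete as written. The only loose end is in your aside: the dyadic iteration handles $r=2^n$, but the reduction of general $r\geq 1$ to that case requires $\langle A^{s}x,x\rangle\geq\langle Ax,x\rangle^{s}$ for $s\in(0,1]$, which is not literally L\"{o}wner--Heinz monotonicity but rather operator concavity of $t\mapsto t^{s}$ (or another application of Jensen); since that route is offered only as an alternative, it does not affect the validity of your main argument.
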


\begin{lemma}\label{lm8}\cite[pp. 75-76]{hal_book_82}
	Let $A\in \mathbb{B}(\mathcal{H})$ and let $x, y\in \mathcal{H}$. Then 
	\[|\langle Ax,y\rangle|^2\leq \langle |A|x,x\rangle \langle |A^*|y,y\rangle.\]
\end{lemma}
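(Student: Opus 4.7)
The plan is to prove the mixed Schwarz inequality by reducing it, via the polar decomposition of $A$, to an ordinary Cauchy--Schwarz estimate on an auxiliary inner product, and then to identify the resulting quadratic form on $y$ with $\langle |A^*|y,y\rangle$.

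First, I would invoke the polar decomposition $A=U|A|$, where $U\in\mathbb{B}(\mathcal{H})$ is the partial isometry with initial space $\overline{\mathrm{Ran}(|A|)}$ and final space $\overline{\mathrm{Ran}(A)}$. Since $|A|$ is positive, it admits a square root $|A|^{1/2}$, and I can split
\[
\langle Ax,y\rangle \;=\; \langle U|A|^{1/2}\,|A|^{1/2}x,\,y\rangle \;=\; \langle |A|^{1/2}x,\; |A|^{1/2}U^*y\rangle.
\]
Applying the standard Cauchy--Schwarz inequality to this inner product yields
\[
|\langle Ax,y\rangle|^2 \;\leq\; \big\||A|^{1/2}x\big\|^{2}\,\big\||A|^{1/2}U^*y\big\|^{2} \;=\; \langle |A|x,x\rangle\,\langle U|A|U^*y,y\rangle.
\]

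The second step is to identify $U|A|U^*$ with $|A^*|$. From $A=U|A|$ one gets $A^*=|A|U^*$, so $AA^*=U|A|\,|A|U^*=U|A|^{2}U^*=(U|A|U^*)^{2}$, where the last equality uses $U^*U|A|=|A|$ (the initial-space property of the polar decomposition, which holds because $|A|$ vanishes on $\ker U$). Since $U|A|U^*$ is a positive operator and its square equals $AA^*$, uniqueness of the positive square root gives $U|A|U^*=(AA^*)^{1/2}=|A^*|$. Substituting this into the displayed inequality finishes the proof.

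The only genuine obstacle is the identification $U|A|U^*=|A^*|$, which requires a careful use of the partial-isometry relation $U^*U|A|=|A|$; every other step is standard. Once that identity is in hand, the rest is a one-line application of Cauchy--Schwarz. I would therefore present the proof as two clean steps: factor the inner product through $|A|^{1/2}$, then invoke the positive square root uniqueness to dispose of $U$.
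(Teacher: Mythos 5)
Your argument is correct and is essentially the classical proof of the mixed Schwarz inequality: the paper does not prove this lemma but cites Halmos, whose solution likewise factors $\langle Ax,y\rangle=\langle |A|^{1/2}x,|A|^{1/2}U^*y\rangle$ via the polar decomposition, applies Cauchy--Schwarz, and identifies $U|A|U^*=|A^*|$ by uniqueness of the positive square root. The one point to state carefully is the identity $U^*U|A|=|A|$, which holds because $U^*U$ is the projection onto the initial space $\overline{\mathrm{Ran}\,|A|}$; you have that right.
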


\begin{lemma}\label{lm5}\cite[Th. 5]{Kitt_PRIMS_88}
	Let $A\in\mathbb{B}(\mathcal{H}).$ Let $f$ and $g$ be continuous functions on $[0,\infty)$, which satisfy the relation $f(\lambda) g(\lambda)=\lambda$ for all $\lambda\in[0,\infty).$ Then 
	\[|\langle Ax,y \rangle|\leq \|f(|A|)x\|\|g(|A^*|)y\|,\] for all $ x,y \in \mathcal{H}.$
\end{lemma}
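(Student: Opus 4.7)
The plan is to reduce the inequality to Cauchy--Schwarz via the polar decomposition of $A$. Write $A = U|A|$, where $U$ is a partial isometry with $\|U\| \leq 1$ and $\ker U = \ker A$. Replacing $f, g$ by $|f|, |g|$ changes neither side of the inequality, so I may assume $f, g \geq 0$; then $f(|A|), g(|A|), f(|A^*|), g(|A^*|)$ are self-adjoint via the continuous functional calculus, and the hypothesis $f(\lambda)g(\lambda) = \lambda$ yields the operator identity $|A| = g(|A|) f(|A|)$.

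The key ingredient is the intertwining relation $U h(|A|) = h(|A^*|) U$ for every continuous $h$ on $[0, \|A\|]$. To set it up, note that $U^*U$ is the projection onto $\overline{\operatorname{range}\,|A|}$, so $A^*U = |A|U^*U = |A|$; then $|A^*|^2 U = AA^*U = A|A| = U|A|^2$. Consequently $p(|A^*|^2) U = U p(|A|^2)$ for every polynomial $p$, and Stone--Weierstrass together with composition with $t \mapsto \sqrt{t}$ upgrades this to $h(|A^*|) U = U h(|A|)$ for every continuous $h$ on the relevant interval.

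Granting the intertwining, the computation is immediate. For any $x, y \in \mathcal{H}$,
\[\langle Ax, y\rangle = \langle U g(|A|) f(|A|) x, y\rangle = \langle g(|A^*|) U f(|A|) x, y\rangle = \langle U f(|A|) x, g(|A^*|) y\rangle,\]
where the last step uses that $g(|A^*|)$ is self-adjoint. Applying Cauchy--Schwarz and $\|U\| \leq 1$ gives
\[|\langle Ax, y\rangle| \leq \|U f(|A|) x\| \cdot \|g(|A^*|) y\| \leq \|f(|A|) x\| \cdot \|g(|A^*|) y\|,\]
which is the claim.

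The only nonroutine step is justifying the intertwining $Uh(|A|) = h(|A^*|)U$; everything else is a formal unrolling of the polar decomposition followed by Cauchy--Schwarz. I do not expect any further obstacle, since the whole argument rests on standard properties of the continuous functional calculus.
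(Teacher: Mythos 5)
The paper does not prove this lemma; it is quoted verbatim from Kittaneh \cite{Kitt_PRIMS_88} as a known tool. Your argument is correct and is essentially the original one: polar decomposition $A=U|A|$, the intertwining $h(|A^*|)U=Uh(|A|)$ obtained from $|A^*|^2U=U|A|^2$ via polynomial approximation, and then Cauchy--Schwarz together with $\|U\|\le 1$; the preliminary reduction to $f,g\ge 0$ (so that $g(|A^*|)$ is self-adjoint and $g(|A|)f(|A|)=|A|$) is handled correctly. No gap.
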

\begin{lemma}\label{lmi}\cite[p. 44]{HJ_CUP_91}
  If $A = [a_{ij}]$ is an $n \times n $ complex matrix such that $ a_{ij}\geq 0$ for all $i, j = 1, 2, . . . , n,$ then
  $w(A)=w(\frac{A+A^*}{2})=r(\frac{A+A^*}{2}),$ where r(·) denotes the spectral radius.
\end{lemma}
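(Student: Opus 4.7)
The plan is to split the chain into the easy Hermitian identity $w(B) = r(B)$ (where $B := \frac{A+A^*}{2}$) and the substantive identity $w(A) = w(B)$, which I would prove by squeezing from both sides using the entrywise nonnegativity of $A$. Since $B$ is Hermitian, the standard fact $w(B) = \|B\| = r(B)$ for Hermitian matrices immediately settles the second equality, so the whole problem reduces to establishing $w(A) = w(B)$.

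For the bound $w(A) \leq w(B)$, I would pick an arbitrary unit vector $x \in \mathbb{C}^n$ and set $|x| := (|x_1|, \dots, |x_n|)^T$, which is a nonnegative unit vector. Using $a_{ij} \geq 0$ and the triangle inequality,
\[ |\langle Ax, x\rangle| = \Big|\sum_{i,j} a_{ij} x_j \overline{x_i}\Big| \leq \sum_{i,j} a_{ij} |x_j| |x_i| = \langle A|x|, |x|\rangle. \]
Because $A$ and $|x|$ are both real, $\langle A|x|, |x|\rangle$ is a real number and hence coincides with its real part $\langle B|x|, |x|\rangle$, which is bounded by $w(B)$. Taking the supremum over $x$ yields $w(A) \leq w(B)$.

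For the reverse inequality $w(A) \geq w(B) = r(B)$, I would invoke the Perron--Frobenius theorem for the entrywise-nonnegative matrix $B$, which produces a nonnegative unit eigenvector $v$ with $Bv = r(B)\, v$. Since the entries of $A$ and of $v$ are nonnegative, $\langle Av, v\rangle = \sum_{i,j} a_{ij} v_j v_i$ is a nonnegative real number, and the same symmetrization observation gives $\langle Av, v\rangle = \langle Bv, v\rangle = r(B)$. Therefore $w(A) \geq |\langle Av, v\rangle| = r(B)$, closing the chain of equalities. The only substantive input is Perron--Frobenius, invoked to produce the extremal nonnegative eigenvector at which $w(A)$ is actually attained; everything else is a routine combination of the triangle inequality and the Hermitian spectral-radius identity.
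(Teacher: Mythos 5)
Your argument is correct. Note first that the paper itself offers no proof of this lemma: it is quoted verbatim from Horn and Johnson \cite[p. 44]{HJ_CUP_91} and used as a black box, so there is no internal argument to compare yours against. Your three steps all check out: the Hermitian identity $w(B)=\|B\|=r(B)$ for $B=\tfrac{A+A^*}{2}$ is standard; the inequality $|\langle Ax,x\rangle|\leq\langle A|x|,|x|\rangle=\langle B|x|,|x|\rangle\leq w(B)$ correctly uses the entrywise nonnegativity of $A$ and the fact that a real inner product value coincides with its symmetrized (real) part; and the weak form of Perron--Frobenius does supply a nonnegative unit eigenvector $v$ of the entrywise nonnegative Hermitian matrix $B$ for the eigenvalue $r(B)$, at which $\langle Av,v\rangle=\langle Bv,v\rangle=r(B)$ is attained. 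One small simplification worth noting: Perron--Frobenius is not actually needed for the reverse inequality. Since $B$ is itself entrywise nonnegative, the same triangle-inequality trick gives $|\langle Bx,x\rangle|\leq\langle B|x|,|x|\rangle=\langle A|x|,|x|\rangle\leq w(A)$ for every unit vector $x$, whence $w(B)\leq w(A)$ directly; this keeps the whole proof at the level of elementary manipulations of the quadratic form.
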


\begin{lemma}\label{lm6}\cite[Cor. 2.5]{KDM_Filo_12}
	Let $x_{1},x_{2},e\in \mathcal{H}$ with $\|e\|=1$ and $\alpha \in \mathbb{C}\backslash\{0\}.$ Then
	\[|\langle x_{1},e\rangle \langle e,x_{2} \rangle|\leq \frac{1}{|\alpha|}\big(\max\{1,|\alpha -1|\}\|x_{1}\|\|x_{2}\|+|\langle x_{1},x_{2} \rangle|\big).\]
	
	In particular, for $\alpha=2,$  the above inequality becomes the Buzano's inequality  \cite{buza_74} 
	\[|\langle x_{1},e\rangle \langle e,x_{2}\rangle|\leq \frac{1}{2}(\|x_1\|\|x_2\|+|\langle x_1,x_2\rangle|).\]
\end{lemma}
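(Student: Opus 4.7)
The plan is to derive the inequality by first isolating the product $\alpha\langle x_1,e\rangle\langle e,x_2\rangle$ via an algebraic rearrangement, then applying the triangle inequality, Cauchy-Schwarz, and finally Bessel's inequality with a short case analysis on $|\alpha-1|$.

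Concretely, I would start from the identity
\[
\alpha\langle x_1,e\rangle\langle e,x_2\rangle \;=\; \langle x_1,x_2\rangle \;-\; \langle x_1 - \alpha\langle x_1,e\rangle e,\, x_2\rangle,
\]
which is immediate by expanding the second inner product on the right. Taking moduli and invoking the triangle inequality together with Cauchy-Schwarz on the second term yields
\[
|\alpha|\,|\langle x_1,e\rangle\langle e,x_2\rangle| \;\leq\; |\langle x_1,x_2\rangle| \;+\; \bigl\|x_1 - \alpha\langle x_1,e\rangle e\bigr\|\,\|x_2\|.
\]
After dividing by $|\alpha|$, the whole problem reduces to showing that $\|x_1 - \alpha\langle x_1,e\rangle e\| \leq \max\{1,|\alpha-1|\}\,\|x_1\|$.

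For this last bound I would compute
\[
\bigl\|x_1 - \alpha\langle x_1,e\rangle e\bigr\|^2 \;=\; \|x_1\|^2 + (|\alpha|^2 - 2\operatorname{Re}\alpha)\,|\langle x_1,e\rangle|^2 \;=\; \|x_1\|^2 + (|\alpha-1|^2 - 1)\,|\langle x_1,e\rangle|^2,
\]
using $\|e\|=1$ and the identity $|\alpha|^2 - 2\operatorname{Re}\alpha = |\alpha-1|^2 - 1$. Now Bessel's inequality gives $|\langle x_1,e\rangle|^2 \leq \|x_1\|^2$. Splitting into the two cases $|\alpha-1|\leq 1$ (where the coefficient $|\alpha-1|^2 - 1$ is nonpositive, so the squared norm is bounded by $\|x_1\|^2$) and $|\alpha-1|>1$ (where replacing $|\langle x_1,e\rangle|^2$ by $\|x_1\|^2$ yields the bound $|\alpha-1|^2\|x_1\|^2$), both cases collapse to $\max\{1,|\alpha-1|\}^2\|x_1\|^2$. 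Combining this with the earlier chain delivers the desired inequality.

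The main obstacle is the observation $|\alpha|^2 - 2\operatorname{Re}\alpha = |\alpha-1|^2 - 1$ and the subsequent two-case split, which is what produces the precise constant $\max\{1,|\alpha-1|\}$; everything else is routine. The Buzano specialization $\alpha=2$ then follows from $\max\{1,|2-1|\}=1$ and $1/|\alpha|=1/2$, recovering the classical form stated in the ``In particular'' clause.
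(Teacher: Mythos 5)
The paper does not prove this lemma at all --- it is imported verbatim from the cited reference \cite{KDM_Filo_12} (Cor.\ 2.5 there), so there is no internal proof to compare against. Your argument is correct and self-contained: the identity $\alpha\langle x_1,e\rangle\langle e,x_2\rangle=\langle x_1,x_2\rangle-\langle x_1-\alpha\langle x_1,e\rangle e,\,x_2\rangle$ checks out by direct expansion, the norm computation $\|x_1-\alpha\langle x_1,e\rangle e\|^2=\|x_1\|^2+(|\alpha-1|^2-1)|\langle x_1,e\rangle|^2$ is right (using $\|e\|=1$ and $|\alpha|^2-2\operatorname{Re}\alpha=|\alpha-1|^2-1$), and the two-case split via $|\langle x_1,e\rangle|\le\|x_1\|$ cleanly produces the factor $\max\{1,|\alpha-1|\}$; the $\alpha=2$ specialization to Buzano's inequality is immediate. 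The one cosmetic remark is that the bound $|\langle x_1,e\rangle|^2\le\|x_1\|^2$ is just Cauchy--Schwarz (or Bessel for the singleton orthonormal set $\{e\}$), so the name is harmless. Your route differs from the source: Khosravi, Drnovsek and Moslehian obtain the inequality as a corollary of an operator-norm estimate for commutators (hence the title of their paper), deducing the scalar statement from a more general operator identity, whereas your derivation is entirely elementary and works directly at the level of three vectors. The commutator approach buys generality (operator versions of the inequality come for free); your approach buys transparency, and in particular makes visible exactly where the constant $\max\{1,|\alpha-1|\}$ comes from, which is the only nontrivial point of the lemma.
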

\begin{lemma}\label{lm7}\cite[p. 1001]{Bak_CMJ_2018}
	Let $A\in \mathbb{B}(\mathcal{H}_{1}),B\in \mathbb{B}(\mathcal{H}_{2},\mathcal{H}_{1}),C\in\mathbb{B}(\mathcal{H}_{1},\mathcal{H}_{2})$ and $D\in\mathbb{B}(\mathcal{H}_{2}).$
	Then the following inequalities hold:
\begin{eqnarray*}
&(i)&~~\textbf{ber}\begin{pmatrix}
			\begin{bmatrix}
				A & 0\\
				0 & D
			\end{bmatrix} 
		\end{pmatrix}\leq \max\Big\{\textbf{ber}(A),\textbf{ber}(D)\Big\},\\
&(ii)&~~\textbf{ber}\begin{pmatrix}
			\begin{bmatrix}
				0 & B\\
				C & 0
			\end{bmatrix} 
		\end{pmatrix}\leq \frac{1}{2}\big(\|B\|+\|C\|\big).
\end{eqnarray*}

In particular, if $\mathcal{H}_1=\mathcal{H}_2$ then
			$\textbf{ber}\begin{pmatrix}
			\begin{bmatrix}
				0 & B\\
				B & 0
			\end{bmatrix} 
		\end{pmatrix}\leq \|B\|.$
\end{lemma}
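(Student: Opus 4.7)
\emph{Proof proposal.} The plan is to work with the normalized reproducing kernel of the direct-sum RKHS $\mathcal{H}=\mathcal{H}_{1}\oplus\mathcal{H}_{2}$ on $\Omega_{1}\times\Omega_{2}$. At a point $\lambda=(\lambda_{1},\lambda_{2})$, this kernel can be written as $\hat{k}_{\lambda}=\alpha\,\hat{k}_{\lambda_{1}}^{(1)}\oplus\beta\,\hat{k}_{\lambda_{2}}^{(2)}$, where $\hat{k}_{\lambda_{i}}^{(i)}$ denotes the normalized reproducing kernel of $\mathcal{H}_{i}$ at $\lambda_{i}$ and the coefficients $\alpha,\beta\geq 0$ (determined by $\|k_{\lambda_{i}}\|$) satisfy $\alpha^{2}+\beta^{2}=1$. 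Both bounds then reduce to short direct computations.

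For (i), with $T=\begin{bmatrix}A & 0\\ 0 & D\end{bmatrix}$, a routine evaluation yields
\[\widetilde{T}(\lambda)=\alpha^{2}\,\widetilde{A}(\lambda_{1})+\beta^{2}\,\widetilde{D}(\lambda_{2}).\]
Applying the triangle inequality and using that a convex combination of nonnegative reals does not exceed their maximum,
\[|\widetilde{T}(\lambda)|\leq\alpha^{2}\,\textbf{ber}(A)+\beta^{2}\,\textbf{ber}(D)\leq\max\{\textbf{ber}(A),\textbf{ber}(D)\},\]
and taking the supremum over all $\lambda$ produces (i).

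For (ii), with $T=\begin{bmatrix}0 & B\\ C & 0\end{bmatrix}$, the same computation gives
\[\widetilde{T}(\lambda)=\alpha\beta\,\bigl(\langle B\hat{k}_{\lambda_{2}}^{(2)},\hat{k}_{\lambda_{1}}^{(1)}\rangle+\langle C\hat{k}_{\lambda_{1}}^{(1)},\hat{k}_{\lambda_{2}}^{(2)}\rangle\bigr).\]
Combining the triangle inequality, the operator-norm bound $|\langle Su,v\rangle|\leq\|S\|$ on unit vectors $u,v$, and the AM--GM inequality $\alpha\beta\leq\tfrac{1}{2}(\alpha^{2}+\beta^{2})=\tfrac{1}{2}$ gives $|\widetilde{T}(\lambda)|\leq\tfrac{1}{2}(\|B\|+\|C\|)$, which establishes (ii). The special case with $\mathcal{H}_{1}=\mathcal{H}_{2}$ and $C=B$ then follows immediately.

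The one nontrivial step is justifying the claimed form of the normalized reproducing kernels of the direct sum; this is a structural fact about how the RKHS structure on $\mathcal{H}_{1}\oplus\mathcal{H}_{2}$ is built from those of the summands (compare \cite{Bak_CMJ_2018}), after which the estimates above are entirely elementary.
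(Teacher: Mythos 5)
Your proof is correct: the decomposition $\hat{k}_{(\lambda_1,\lambda_2)}=\alpha\,\hat{k}_{\lambda_1}\oplus\beta\,\hat{k}_{\lambda_2}$ with $\alpha^2+\beta^2=1$ is exactly the convention the paper uses for normalized kernels of direct sums (compare the proof of Theorem \ref{th4}, where the vector of norms $\|k_{\lambda_i}\|$ is a unit vector), and the convexity and AM--GM steps are sound. The paper itself gives no proof of Lemma \ref{lm7}, quoting it from \cite{Bak_CMJ_2018}, and your direct computation is essentially the standard argument behind that citation.
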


Now, we are in a position to prove our first result which provides an upper bound of the Berezin number of $n \times n$ operator matrices.

\begin{theorem}\label{th4}
	Let $A=[A_{ij}]$ be an $n\times n$ operator matrix, where $A_{ij}\in\mathbb{B}(\mathcal{H})$ for all $i,j=1,2,...n.$ If $f,g:[0,\infty)\mapsto [0,\infty)$ are continuous functions satisfying $f(\lambda) g(\lambda)=\lambda,$ for all $\lambda\in[0,\infty),$ then
	\begin{eqnarray*}
	\textbf {ber}\begin{pmatrix}
		\begin{bmatrix}
		    A_{11} &A_{12}& \dots& A_{1n} \\
			A_{21} &A_{22}& \dots& A_{2n} \\
			\vdots &\vdots& \ddots&\vdots \\
			A_{n1} &A_{n2}& \dots &A_{nn}
	\end{bmatrix}\end{pmatrix}\leq
	w\begin{pmatrix}
		\begin{bmatrix}
			\textbf{ber}(A_{11})&a_{12}& \dots& a_{1n} \\
			0 &\textbf{ber}(A_{22})& \dots& a_{2n} \\
			\vdots &\vdots& \ddots&\vdots \\
			0 &0& \dots &\textbf{ber}(A_{nn})
	\end{bmatrix}\end{pmatrix}
\end{eqnarray*}
	where $a_{ij}=\big\|f^2(|A_{ij}|)+g^2(|A^*_{ji}|)\big \|^{\frac12}_{ber} \,\big\|f^2(|A_{ji}|)+g^2(|A^*_{ij}|)\big\|^{\frac12}_{ber} .$
\end{theorem}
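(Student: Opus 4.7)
The plan is to fix $\mu=(\mu_1,\dots,\mu_n)\in\Omega\times\cdots\times\Omega$ and decompose the normalized reproducing kernel of the direct sum $\bigoplus_{i=1}^{n}\mathcal{H}$ at $\mu$ in the form $\hat{k}_\mu=(\alpha_1\hat{k}_{\mu_1},\dots,\alpha_n\hat{k}_{\mu_n})$, where $\alpha_i\ge 0$ and $\sum_{i=1}^{n}\alpha_i^{2}=1$ (this follows from the standard direct-sum convention for reproducing kernels used throughout the paper). Expanding the Berezin symbol then gives
\[
\widetilde{A}(\mu)=\sum_{i=1}^{n}\alpha_i^{2}\,\widetilde{A_{ii}}(\mu_i)+\sum_{i\ne j}\alpha_i\alpha_j\,\langle A_{ij}\hat{k}_{\mu_j},\hat{k}_{\mu_i}\rangle,
\]
so the diagonal part is immediately controlled by $\sum_i\alpha_i^{2}\,\textbf{ber}(A_{ii})$.

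For the off-diagonal part I would pair $(i,j)$ with $(j,i)$ for each $i<j$ and bound $|\langle A_{ij}\hat{k}_{\mu_j},\hat{k}_{\mu_i}\rangle|+|\langle A_{ji}\hat{k}_{\mu_i},\hat{k}_{\mu_j}\rangle|$. Apply Lemma~\ref{lm5} to each inner product, then use the elementary inequality $ab+cd\le\sqrt{a^{2}+c^{2}}\sqrt{b^{2}+d^{2}}$, grouping the two factors carrying $\hat{k}_{\mu_j}$ under one square root and the two carrying $\hat{k}_{\mu_i}$ under the other. This should leave
\[
\sqrt{\langle(f^{2}(|A_{ij}|)+g^{2}(|A_{ji}^{*}|))\hat{k}_{\mu_j},\hat{k}_{\mu_j}\rangle}\cdot\sqrt{\langle(f^{2}(|A_{ji}|)+g^{2}(|A_{ij}^{*}|))\hat{k}_{\mu_i},\hat{k}_{\mu_i}\rangle}.
\]
Both operators in angle brackets are positive, and the identity $\|P\|_{ber}=\textbf{ber}(P)$ for positive $P$ (cited in the Introduction from \cite{BP_sen_CAOT}) bounds each square root by the corresponding $\|\cdot\|_{ber}^{1/2}$. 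The product is precisely $a_{ij}$, so the paired contribution is at most $\alpha_i\alpha_j\,a_{ij}$.

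Summing the diagonal and off-diagonal estimates yields $|\widetilde{A}(\mu)|\le\langle T\alpha,\alpha\rangle$, where $\alpha=(\alpha_1,\dots,\alpha_n)^{T}$ and $T$ is the real symmetric matrix with $T_{ii}=\textbf{ber}(A_{ii})$ and $T_{ij}=T_{ji}=a_{ij}/2$ for $i<j$. Since $\|\alpha\|=1$, $\langle T\alpha,\alpha\rangle\le w(T)$. Now the upper-triangular matrix $N$ on the right-hand side of the statement has non-negative entries and satisfies $T=(N+N^{*})/2$, so Lemma~\ref{lmi} gives $w(N)=w(T)$; taking the supremum over $\mu$ completes the proof. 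The genuinely delicate step is the pairing in the Cauchy--Schwarz bound: the two factors involving $\hat{k}_{\mu_j}$ must be bundled together (and similarly for $\hat{k}_{\mu_i}$), and it is exactly this grouping that forces the asymmetric cross-pattern $|A_{ij}|,|A_{ji}^{*}|$ versus $|A_{ji}|,|A_{ij}^{*}|$ in the definition of $a_{ij}$.
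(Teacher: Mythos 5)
Your proposal is correct and follows essentially the same route as the paper: the same decomposition of the normalized kernel of the direct sum, the same pairing of $(i,j)$ with $(j,i)$, Lemma~\ref{lm5} followed by the Cauchy--Schwarz grouping that produces the cross-pattern $f^{2}(|A_{ij}|)+g^{2}(|A_{ji}^{*}|)$, and the final reduction to $\langle \hat{A}y,y\rangle\le w(\hat{A})$ for the unit vector of component norms. The only cosmetic difference is that you pass through the symmetrization $T=(N+N^{*})/2$ and Lemma~\ref{lmi} at the end, where the paper bounds $\langle \hat{A}y,y\rangle$ by $w(\hat{A})$ directly; both are valid.
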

\begin{proof}
	 For $(\lambda_{1}, . . . , \lambda_{n})\in \Omega^n =\Omega\times . . . \times  \Omega$ ($n$-copies), let $\hat{k}_{(\lambda_{1}, . . . , \lambda_{n})}$ be the corresponding normalized reproducing kernel of $\mathcal{H}\oplus \ldots \oplus \mathcal{H}$ ($n$-copies). Then
	\begin{eqnarray*}
		&&|\langle A\hat{k}_{(\lambda_{1}, . . . , \lambda_{n})}, \hat{k}_{(\lambda_{1}, . . . , \lambda_{n})}\rangle|\\
		&=& |\sum_{i,j=1}^{n}\langle A_{ij}k_{\lambda_{j}},k_{\lambda_{i}}\rangle |\\
		&\leq & \sum_{i,j=1}^{n}|\langle A_{ij}k_{\lambda_{j}},k_{\lambda_{i}}\rangle|\\
		&=& \sum_{i=1}^{n}|\langle A_{ii}k_{\lambda_{i}},k_{\lambda_{i}}\rangle|+\sum_{i,j=1 \atop i\neq j}^{n}|\langle A_{ij}k_{\lambda_{j}},k_{\lambda_{i}}\rangle|\\
		&=&\sum_{i=1}^{n}|\langle A_{ii}k_{\lambda_{i}},k_{\lambda_{i}}\rangle|+\sum_{i,j=1 \atop i<j}^{n}\Big(|\langle A_{ij}k_{\lambda_{j}},k_{\lambda_{i}}\rangle|+|\langle A_{ji}k_{\lambda_{i}},k_{\lambda_{j}}\rangle|\Big)\\
		&\leq & \sum_{i=1}^{n}|\langle A_{ii}k_{\lambda_{i}},k_{\lambda_{i}}\rangle|\\
		&&+\sum_{i,j=1 \atop i<j}^{n}\Big(\|f(|A_{ij}|)k_{\lambda_{j}}\|\|g(|A^*_{ij}|)k_{\lambda_{i}}\|+\|f(|A_{ji}|)k_{\lambda_{i}}\|\|g(|A^*_{ji}|)k_{\lambda_{j}}\|\Big)\Big(\mbox{by Lemma \ref{lm5}}\Big)\\
		&\leq &\sum_{i=1}^{n}|\langle A_{ii}k_{\lambda_{i}},k_{\lambda_{i}}\rangle|\\
		&&+\sum_{i,j=1 \atop i<j}^{n}\Big(\|f(|A_{ij}|)k_{\lambda_{j}}\|^2+\|g(|A^*_{ji}|)k_{\lambda_{j}}\|^2\Big)^{\frac{1}{2}} \Big(\|f(|A_{ji}|)k_{\lambda_{i}}\|^2+\|g(|A^*_{ij}|)k_{\lambda_{i}}\|^2\Big)^{\frac{1}{2}}\\
		&=& \sum_{i=1}^{n}|\langle A_{ii}k_{\lambda_{i}},k_{\lambda_{i}}\rangle|\\
		&&+\sum_{i,j=1 \atop i<j}^{n}\langle (f^2(|A_{ij}|)+g^2(|A^*_{ji}|))k_{\lambda_{j}},k_{\lambda_{j}}\rangle ^{\frac{1}{2}}
		\langle (f^2(|A_{ji}|)+g^2(|A^*_{ij}|))k_{\lambda_{i}},k_{\lambda_{i}}\rangle ^{\frac{1}{2}}\\
		&\leq& \sum_{i=1}^{n}\textbf{ber}(A_{ii})\|k_{\lambda_{i}}\|^2\\
		&&+\sum_{i,j=1 \atop i<j}^{n} \big\|f^2(|A_{ij}|)+g^2(|A^*_{ji}|)\big \|^{\frac12}_{ber} \big\|f^2(|A_{ji}|)+g^2(|A^*_{ij}|)\big\|^{\frac12}_{ber}  \|k_{\lambda_{i}}\| \|k_{\lambda_{j}}\|\\
		&=& \langle \hat{A}y,y \rangle, 		
	\end{eqnarray*}
where $y=\begin{bmatrix}
	\|k_{\lambda_{1}}\|\\
	\|k_{\lambda_{2}}\|\\
	\vdots\\
	\|k_{\lambda_{n}}\|
	\end{bmatrix} \in \mathbb{C}^n$ 
is an unit vector and $\hat{A}=[a_{ij}]$ is an $n\times n$ complex matrix, with
\[a_{ij}=\begin{cases}
	\textbf{ber}(A_{ii}) & \text{when } i=j,\\
	\big\|f^2(|A_{ij}|)+g^2(|A^*_{ji}|)\big \|^{\frac12}_{ber} \, \big\|f^2(|A_{ji}|)+g^2(|A^*_{ij}|)\big\|^{\frac12}_{ber}  & \text{when } i<j,\\
	0 & \text{otherwise}.
\end{cases}\]
Since $\|y\|=1,$ so \[|\langle A\hat{k}_{(\lambda_{1}, . . . , \lambda_{n})}, \hat{k}_{(\lambda_{1}, . . . , \lambda_{n})}\rangle|\leq w(\hat{A}).\]
Taking the supremum over all $(\lambda_{1}, . . . , \lambda_{n})\in \Omega^n$, we get
$\textbf{ber}(A)\leq w(\hat{A}).$
\end{proof}
By considering $f(\lambda)=\lambda ^t$ and $g(\lambda)=\lambda^{1-t} (0\leq t \leq 1)$ in Theorem \ref{th4}, we obtain the following corollary.
\begin{cor}\label{co1}
	If $ A_{ij}\in\mathbb{B}(\mathcal{H})$ for all $i,j=1,2,...n,$ then
	\[\textbf{ber}\begin{pmatrix}
	\begin{bmatrix}
		A_{11} &A_{12}& \dots& A_{1n} \\
		A_{21} &A_{22}& \dots& A_{2n} \\
		\vdots &\vdots& \ddots&\vdots \\
		A_{n1} &A_{n2}& \dots &A_{nn}
	\end{bmatrix}\end{pmatrix}\leq
 w\begin{pmatrix}
 	\begin{bmatrix}
 		\textbf{ber}(A_{11})&a_{12}& \dots& a_{1n} \\
    	0 &\textbf{ber}(A_{22})& \dots& a_{2n} \\
 		\vdots &\vdots& \ddots&\vdots \\
 		0 &0& \dots &\textbf{ber}(A_{nn})
 \end{bmatrix}\end{pmatrix}\]
where $a_{ij}=\||A_{ij}|^{2t}+|A^*_{ji}|^{2(1-t)}\|^{\frac{1}{2}}_{ber}  \, \||A_{ji}|^{2t}+|A^*_{ij}|^{2(1-t)}\|^{\frac{1}{2}}_{ber}$ and $ t\in [0,1].$
\end{cor}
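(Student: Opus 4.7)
The plan is to derive Corollary \ref{co1} as an immediate specialization of Theorem \ref{th4} for the particular pair of continuous functions $f(\lambda)=\lambda^{t}$ and $g(\lambda)=\lambda^{1-t}$ on $[0,\infty)$, where $t\in[0,1]$. No independent argument is needed beyond the substitution, so the task reduces to (i) checking that this pair is admissible in Theorem \ref{th4} and (ii) simplifying the resulting coefficients $a_{ij}$ into the form stated in the corollary.

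First I would verify the hypotheses. For $t\in[0,1]$ both maps $\lambda\mapsto\lambda^{t}$ and $\lambda\mapsto\lambda^{1-t}$ are continuous on $[0,\infty)$ with nonnegative values, and they satisfy the multiplicative identity $f(\lambda)g(\lambda)=\lambda^{t}\cdot\lambda^{1-t}=\lambda$ for every $\lambda\in[0,\infty)$. Thus Theorem \ref{th4} applies and furnishes an upper bound for $\textbf{ber}(A)$ by $w(\hat{A})$ with off-diagonal entries
\[
a_{ij}=\bigl\|f^{2}(|A_{ij}|)+g^{2}(|A^{*}_{ji}|)\bigr\|^{\frac12}_{ber}\,\bigl\|f^{2}(|A_{ji}|)+g^{2}(|A^{*}_{ij}|)\bigr\|^{\frac12}_{ber}\quad(i<j).
\]

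Next I would use the continuous functional calculus applied to the positive operators $|A_{ij}|$, $|A^{*}_{ji}|$, $|A_{ji}|$, $|A^{*}_{ij}|$ to rewrite $f^{2}(|A_{ij}|)=|A_{ij}|^{2t}$ and $g^{2}(|A^{*}_{ji}|)=|A^{*}_{ji}|^{2(1-t)}$, and symmetrically for the swapped indices. Substituting these identities into the expression above transforms $a_{ij}$ into the quantity $\||A_{ij}|^{2t}+|A^{*}_{ji}|^{2(1-t)}\|_{ber}^{\frac12}\,\||A_{ji}|^{2t}+|A^{*}_{ij}|^{2(1-t)}\|_{ber}^{\frac12}$ that appears in Corollary \ref{co1}. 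The desired inequality then follows directly from Theorem \ref{th4}.

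I anticipate essentially no obstacle; the result is a routine specialization. The only subtlety worth flagging is that the functional calculus is being applied to positive operators, which is why $f,g$ must map into $[0,\infty)$; this is precisely ensured by the restriction $t\in[0,1]$, so the admissibility condition on the pair $(f,g)$ is respected throughout the permitted range of $t$.
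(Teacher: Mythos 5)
Your proposal is correct and is exactly the paper's proof: the authors obtain Corollary \ref{co1} precisely by substituting $f(\lambda)=\lambda^{t}$ and $g(\lambda)=\lambda^{1-t}$ into Theorem \ref{th4}, and your verification of the admissibility of this pair and the functional-calculus simplification $f^{2}(|A_{ij}|)=|A_{ij}|^{2t}$, $g^{2}(|A^{*}_{ji}|)=|A^{*}_{ji}|^{2(1-t)}$ fills in the routine details the paper leaves implicit.
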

The following corollary follows from Corollary \ref{co1} by considering $t=\frac{1}{2}.$
\begin{cor}\label{co2}
		If $ A_{ij}\in\mathbb{B}(\mathcal{H})$ for all $i,j=1,2,...n,$ then
		\[\textbf{ber}\begin{pmatrix}
			\begin{bmatrix}
				A_{11} &A_{12}& \dots& A_{1n} \\
				A_{21} &A_{22}& \dots& A_{2n} \\
				\vdots &\vdots& \ddots&\vdots \\
				A_{n1} &A_{n2}& \dots &A_{nn}
		\end{bmatrix}\end{pmatrix}\leq
		w\begin{pmatrix}
			\begin{bmatrix}
				\textbf{ber}(A_{11})&a_{12}& \dots& a_{1n} \\
				0 &\textbf{ber}(A_{22})& \dots& a_{2n} \\
				\vdots &\vdots& \ddots&\vdots \\
				0 &0& \dots &\textbf{ber}(A_{nn})
		\end{bmatrix}\end{pmatrix}\]
	where $a_{ij}=\big\||A_{ij}|+|A^*_{ji}|\big\|^{\frac{1}{2}}_{ber}  \, \big\||A_{ji}|+|A^*_{ij}|\big\|^{\frac{1}{2}}_{ber}.$
\end{cor}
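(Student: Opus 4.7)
The plan is to observe that Corollary \ref{co2} is simply the specialization $t = 1/2$ of Corollary \ref{co1}, so no new argument is needed beyond verifying that the substitution gives exactly the claimed formula for $a_{ij}$.

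First I would invoke Corollary \ref{co1}, which already establishes, for every $t \in [0,1]$, the upper bound
\[
\textbf{ber}([A_{ij}]) \;\leq\; w(\hat{A}_t),
\]
where $\hat{A}_t$ is the upper-triangular $n\times n$ complex matrix whose diagonal entries are $\textbf{ber}(A_{ii})$ and whose strictly upper-triangular entries are
\[
a_{ij}(t) \;=\; \bigl\||A_{ij}|^{2t} + |A_{ji}^{*}|^{2(1-t)}\bigr\|_{ber}^{1/2}\,\bigl\||A_{ji}|^{2t} + |A_{ij}^{*}|^{2(1-t)}\bigr\|_{ber}^{1/2}.
\]

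Next I would set $t = 1/2$. Then $2t = 1$ and $2(1-t) = 1$, so the operator powers collapse: $|A_{ij}|^{2t} = |A_{ij}|$, $|A_{ji}^{*}|^{2(1-t)} = |A_{ji}^{*}|$, and similarly for the other pair. Hence
\[
a_{ij}(1/2) \;=\; \bigl\||A_{ij}| + |A_{ji}^{*}|\bigr\|_{ber}^{1/2}\,\bigl\||A_{ji}| + |A_{ij}^{*}|\bigr\|_{ber}^{1/2},
\]
which matches exactly the $a_{ij}$ in the statement of Corollary \ref{co2}. The diagonal entries $\textbf{ber}(A_{ii})$ are unaffected by the choice of $t$.

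There is no real obstacle here since the work is entirely absorbed into Corollary \ref{co1}; the only thing to check is that the exponents simplify correctly when $t = 1/2$, which they do. The conclusion follows immediately by quoting Corollary \ref{co1} with this choice of $t$.
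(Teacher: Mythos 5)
Your proposal is correct and is exactly the paper's argument: the paper also obtains Corollary \ref{co2} by setting $t=\frac{1}{2}$ in Corollary \ref{co1}, so that the exponents $2t$ and $2(1-t)$ both become $1$ and the entries $a_{ij}$ reduce to the stated form. Nothing further is needed.
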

Considering $n=2$ in Theorem \ref{th4}, we obtain the following upper bound of Berezin number for $\lowercase{2}\times \lowercase{2}$ operator matrices.
\begin{cor}\label{co5}
	Let  $A=\begin{bmatrix}
		A_{11} & 	A_{12}\\
		A_{21} & 	A_{22}
	\end{bmatrix}\in \mathbb{B}(\mathcal{H} \oplus \mathcal{H}).$ Then 
\begin{eqnarray*}
	\textbf{ber}(A)&\leq& \frac{1}{2} \Bigg(\textbf{ber}(A_{11})+\textbf{ber}(A_{22})\\
	&&+\sqrt{ \big(\textbf{ber}(A_{11})-\textbf{ber}(A_{22})\big)^2+\big\||A_{12}|+|A^*_{21}|\big\|_{ber} \big\||A_{21}|+|A^*_{12}|\big\|_{ber}}\Bigg).
\end{eqnarray*}
\end{cor}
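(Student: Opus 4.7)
The plan is to specialize Theorem \ref{th4} to the case $n=2$ with the symmetric choice $f(\lambda)=g(\lambda)=\sqrt{\lambda}$, which matches the exponent $\tfrac12$ appearing in the statement, and then explicitly evaluate the numerical radius of the resulting $2\times 2$ scalar matrix. Equivalently, I would invoke Corollary \ref{co2} with $n=2$ directly, since that corollary is exactly the $t=\tfrac12$ case of Corollary \ref{co1}.

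Applying that corollary produces the bound
\[
\textbf{ber}(A)\;\leq\; w\!\left(\begin{bmatrix} \textbf{ber}(A_{11}) & a_{12} \\ 0 & \textbf{ber}(A_{22}) \end{bmatrix}\right),
\]
where $a_{12}=\big\||A_{12}|+|A^*_{21}|\big\|_{ber}^{1/2}\big\||A_{21}|+|A^*_{12}|\big\|_{ber}^{1/2}$. Note that all three nonzero entries of this scalar matrix are nonnegative real numbers, so Lemma \ref{lmi} applies: its numerical radius equals the spectral radius of the symmetric matrix
\[
\frac{1}{2}\left(\begin{bmatrix} \textbf{ber}(A_{11}) & a_{12} \\ 0 & \textbf{ber}(A_{22}) \end{bmatrix}+\begin{bmatrix} \textbf{ber}(A_{11}) & 0 \\ a_{12} & \textbf{ber}(A_{22}) \end{bmatrix}\right)=\begin{bmatrix} \textbf{ber}(A_{11}) & a_{12}/2 \\ a_{12}/2 & \textbf{ber}(A_{22}) \end{bmatrix}.
\]

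The last step is a direct computation: the larger eigenvalue of this symmetric $2\times 2$ matrix is
\[
\frac{\textbf{ber}(A_{11})+\textbf{ber}(A_{22})}{2}+\sqrt{\left(\frac{\textbf{ber}(A_{11})-\textbf{ber}(A_{22})}{2}\right)^{2}+\frac{a_{12}^{2}}{4}},
\]
and using $a_{12}^{2}=\big\||A_{12}|+|A^*_{21}|\big\|_{ber}\big\||A_{21}|+|A^*_{12}|\big\|_{ber}$ and pulling out the factor $\tfrac12$ yields exactly the expression in the corollary. There is no real obstacle here beyond keeping track of the constants in the quadratic formula; the substantive work was already done in Theorem \ref{th4}, and this corollary is essentially a packaging of that bound into the explicit closed form available in the $2\times 2$ case.
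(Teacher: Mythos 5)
Your proposal is correct and follows essentially the same route as the paper: specialize Theorem \ref{th4} (equivalently Corollary \ref{co2}) to $n=2$ with $f(\lambda)=g(\lambda)=\sqrt{\lambda}$, symmetrize via Lemma \ref{lmi} to pass from the numerical radius to the spectral radius, and compute the larger eigenvalue of the resulting symmetric $2\times 2$ matrix. The constants and the final closed form match the paper's computation exactly.
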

\begin{proof}
	Taking $n=2$ in Theorem \ref{th4}, we get the inequality
	\begin{eqnarray*}
		\textbf{ber}\begin{pmatrix}
			\begin{bmatrix}
					A_{11} & 	A_{12}\\
				A_{21} & 	A_{22}
			\end{bmatrix}
		\end{pmatrix}&\leq& w \begin{pmatrix}
		\begin{bmatrix}
		\textbf{ber}(A_{11}) & 	a_{12}\\
			0 & 		\textbf{ber}(A_{22})
		\end{bmatrix}
	\end{pmatrix} \\
     &=& w \begin{pmatrix}
     	\begin{bmatrix}
     		\textbf{ber}(A_{11}) & 	\frac{1}{2}a_{12}\\
     		\frac{1}{2}a_{12} &  \textbf{ber}(A_{22})
     	\end{bmatrix}
     \end{pmatrix}     \Big(\mbox{by Lemma \ref{lmi}}\Big)  \\
    &=& r\begin{pmatrix}
    	\begin{bmatrix}
    		\textbf{ber}(A_{11}) & 	\frac{1}{2}a_{12}\\
    		\frac{1}{2}a_{12} &  \textbf{ber}(A_{22})
    	\end{bmatrix}
    \end{pmatrix} \\
&=& \frac{1}{2} \Bigg(\textbf{ber}(A_{11})+\textbf{ber}(A_{22})\\
&&+\sqrt{ \big(\textbf{ber}(A_{11})-\textbf{ber}(A_{22})\big)^2+ a_{12}^2}\Bigg),
	\end{eqnarray*} 
where $a_{12}=\big\||A_{12}|+|A^*_{21}|\big\|^{\frac{1}{2}}_{ber} \big\||A_{21}|+|A^*_{12}|\big\|^{\frac{1}{2}}_{ber}.$

\end{proof}

In particular, if  $A=\begin{bmatrix}
	0 & 	A_{12}\\
	A_{21} & 0
\end{bmatrix}$ then 
\[ \textbf{ber}\begin{pmatrix}
	\begin{bmatrix}
		0 & 	A_{12}\\
		A_{21} & 0
	\end{bmatrix}
\end{pmatrix}\leq\frac{1}{2} \big\||A_{12}|+|A^*_{21}|\big\|^{\frac{1}{2}}_{ber}  \, \big\||A_{21}|+|A^*_{12}|\big\|^{\frac{1}{2}}_{ber}.\]

In the following example we compute an upper bound of Berezin number for a $2 \times 2$ operator matrix by applying Corollary \ref{co5}.

\begin{example}
	Suppose  $\mathbb{M}=\begin{bmatrix}
		M_z & 	P_{\mathbb{C}}\\
		P_z & M_{z^2}
	\end{bmatrix} \in \mathbb{B}({H}^2(\mathbb{D})\oplus{H}^2(\mathbb{D}))$,  where $P_{\mathbb{C}}, P_z, M_z$ and $M_{z^2}$ are respectively defined as  $P_{\mathbb{C}}(f(z))=\langle f(z),1 \rangle,
	 P_z(f(z))=\langle f(z),z \rangle z, M_z(f(z))=zf(z)$ and $M_{z^2}(f(z))=z^2f(z)$ ($f \in {H}^2(\mathbb{D})$, $z\in \mathbb{D}$). Then a simple computation shows that $\textbf{ber}(M_z)=\textbf{ber}(M_{z^2})=1$ and $\||P_{\mathbb{C}}|+|P^*_z|\big\|_{ber} =\||P_z|+|P^*_{\mathbb{C}}|\|_{ber}=1.$ From Corollary \ref{co5}, it follows that
	 \begin{eqnarray*}
	 	\textbf{ber}(\mathbb{M})&\leq& \frac{1}{2} \Bigg(\textbf{ber}(M_z)+\textbf{ber}(M_{z^2})\\
	 	&&+\sqrt{ \big(\textbf{ber}(M_z)-\textbf{ber}(M_{z^2})\big)^2+\||P_{\mathbb{C}}|+|P^*_z|\big\|_{ber} \||P_z|+|P^*_{\mathbb{C}}|\|_{ber}}\Bigg)\\
	 	&=& 1.5.
	 \end{eqnarray*}
\end{example}

\begin{remark}
	(i) In \cite[Corollary 2.2]{Bak_CMJ_2018}, Bakherad obtained the following bound, namely,
	\begin{eqnarray}\label{eqn12}
		\textbf{ber}\begin{pmatrix}
		\begin{bmatrix}
			A_{11} & 	A_{12}\\
			A_{21} & 	A_{22}
		\end{bmatrix}
	\end{pmatrix}
	&\leq&\frac{1}{2} \Bigg(\textbf{ber}(A_{11})+\textbf{ber}(A_{22})\nonumber\\
	&+ & \sqrt{ \big(\textbf{ber}(A_{11})-\textbf{ber}(A_{22})\big)^2+(\|A_{12}\|+\|A_{21}\|)^2 }\Bigg).
	\end{eqnarray}
If we consider $\mathcal{H}=\mathbb{C}^2, A_{11}=A_{22}=\begin{bmatrix}
	1 & 0\\
	0 & 0
\end{bmatrix}$ and $A_{12}=A_{21}=\begin{bmatrix}
0 & 1\\
0 & 0
\end{bmatrix}$ then from the bound in Corollary \ref{co5} we get $\textbf{ber}\begin{pmatrix}
\begin{bmatrix}
	A_{11} & 	A_{12}\\
	A_{21} & 	A_{22}
\end{bmatrix}
\end{pmatrix}
\leq 1.5,$ whereas the inequality (\ref{eqn12}) gives  $\textbf{ber}\begin{pmatrix}
	\begin{bmatrix}
		A_{11} & 	A_{12}\\
		A_{21} & 	A_{22}
	\end{bmatrix}
\end{pmatrix}
\leq 2.$ Therefore, for this example the bound of Corollary \ref{co5} is better than the existing bound  \eqref{eqn12}.\\
(ii) The following upper bound  
\begin{eqnarray}\label{R1E2}
	\textbf{ber}\begin{pmatrix}
		\begin{bmatrix}
			A_{11} & 	A_{12}\\
			A_{21} & 	A_{22}
		\end{bmatrix}
	\end{pmatrix}
	&\leq & \frac{1}{2} \Bigg(\textbf{ber}(A_{11})+\textbf{ber}(A_{22})\nonumber\\
	&+& \sqrt{ \big(\textbf{ber}(A_{11})-\textbf{ber}(A_{22})\big)^2+4w^2\begin{pmatrix}
			\begin{bmatrix}
				0& 	A_{12}\\
				A_{21} & 0
			\end{bmatrix}
	\end{pmatrix} }\Bigg)
\end{eqnarray}
was obtained in \cite[Corollary 3.2]{SDR_ACTA_2021}. Consider $	A_{11}=	A_{22}=0,$ $	A_{12}=	\begin{bmatrix}
	1 & 1\\
	0 & 0
\end{bmatrix}$ 
and $	A_{21}=	\begin{bmatrix}
	1 & 0\\
	1 & 0
\end{bmatrix}.$ Then it is easy to observe that  Corollary \ref{co5} gives  $\textbf{ber}\begin{pmatrix}
\begin{bmatrix}
	A_{11} & 	A_{12}\\
	A_{21} & 	A_{22}
\end{bmatrix}
\end{pmatrix}
\leq 1,$ whereas \eqref{R1E2} gives $\textbf{ber}\begin{pmatrix}
	\begin{bmatrix}
		A_{11} & 	A_{12}\\
		A_{21} & 	A_{22}
	\end{bmatrix}
\end{pmatrix}
\leq \sqrt2.$ Hence the bound obtained in  Corollary \ref{co5} is better than that 
given in \eqref{R1E2}.
\end{remark}

Our next theorem yields an upper bound of the Berezin norm for $n \times n$ operator matrices.

	\begin{theorem}\label{th8}
	Let $A=[A_{ij}]$ be an $n\times n$ operator matrix with $A_{ij}\in\mathbb{B}(\mathcal{H}_j,\mathcal{H}_i),$ $1 \leq i,j\leq n.$ Then 
	\begin{eqnarray*}
		\|A\|_{ber}	\leq \left\|\left[\|A_{ij}\|_{ber}\right]\right\|,  1 \leq i,j\leq n.
	\end{eqnarray*}
\end{theorem}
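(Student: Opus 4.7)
My plan is to mirror the block-entry argument of Theorem~\ref{th4}, but now applied to the off-diagonal bilinear form $\langle A\hat{k}_\lambda,\hat{k}_\mu\rangle$ that defines $\|\cdot\|_{ber}$. I will estimate each block by the defining supremum of $\|A_{ij}\|_{ber}$ and then recognize the resulting double sum as $\langle Bx,y\rangle$ for a pair of unit vectors in $\mathbb{C}^n$ and $B=[\|A_{ij}\|_{ber}]$.

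First, for $(\lambda_1,\ldots,\lambda_n)\in\Omega_1\times\cdots\times\Omega_n$, the reproducing kernel of $\mathcal{H}=\oplus_{i=1}^{n}\mathcal{H}_i$ at this point is $(k_{\lambda_1},\ldots,k_{\lambda_n})$, whose norm is $N_\lambda=\left(\sum_{j}\|k_{\lambda_j}\|^2\right)^{1/2}$; write $N_\mu$ similarly for $(\mu_1,\ldots,\mu_n)$. Expanding along the block decomposition of $A$ gives
\begin{equation*}
\langle A\,\hat{k}_{(\lambda_1,\ldots,\lambda_n)},\hat{k}_{(\mu_1,\ldots,\mu_n)}\rangle=\frac{1}{N_\lambda N_\mu}\sum_{i,j=1}^{n}\langle A_{ij}k_{\lambda_j},k_{\mu_i}\rangle,
\end{equation*}
and for each $(i,j)$ the very definition of $\|A_{ij}\|_{ber}$ yields
\begin{equation*}
|\langle A_{ij}k_{\lambda_j},k_{\mu_i}\rangle|=\|k_{\lambda_j}\|\,\|k_{\mu_i}\|\,|\langle A_{ij}\hat{k}_{\lambda_j},\hat{k}_{\mu_i}\rangle|\leq \|A_{ij}\|_{ber}\,\|k_{\lambda_j}\|\,\|k_{\mu_i}\|.
\end{equation*}

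Next, set $x_j=\|k_{\lambda_j}\|/N_\lambda$ and $y_i=\|k_{\mu_i}\|/N_\mu$. Then $x,y\in\mathbb{C}^n$ are unit vectors with non-negative coordinates, and combining the two displays gives
\begin{equation*}
|\langle A\,\hat{k}_{(\lambda_1,\ldots,\lambda_n)},\hat{k}_{(\mu_1,\ldots,\mu_n)}\rangle|\leq \sum_{i,j=1}^{n}\|A_{ij}\|_{ber}\,x_j y_i=\langle Bx,y\rangle\leq \|B\|,
\end{equation*}
where $B=[\|A_{ij}\|_{ber}]$. Taking the supremum over all $(\lambda_1,\ldots,\lambda_n),(\mu_1,\ldots,\mu_n)\in\Omega_1\times\cdots\times\Omega_n$ yields $\|A\|_{ber}\leq \|B\|$, as required.

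The only mildly fiddly point is carrying the normalization factor $N_\lambda N_\mu$ through correctly so that the auxiliary $n$-vectors $x,y$ are genuinely unit vectors in the last step; after that the bound is immediate from the definition of the operator norm. I see no deeper obstacle --- the statement is essentially a block-wise Cauchy--Schwarz once the bilinear form is decomposed along the block structure of $A$.
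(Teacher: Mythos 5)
Your argument is correct and is essentially identical to the paper's own proof: both expand $\langle A\hat{k}_{(\lambda_1,\ldots,\lambda_n)},\hat{k}_{(\mu_1,\ldots,\mu_n)}\rangle$ blockwise, bound each term by $\|A_{ij}\|_{ber}\|k_{\lambda_j}\|\|k_{\mu_i}\|$ from the definition of the Berezin norm, and recognize the resulting sum as $\langle Bx,y\rangle$ for unit vectors $x,y\in\mathbb{C}^n$ with $B=[\|A_{ij}\|_{ber}]$. Your explicit handling of the normalization factors $N_\lambda,N_\mu$ is a minor presentational difference (the paper absorbs this by taking the components $k_{\lambda_i}$ of the already-normalized kernel), not a different method.
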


\begin{proof}
	Let $\mathcal{H}=\oplus^n_{i=1}\mathcal{H}_i.$ For  $(\lambda_1,\lambda_2,\ldots,\lambda_n),(\mu_1,\mu_2,\ldots,\mu_n)\in \Omega_1\times \Omega_2\times \ldots \times \Omega_n,$ let 
	$\hat{k}_{(\lambda_1,\lambda_2,\ldots,\lambda_n)}=
	\begin{bmatrix}
		{k}_{\lambda_1}\\
		\vdots\\
		{k}_{\lambda_n}
	\end{bmatrix}$ and $\hat{k}_{(\mu_1,\mu_2,\ldots,\mu_n)}=
	\begin{bmatrix}
		{k}_{\mu_1}\\
		\vdots\\
		{k}_{\mu_n}
	\end{bmatrix}$
	be the corresponding normalized reproducing kernels of $\mathcal{H}.$ Then
	\begin{eqnarray*}
		|\langle A \hat{k}_{(\lambda_1,\lambda_2,\ldots,\lambda_n)},\hat{k}_{(\mu_1,\mu_2,\ldots,\mu_n)} \rangle|
		&& =\left| \sum_{i,j=1}^{n}\langle A_{ij}{k}_{\lambda_j},{k}_{\mu_i} \rangle \right|\\
		&& \leq \sum_{i,j=1}^{n}\left|\langle A_{ij}{k}_{\lambda_j},{k}_{\mu_i} \rangle \right|\\
		&&\leq\sum_{i,j=1}^{n}\|A_{ij}\|_{ber}\|{k}_{\lambda_j}\|\|{k}_{\mu_i}\|=\langle [\|A_{ij}\|_{ber}]x,y \rangle,
	\end{eqnarray*}
	where $x=\begin{bmatrix}
		\|{k}_{\lambda_1}\|\\
		\vdots\\
		\|{k}_{\lambda_n}\|\\
	\end{bmatrix}$
	and $y=\begin{bmatrix}
		\|{k}_{\mu_1}\|\\
		\vdots\\
		\|{k}_{\mu_n}\|\\
	\end{bmatrix}$.
	Since $\|x\|=\|y\|=1$,  we have 
	$$|\langle A \hat{k}_{(\lambda_1,\lambda_2,\ldots,\lambda_n)},\hat{k}_{(\mu_1,\mu_2,\ldots,\mu_n)} \rangle|\leq \|[\|A_{ij}\|_{ber}]\|.$$
	Therefore, taking the supremum over all $(\lambda_1,\lambda_2,\ldots,\lambda_n),(\mu_1,\mu_2,\ldots,\mu_n)\in \Omega_1\times \Omega_2\times \ldots \times \Omega_n,$ we get the desired result.
\end{proof}

Now, we give a computational example for an upper bound of the Berezin norm for a $2 \times 2$ operator matrix by applying Theorem \ref{th8}.
\begin{example}
	Let $\begin{bmatrix}
		P_{\mathbb{C}} &  P_z\\
		P_{z^2} &  P_{z^3}
	\end{bmatrix}\in \mathbb{B}({H}^2(\mathbb{D})\oplus {H}^2(\mathbb{D})),$ where $P_{\mathbb{C}}(f(z))=\langle f(z),1 \rangle$ and $P_{z^i}(f(z))=\langle f(z),z^i \rangle z^i$ $(f \in {H}^2(\mathbb{D})$, $z\in \mathbb{D})$ for $i=1,2,3.$ A short computation shows that $\|P_{\mathbb{C}}\|_{ber}=1,$ $\|P_{z}\|_{ber}=1/4,$ $\|P_{z^2}\|_{ber}=4/27$ and $\|P_{z^3}\|_{ber}=27/256.$ From Theorem \ref{th8}, it follows that $\left\|\begin{bmatrix}
		P_{\mathbb{C}} &  P_z\\
		P_{z^2} &  P_{z^3}
	\end{bmatrix}\right\|_{ber} \leq \left\|\begin{bmatrix}
		1 &  1/4\\
		4/27 &  27/256
	\end{bmatrix}\right\| \approx 1.045.$
\end{example}

The following inequalities concerning $\lowercase{2}\times \lowercase{2}$ operator matrices follows immediately from Theorem \ref{th8}.
\begin{cor}\label{c28}
	Let $A\in \mathbb{B}(\mathcal{H}_{1}), B\in \mathbb{B}(\mathcal{H}_{2},\mathcal{H}_{1}), C\in\mathbb{B}(\mathcal{H}_{1},\mathcal{H}_{2})$ and $ D\in\mathbb{B}(\mathcal{H}_{2}).$ Then
	\begin{eqnarray*}
	(i) ~~\left\|	\begin{bmatrix}
			A & 0\\
			0 & D
		\end{bmatrix}\right\|_{ber} \leq  \max \left\{\|A\|_{ber}, \|D\|_{ber}\right\},\\
	(ii) ~~\left\|	\begin{bmatrix}
			0 & B\\
			C & 0
		\end{bmatrix}\right\|_{ber} \leq  \max \left\{\|B\|_{ber}, \|C\|_{ber}\right\}. 
	\end{eqnarray*}
\end{cor}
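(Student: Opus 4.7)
\medskip

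\noindent\textbf{Proof proposal for Corollary \ref{c28}.} Both inequalities are direct specializations of Theorem \ref{th8} to the case $n=2$, so my plan is to apply that theorem and then simply compute the operator norm of the resulting $2\times 2$ nonnegative scalar matrix.

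For part (i), Theorem \ref{th8} gives
\[
\left\|\begin{bmatrix} A & 0 \\ 0 & D \end{bmatrix}\right\|_{ber} \leq \left\|\begin{bmatrix} \|A\|_{ber} & 0 \\ 0 & \|D\|_{ber} \end{bmatrix}\right\|,
\]
and the right-hand side is the operator norm of a diagonal $2\times 2$ complex matrix, which equals the maximum of the moduli of its diagonal entries, namely $\max\{\|A\|_{ber},\|D\|_{ber}\}$. For part (ii), Theorem \ref{th8} yields
\[
\left\|\begin{bmatrix} 0 & B \\ C & 0 \end{bmatrix}\right\|_{ber} \leq \left\|\begin{bmatrix} 0 & \|B\|_{ber} \\ \|C\|_{ber} & 0 \end{bmatrix}\right\|,
\]
and the off-diagonal scalar matrix on the right has singular values $\|B\|_{ber}$ and $\|C\|_{ber}$ (as one sees by computing the eigenvalues of $M^*M$), whence its operator norm is $\max\{\|B\|_{ber},\|C\|_{ber}\}$.

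There is essentially no obstacle here: the content lies entirely in Theorem \ref{th8}, and what remains is the elementary fact that for a $2\times 2$ scalar matrix which is either diagonal with nonnegative entries or purely off-diagonal with nonnegative entries, the operator norm equals the largest entry. One could spell this out either by a direct singular-value calculation or by invoking Lemma \ref{lmi}, noting that in both cases the symmetrized matrix $\tfrac{1}{2}(M+M^*)$ has spectral radius equal to the claimed maximum. Either way the argument closes in one line after applying Theorem \ref{th8}.
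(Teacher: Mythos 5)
Your main argument is correct and is exactly what the paper intends: the paper gives no proof beyond saying the corollary ``follows immediately from Theorem \ref{th8}'', and your specialization to $n=2$ together with the singular-value computation (the diagonal case has norm $\max\{\|A\|_{ber},\|D\|_{ber}\}$; for $M=\bigl[\begin{smallmatrix}0 & b\\ c& 0\end{smallmatrix}\bigr]$ one has $M^*M=\mathrm{diag}(c^2,b^2)$, so $\|M\|=\max\{b,c\}$) fills in precisely the omitted routine step. One caveat: your closing suggestion to instead invoke Lemma \ref{lmi} is not a valid alternative, since that lemma computes the \emph{numerical radius} $w(M)=r\bigl(\tfrac{M+M^*}{2}\bigr)$, not the operator norm, and for the off-diagonal matrix this spectral radius equals $\tfrac{b+c}{2}$, which differs from $\max\{b,c\}$ in general (e.g.\ $b=1$, $c=0$); stick with the direct singular-value calculation.
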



\begin{example}
	Let  $\mathbb{P}=\begin{bmatrix}
		P_{\mathbb{C}} &  0\\
		0 &  P_z
	\end{bmatrix}\in \mathbb{B}({H}^2(\mathbb{D})\oplus {H}^2(\mathbb{D})),$  where $P_{\mathbb{C}}$ and $P_z$ on ${H}^2(\mathbb{D}),$ respectively defined as  $P_{\mathbb{C}}(f(z))=\langle f(z),1 \rangle$ and $P_z(f(z))=\langle f(z),z \rangle z$ ($f \in {H}^2(\mathbb{D})$, $z\in \mathbb{D}$). Then by simple computation it is easy to observe that $\|\mathbb{P}\|_{ber} =0.536, \left\|P_{\mathbb{C}}\right\|_{ber}=1$ and $\left\|P_{z}\right\|_{ber}=1/4.$ Therefore for this example we have $\left\|\begin{bmatrix}
		P_{\mathbb{C}} &  0\\
		0 &  P_z
	\end{bmatrix}\right\|_{ber} =0.536 <1= \max\{\left\|P_{\mathbb{C}}\right\|_{ber},\left\|P_{z}\right\|_{ber}\}.$ 
\end{example}

\begin{remark}
 By using the fact $\textbf{ber}\begin{pmatrix}
		\begin{bmatrix}
			0 &  B\\
			C &  0
		\end{bmatrix}
	\end{pmatrix}\leq\left\|	\begin{bmatrix}
	0 & B\\
	C & 0
	\end{bmatrix}\right\|_{ber}, $ from Corollary \ref{c28} (ii) we have
	 \begin{eqnarray}\label{eqn14}
	 	\textbf{ber}\begin{pmatrix}
		\begin{bmatrix}
			0 & B\\
			C & 0
		\end{bmatrix} 
	\end{pmatrix}\leq  \max \left\{\|B\|_{ber}, \|C\|_{ber}\right\}.
\end{eqnarray}
Now we give an example to show that the bounds in (\ref{eqn14}) and Lemma \ref{lm7} (ii) are not comparable, in general.
If we consider $\mathcal{H}_{1}=\mathcal{H}_{2}=\mathbb{C}^2,B= \begin{bmatrix}
	1 & 1\\
	0 & 0
\end{bmatrix}$ and $
C= \begin{bmatrix}
1 & 0\\
1 & 0
\end{bmatrix}$
then from (\ref{eqn14}), we get $\textbf{ber}\begin{pmatrix}
	\begin{bmatrix}
		0 & B\\
		C & 0
	\end{bmatrix} 
\end{pmatrix}\leq 1,$ whereas from Lemma \ref{lm7} (ii), we get  $\textbf{ber}\begin{pmatrix}
\begin{bmatrix}
	0 & B\\
	C & 0
\end{bmatrix} 
\end{pmatrix}\leq \sqrt{2}.$
Again, if we consider $B=\begin{bmatrix}
	1 & 0\\
	0 & 0
\end{bmatrix},
C=\begin{bmatrix}
0 & 0\\
0 & 2
\end{bmatrix}$
then the inequality (\ref{eqn14}) gives $\textbf{ber}\begin{pmatrix}
	\begin{bmatrix}
		0 & B\\
		C & 0
	\end{bmatrix} 
\end{pmatrix}\leq 2,$
whereas Lemma \ref{lm7} (ii) gives $\textbf{ber}\begin{pmatrix}
	\begin{bmatrix}
		0 & B\\
		C & 0
	\end{bmatrix} 
\end{pmatrix}\leq 1.5.$
\end{remark}

Next result gives an estimation for the Berezin number of  $2 \times 2$ off diagonal operator matrices.

\begin{theorem}\label{th5}
Let $  A\in \mathbb{B}(\mathcal{H}_{2},\mathcal{H}_{1}),  B\in\mathbb{B}(\mathcal{H}_{1},\mathcal{H}_{2})$ and $\alpha \in \mathbb{C}\backslash\{0\}.$ Then
 \begin{eqnarray*}
	\textbf{ber}^2\begin{pmatrix}
		\begin{bmatrix}
			0 & A\\
			B & 0
		\end{bmatrix} 
	\end{pmatrix}
  &\leq& \frac{1}{|\alpha|}\max\Big\{\textbf {ber}(AB),\textbf {ber}(BA)\Big\}\\
  &+& \frac{\max\{1,|\alpha-1|\}}{2|\alpha|} \max\Big\{\|AA^*+B^*B\|_{ber}, \|BB^*+A^*A\|_{ber}\Big\}.
\end{eqnarray*}
\end{theorem}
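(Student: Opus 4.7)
The plan is to derive the claim from a general single-operator Berezin inequality
\begin{eqnarray*}
\textbf{ber}^2(S) \leq \frac{1}{|\alpha|}\textbf{ber}(S^2) + \frac{\max\{1,|\alpha-1|\}}{2|\alpha|}\|S^*S + SS^*\|_{ber},
\end{eqnarray*}
valid for every $S\in\mathbb{B}(\mathcal{H})$ and $\alpha\in\mathbb{C}\setminus\{0\}$, by specializing $S$ to the off-diagonal matrix $T$. This is the natural move because both $T^2$ and $T^*T + TT^*$ come out block-diagonal, so their Berezin quantities are controlled by earlier results in the paper.

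To prove the auxiliary inequality, I would start from the identity $\langle x, S^*x\rangle = \overline{\langle S^*x, x\rangle} = \overline{\langle x, Sx\rangle} = \langle Sx, x\rangle$, which gives
\begin{eqnarray*}
|\langle S\hat{k}_\lambda, \hat{k}_\lambda\rangle|^2 = |\langle S\hat{k}_\lambda, \hat{k}_\lambda\rangle\,\langle \hat{k}_\lambda, S^*\hat{k}_\lambda\rangle|.
\end{eqnarray*}
The right-hand side has exactly the shape $|\langle x_1, e\rangle\langle e, x_2\rangle|$ required by Lemma \ref{lm6}; with $x_1 = S\hat{k}_\lambda$, $e = \hat{k}_\lambda$, $x_2 = S^*\hat{k}_\lambda$ (and using $\langle S\hat{k}_\lambda, S^*\hat{k}_\lambda\rangle = \langle S^2\hat{k}_\lambda, \hat{k}_\lambda\rangle$), the lemma produces the upper bound
\begin{eqnarray*}
\frac{1}{|\alpha|}\bigl(\max\{1,|\alpha-1|\}\,\|S\hat{k}_\lambda\|\,\|S^*\hat{k}_\lambda\| + |\langle S^2\hat{k}_\lambda, \hat{k}_\lambda\rangle|\bigr).
\end{eqnarray*}
AM-GM then dominates the first product by $\frac{1}{2}\langle (S^*S + SS^*)\hat{k}_\lambda, \hat{k}_\lambda\rangle$. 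Taking the supremum over $\lambda$, and recalling that for the positive operator $S^*S+SS^*$ its Berezin number coincides with its Berezin norm, completes the auxiliary inequality.

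Specializing to $S = T$, direct multiplication yields
\begin{eqnarray*}
T^2 = \begin{bmatrix} AB & 0 \\ 0 & BA \end{bmatrix} \quad\text{and}\quad T^*T + TT^* = \begin{bmatrix} AA^*+B^*B & 0 \\ 0 & A^*A+BB^* \end{bmatrix}.
\end{eqnarray*}
Lemma \ref{lm7}(i) gives $\textbf{ber}(T^2) \leq \max\{\textbf{ber}(AB), \textbf{ber}(BA)\}$, and Corollary \ref{c28}(i) gives $\|T^*T+TT^*\|_{ber} \leq \max\{\|AA^*+B^*B\|_{ber},\|BB^*+A^*A\|_{ber}\}$. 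Substituting these into the auxiliary inequality produces precisely the advertised bound.

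The main obstacle, in my view, is spotting the identity $\langle x, S^*x\rangle = \langle Sx, x\rangle$ that puts $|\widetilde S(\lambda)|^2$ into Buzano form with the crucial choice $x_2 = S^*\hat{k}_\lambda$; a more naive choice (say $x_2 = S\hat{k}_\lambda$) would collapse the estimate to a trivial Cauchy-Schwarz bound. A direct attack exploiting the off-diagonal structure—writing $\widetilde T(\lambda_1,\lambda_2) = y_1 y_2(\langle A\hat{k}_{\lambda_2},\hat{k}_{\lambda_1}\rangle + \langle B\hat{k}_{\lambda_1},\hat{k}_{\lambda_2}\rangle)$ with $y_1^2 + y_2^2 = 1$ and expanding $|z_1+z_2|^2$—produces the right qualitative ingredients, but the estimate $y_1^2 y_2^2 \leq \tfrac{1}{4}$ forces coefficients incompatible with the target $\tfrac{1}{|\alpha|}$ and $\tfrac{\max\{1,|\alpha-1|\}}{2|\alpha|}$, whereas the general single-operator route hits them exactly.
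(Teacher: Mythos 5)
Your proof is correct and is essentially the paper's own argument: the paper applies the identity $|\langle T\hat{k},\hat{k}\rangle|^2=|\langle T\hat{k},\hat{k}\rangle\langle \hat{k},T^*\hat{k}\rangle|$, Lemma \ref{lm6}, AM--GM, and Lemma \ref{lm7}(i) directly to the off-diagonal operator $T$, whereas you merely factor the identical computation through a general single-operator inequality before specializing $S=T$. This packaging difference is cosmetic; every substantive step matches.
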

\begin{proof}
Let $T=\begin{bmatrix}
	0 & A\\
	B & 0
\end{bmatrix}.$
For $(\lambda_1,\lambda_2)\in \Omega_1\times \Omega_2,$ let  $\hat{k}_{(\lambda_1,\lambda_2)}$ be a normalized reproducing kernel of $\mathcal{H}_{1} \oplus \mathcal{H}_{2}.$ Then
\begin{eqnarray*}
    && |\langle T\hat{k}_{(\lambda_1,\lambda_2)},\hat{k}_{(\lambda_1,\lambda_2)}\rangle|^2\\
    &=&  |\langle T\hat{k}_{(\lambda_1,\lambda_2)},\hat{k}_{(\lambda_1,\lambda_2)}\rangle\langle \hat{k}_{(\lambda_1,\lambda_2)},T^*\hat{k}_{(\lambda_1,\lambda_2)}\rangle|\\
	&\leq& \frac{1}{|\alpha|}\Big(|\langle T\hat{k}_{(\lambda_1,\lambda_2)},T^*\hat{k}_{(\lambda_1,\lambda_2)}\rangle|+\max\{1,|\alpha-1|\} \|T\hat{k}_{(\lambda_1,\lambda_2)}\| \|T^*\hat{k}_{(\lambda_1,\lambda_2)}\|\Big)\\
	&&\,\,\,\,\,\,\,\,\,\,\,\,\,\,\,\,\,\,\,\,\,\,\,\,\,\,\,\,\,\, \,\,\,\,\,\,\,\,\,\,\,\,\,\,\,\,\,\,\,\,\,\,\,\,\,\,\,\,\,\,\,\,\,\, \,\,\,\,\,\,\,\,\,\,\,\,\,\,\,\,\,\,\,\,\,\,\,\,\,\,\,\,\,\,\,\,\,\, \,\,\,\,\,\,\,\,\,\,\,\,\,\,\,\,\,\,\,\,\,\,\,\,\,\,\,\,\,\,\,\,\,\, \,\,\,\, \Big(\mbox{by Lemma \ref{lm6}}\Big)\\
	&\leq&  \frac{1}{|\alpha|}\Bigg(|\langle T^2\hat{k}_{(\lambda_1,\lambda_2)},\hat{k}_{(\lambda_1,\lambda_2)}\rangle|+\frac{\max\{1,|\alpha-1|\}}{2} \left(\|T\hat{k}_{(\lambda_1,\lambda_2)}\|^2+ \|T^*\hat{k}_{(\lambda_1,\lambda_2)}\|^2\right)\Bigg)\\
	&=&\frac{1}{|\alpha|}\Bigg(|\langle T^2\hat{k}_{(\lambda_1,\lambda_2)},\hat{k}_{(\lambda_1,\lambda_2)}\rangle|+\frac{\max\{1,|\alpha-1|\}}{2} \langle (T^*T+TT^*)\hat{k}_{(\lambda_1,\lambda_2)},\hat{k}_{(\lambda_1,\lambda_2)}\rangle\Bigg)\\
	&\leq& \frac{1}{|\alpha|}\Bigg(\textbf{ber}\left(\begin{bmatrix}
		AB & 0\\
		0 & BA
	\end{bmatrix}\right)+\frac{\max\{1,|\alpha-1|\}}{2}\textbf{ber}\left(\begin{bmatrix}
	AA^*+B^*B & 0\\
	0 & A^*A+BB^*
	\end{bmatrix}\right)\Bigg)\\
	&\leq& \frac{1}{|\alpha|}\max\Big\{\textbf {ber}(AB),\textbf {ber}(BA)\Big\}\\
	&~~&+ \frac{\max\{1,|\alpha-1|\}}{2|\alpha|} \max\Big\{\|AA^*+B^*B\|_{ber}, \|BB^*+A^*A\|_{ber}\Big\} 	\,\,\,\,\Big(\mbox{by Lemma \ref{lm7}}(\text{i})\Big).
\end{eqnarray*}
 Taking the supremum over all $(\lambda_1,\lambda_2)\in \Omega_1\times \Omega_2,$ we get the desired result.
\end{proof}

The following corollary is obvious from  Theorem \ref{th5}.
\begin{cor}\label{co3}
If $\mathcal{H}_{1}=\mathcal{H}_{2}$ then from Theorem \ref{th5}, we have
\begin{eqnarray}\label{inq2}
\textbf{ber}^2\begin{pmatrix}
	\begin{bmatrix}
		0 & A\\
		A & 0
	\end{bmatrix}
		\end{pmatrix} \leq\frac{1}{2|\alpha|}  \max\{1,|\alpha-1|\}\|AA^*+A^*A\|_{ber}+\frac{1}{|\alpha|}\textbf{ber}(A^2).
\end{eqnarray}
 Considering $\alpha=n>1(n \in \mathbb{N})$ in (\ref{inq2}), we get
\begin{eqnarray*}\textbf{ber}^2\begin{pmatrix}
	\begin{bmatrix}
		0 & A\\
		A & 0
	\end{bmatrix}
\end{pmatrix} \leq \frac{n-1}{2n}\|AA^*+A^*A\|_{ber}+\frac{1}{n}\textbf{ber}(A^2).
\end{eqnarray*}
Now, taking  $n\rightarrow \infty$, we have 
\begin{eqnarray}\label{inq3}
	\textbf{ber}^2\begin{pmatrix}
		\begin{bmatrix}
			0 & A\\
			A & 0
		\end{bmatrix}
	\end{pmatrix} \leq\frac{1}{2}\|AA^*+A^*A\|_{ber}\leq \|A\|^2.\end{eqnarray}
	Therefore, the bound (\ref{inq3}) refines the existing bound in Lemma \ref{lm7} (ii).
\end{cor}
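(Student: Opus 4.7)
The plan is to simply specialize Theorem \ref{th5} with $B=A$ acting on $\mathcal{H}_1=\mathcal{H}_2$, and then extract two successive simplifications from the resulting inequality: first an optimization in $\alpha$ along the positive integers, and then the passage to the limit $n\to\infty$.

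First, I would substitute $B=A$ into the conclusion of Theorem \ref{th5}. Under this substitution we have $AB=BA=A^2$, $AA^*+B^*B=AA^*+A^*A$, and $BB^*+A^*A=AA^*+A^*A$, so both of the maxima appearing in Theorem \ref{th5} collapse: $\max\{\textbf{ber}(AB),\textbf{ber}(BA)\}=\textbf{ber}(A^2)$ and $\max\{\|AA^*+B^*B\|_{ber},\|BB^*+A^*A\|_{ber}\}=\|AA^*+A^*A\|_{ber}$. This immediately yields the first displayed inequality \eqref{inq2}.

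Next, I choose $\alpha=n$ with $n\in\mathbb{N}$ and $n>1$. Then $|\alpha|=n$ and $\max\{1,|\alpha-1|\}=\max\{1,n-1\}=n-1$, so the coefficient of $\|AA^*+A^*A\|_{ber}$ in \eqref{inq2} becomes $\tfrac{n-1}{2n}$ and the coefficient of $\textbf{ber}(A^2)$ becomes $\tfrac{1}{n}$. This is exactly the intermediate inequality stated. Since $n$ is arbitrary in $\{2,3,\ldots\}$ and the left-hand side does not depend on $n$, I may let $n\to\infty$: the coefficient $\tfrac{n-1}{2n}\to \tfrac12$ and $\tfrac{1}{n}\to 0$, producing $\textbf{ber}^2\!\left(\begin{bmatrix}0&A\\A&0\end{bmatrix}\right)\leq \tfrac12\|AA^*+A^*A\|_{ber}$.

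Finally, I need the second comparison $\tfrac12\|AA^*+A^*A\|_{ber}\leq \|A\|^2$. This follows because $\|\cdot\|_{ber}\leq \|\cdot\|$ on $\mathbb{B}(\mathcal{H})$ (as recorded in the introduction), and by the triangle inequality $\|AA^*+A^*A\|\leq \|AA^*\|+\|A^*A\|=2\|A\|^2$. Combining these gives the chain \eqref{inq3}, which is strictly sharper than the bound $\textbf{ber}^2(\cdot)\leq \|A\|^2$ obtained by squaring Lemma \ref{lm7}(ii), thereby justifying the refinement claim. There is no real obstacle here beyond checking that the $\alpha=n$ choice actually makes the maximum $\max\{1,|n-1|\}$ equal to $n-1$ (which forces $n>1$, exactly as the statement requires) and that the limiting argument is valid because $n$ appears only on the right-hand side.
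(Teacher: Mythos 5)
Your proposal is correct and follows exactly the route the paper intends: specializing Theorem \ref{th5} with $B=A$ so that both maxima collapse, choosing $\alpha=n>1$ so that $\max\{1,|\alpha-1|\}=n-1$, letting $n\to\infty$, and finishing with $\|AA^*+A^*A\|_{ber}\leq\|AA^*+A^*A\|\leq 2\|A\|^2$. The only addition you make beyond the paper's (essentially embedded) argument is spelling out the justification of the final comparison with $\|A\|^2$, which is a welcome clarification rather than a deviation.
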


In particular, choosing $\alpha=n>1(n\in\mathbb{N})$ and taking $n\rightarrow \infty$ in Theorem \ref{th5}, we get the following corollary.
\begin{cor}\label{co4}
	Let $  A\in \mathbb{B}(\mathcal{H}_{2},\mathcal{H}_{1})$ and $ B\in\mathbb{B}(\mathcal{H}_{1},\mathcal{H}_{2}).$ Then
 \begin{eqnarray*}
	\textbf{ber}^2\begin{pmatrix}
		\begin{bmatrix}
			0 & A\\
			B & 0
		\end{bmatrix}
	\end{pmatrix}&\leq&\frac{1}{2} \max\Big\{\|AA^*+B^*B\|_{ber}, \|BB^*+A^*A\|_{ber}\Big\}. 
\end{eqnarray*}
\end{cor}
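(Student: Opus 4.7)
The plan is to simply invoke Theorem \ref{th5} with a suitable sequence of parameters $\alpha$ and pass to a limit. The inequality in Theorem \ref{th5} holds for every $\alpha \in \mathbb{C}\setminus\{0\}$, so I would specialize to $\alpha = n$ where $n$ is a positive integer with $n \geq 2$. With this choice $|\alpha| = n$ and $|\alpha-1| = n-1 \geq 1$, so $\max\{1, |\alpha-1|\} = n-1$.

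Plugging these values into the conclusion of Theorem \ref{th5} transforms it into
\begin{eqnarray*}
\textbf{ber}^2\begin{pmatrix}\begin{bmatrix} 0 & A\\ B & 0\end{bmatrix}\end{pmatrix}
&\leq& \frac{1}{n}\max\Big\{\textbf{ber}(AB),\textbf{ber}(BA)\Big\}\\
&& +\, \frac{n-1}{2n}\max\Big\{\|AA^*+B^*B\|_{ber},\|BB^*+A^*A\|_{ber}\Big\},
\end{eqnarray*}
which holds for every integer $n\geq 2$. The left-hand side does not depend on $n$, so I would now let $n \to \infty$; the first term tends to $0$ while the coefficient $\frac{n-1}{2n}$ tends to $\frac{1}{2}$. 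Passing to the limit yields exactly the asserted bound.

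There is essentially no obstacle here: everything reduces to checking that the two $n$-dependent coefficients in the bound of Theorem \ref{th5} behave correctly as $n\to\infty$, which is immediate. The only thing to be slightly careful about is that the inequality holds for \emph{each} fixed $n$, so taking a pointwise limit of a constant (the left-hand side) against a convergent upper bound is legitimate, and the parenthetical remark in the corollary statement is already a faithful description of the argument.
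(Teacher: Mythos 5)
Your proposal is correct and is precisely the paper's own argument: the authors likewise set $\alpha=n>1$ ($n\in\mathbb{N}$) in Theorem \ref{th5}, observe that the term $\frac{1}{n}\max\{\textbf{ber}(AB),\textbf{ber}(BA)\}$ vanishes and the coefficient $\frac{n-1}{2n}$ tends to $\frac{1}{2}$, and let $n\to\infty$. Nothing further is needed.
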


Next we develop the following upper bound for  $2 \times 2$ operator matrices.

\begin{theorem}\label{th6}
	Let $A\in \mathbb{B}(\mathcal{H}_{1}), B\in \mathbb{B}(\mathcal{H}_{2},\mathcal{H}_{1}), C\in\mathbb{B}(\mathcal{H}_{1},\mathcal{H}_{2}), D\in\mathbb{B}(\mathcal{H}_{2})$ and $\alpha \in \mathbb{C}\backslash\{0\}.$ Then
	\begin{eqnarray*}
		\textbf{ber}^2\begin{pmatrix}
			\begin{bmatrix}
				A & B\\
				C & D
			\end{bmatrix}
		\end{pmatrix}&\leq&\max\Big\{\textbf{ber}^2(A),\textbf{ber}^2(D)\Big\}+\max\Big\{\|B\|^{2}_{ber},\|C\|^{2}_{ber}\Big\}\\
		&&+\frac{\max\{1,|\alpha-1|\}}{|\alpha|} \max\Big\{\|A^*A+BB^*\|_{ber},\|CC^*+D^*D\|_{ber}\Big\}\\
		&&+\frac{2}{|\alpha|}\max\Big\{\|BD\|_{ber}, \|CA\|_{ber}\Big\}.
	\end{eqnarray*}
\end{theorem}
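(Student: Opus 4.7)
The plan is to split $T = \begin{bmatrix} A & B \\ C & D \end{bmatrix}$ as $T = T_1 + T_2$ with $T_1 = \begin{bmatrix} A & 0 \\ 0 & D \end{bmatrix}$ and $T_2 = \begin{bmatrix} 0 & B \\ C & 0 \end{bmatrix}$, and then use $\langle T\hat{k},\hat{k}\rangle = \langle T_1\hat{k},\hat{k}\rangle + \langle T_2\hat{k},\hat{k}\rangle$ together with the triangle inequality to get
\[
|\langle T\hat{k},\hat{k}\rangle|^2 \leq |\langle T_1\hat{k},\hat{k}\rangle|^2 + 2|\langle T_1\hat{k},\hat{k}\rangle||\langle T_2\hat{k},\hat{k}\rangle| + |\langle T_2\hat{k},\hat{k}\rangle|^2
\]
for every normalized reproducing kernel $\hat{k} = \hat{k}_{(\lambda_1,\lambda_2)}$, and then handle the three pieces separately.

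Two of the four summands in the stated bound fall out immediately: Lemma \ref{lm7}(i) gives $|\langle T_1\hat{k},\hat{k}\rangle|^2 \leq \textbf{ber}^2(T_1) \leq \max\{\textbf{ber}^2(A),\textbf{ber}^2(D)\}$, and since $|\langle T_2\hat{k},\hat{k}\rangle| \leq \|T_2\|_{ber}$, Corollary \ref{c28}(ii) yields $|\langle T_2\hat{k},\hat{k}\rangle|^2 \leq \max\{\|B\|^2_{ber}, \|C\|^2_{ber}\}$. The decisive step is the cross term. I would apply Lemma \ref{lm6} with $x_1 = T_1\hat{k}$, $x_2 = T_2^*\hat{k}$, $e = \hat{k}$; using $\langle \hat{k}, T_2^*\hat{k}\rangle = \langle T_2\hat{k},\hat{k}\rangle$ and $\langle T_1\hat{k}, T_2^*\hat{k}\rangle = \langle T_2T_1\hat{k}, \hat{k}\rangle$, this gives
\[
2|\langle T_1\hat{k},\hat{k}\rangle||\langle T_2\hat{k},\hat{k}\rangle| \leq \frac{2\max\{1,|\alpha-1|\}}{|\alpha|}\|T_1\hat{k}\|\|T_2^*\hat{k}\| + \frac{2}{|\alpha|}|\langle T_2T_1\hat{k},\hat{k}\rangle|.
\]
The AM--GM bound $\|T_1\hat{k}\|\|T_2^*\hat{k}\| \leq \tfrac{1}{2}\langle (T_1^*T_1 + T_2T_2^*)\hat{k},\hat{k}\rangle$ then converts the first piece into a quadratic form.

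The routine block-matrix computations
\[
T_1^*T_1 + T_2T_2^* = \begin{bmatrix} A^*A+BB^* & 0 \\ 0 & D^*D+CC^* \end{bmatrix}, \qquad T_2T_1 = \begin{bmatrix} 0 & BD \\ CA & 0 \end{bmatrix}
\]
are precisely what is needed: Lemma \ref{lm7}(i) applied to the diagonal positive operator (for which $\textbf{ber}=\|\cdot\|_{ber}$) bounds $\langle (T_1^*T_1+T_2T_2^*)\hat{k},\hat{k}\rangle$ by $\max\{\|A^*A+BB^*\|_{ber}, \|CC^*+D^*D\|_{ber}\}$, and Corollary \ref{c28}(ii) applied to $T_2T_1$ bounds $|\langle T_2T_1\hat{k},\hat{k}\rangle|$ by $\max\{\|BD\|_{ber}, \|CA\|_{ber}\}$, producing the two $1/|\alpha|$-weighted terms in the conclusion. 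Adding everything together and taking the supremum over $(\lambda_1,\lambda_2) \in \Omega_1\times\Omega_2$ yields the theorem.

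The principal subtlety is locating the correct vectors to feed into Lemma \ref{lm6}. If one applied it directly to $T$ (choosing $x_1 = T\hat{k}$, $x_2 = T^*\hat{k}$), one would be forced to estimate $T^*T + TT^*$, whose diagonal blocks are $A^*A+AA^*+BB^*+C^*C$ and $B^*B+CC^*+D^*D+DD^*$, and $T^2$, whose blocks mix $A^2+BC$ with $AB+BD$; neither matches the desired combinations. It is the prior splitting $T = T_1+T_2$ followed by the \emph{asymmetric} choice $x_1 = T_1\hat{k}$, $x_2 = T_2^*\hat{k}$ in Lemma \ref{lm6} that produces the clean expressions $T_1^*T_1 + T_2T_2^*$ and $T_2T_1$ which deliver exactly the remaining two summands of the stated bound.
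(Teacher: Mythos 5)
Your proposal is correct and follows essentially the same route as the paper's proof: the paper also splits the operator matrix into its diagonal part $T$ and off-diagonal part $S$, bounds the two squared terms by Lemma \ref{lm7}(i) and Corollary \ref{c28}(ii), and handles the cross term exactly as you do, via Lemma \ref{lm6} applied to $T\hat{k}$ and $S^*\hat{k}$ followed by the AM--GM step producing $T^*T+SS^*$ and the product $ST$. The block computations and the final applications of Lemma \ref{lm7}(i) and Corollary \ref{c28}(ii) coincide with yours.
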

\begin{proof}
   Let $T=\begin{bmatrix}
   	A & 0 \\
   	0 & D
   \end{bmatrix} $ and $S=\begin{bmatrix}
   0 & B \\
   C & 0
   \end{bmatrix} .$  For $(\lambda_1,\lambda_2)\in \Omega_1\times \Omega_2,$ let $\hat{k}_{(\lambda_1,\lambda_2)}$ be a normalized reproducing kernel of $\mathcal{H}_{1} \oplus \mathcal{H}_{2}.$ Then
   \begin{eqnarray*}
   		&&\Bigg|\Bigg\langle\begin{bmatrix}
   		A & B\\
   		C & D
   	\end{bmatrix}\hat{k}_{(\lambda_1,\lambda_2)},\hat{k}_{(\lambda_1,\lambda_2)}\Bigg\rangle\Bigg|^2\\
   	&=&|\langle T\hat{k}_{(\lambda_1,\lambda_2)},\hat{k}_{(\lambda_1,\lambda_2)}\rangle+\langle S\hat{k}_{(\lambda_1,\lambda_2)},\hat{k}_{(\lambda_1,\lambda_2)}\rangle|^2\\
   &\leq& \big(|\langle T\hat{k}_{(\lambda_1,\lambda_2)},\hat{k}_{(\lambda_1,\lambda_2)}\rangle|+|\langle S\hat{k}_{(\lambda_1,\lambda_2)},\hat{k}_{(\lambda_1,\lambda_2)}\rangle|\big)^2\\
   &=&|\langle T\hat{k}_{(\lambda_1,\lambda_2)},\hat{k}_{(\lambda_1,\lambda_2)}\rangle|^2+|\langle S\hat{k}_{(\lambda_1,\lambda_2)},\hat{k}_{(\lambda_1,\lambda_2)}\rangle|^2\\
   &&+2|\langle T\hat{k}_{(\lambda_1,\lambda_2)},\hat{k}_{(\lambda_1,\lambda_2)}\rangle ||\langle S\hat{k}_{(\lambda_1,\lambda_2)},\hat{k}_{(\lambda_1,\lambda_2)}\rangle|\\
   &\leq& \textbf{ber}^2(T)+\textbf{ber}^2(S)+2|\langle T\hat{k}_{(\lambda_1,\lambda_2)},\hat{k}_{(\lambda_1,\lambda_2)}\rangle \langle \hat{k}_{(\lambda_1,\lambda_2)},S^*\hat{k}_{(\lambda_1,\lambda_2)}\rangle|\\
    &\leq&\textbf{ber}^2(T)+\textbf{ber}^2(S)+\frac{2}{|\alpha|} \max\{1,|\alpha-1|\}\|T\hat{k}_{(\lambda_1,\lambda_2)}\|\|S^*\hat{k}_{(\lambda_1,\lambda_2)}\|\\
    &&+\frac{2}{|\alpha|}|\langle T\hat{k}_{(\lambda_1,\lambda_2)},S^*\hat{k}_{(\lambda_1,\lambda_2)}\rangle| \,\,\,\,\,\,\,\,\, \Big(\mbox{by Lemma \ref{lm6}}\Big)\\
    &\leq& \textbf{ber}^2(T)+\textbf{ber}^2(S)+\frac{1}{|\alpha|} \max\{1,|\alpha-1|\}\big(\|T\hat{k}_{(\lambda_1,\lambda_2)}\|^2+\|S^*\hat{k}_{(\lambda_1,\lambda_2)}\|^2\big)\\
    &&+\frac{2}{|\alpha|}|\langle ST\hat{k}_{(\lambda_1,\lambda_2)},\hat{k}_{(\lambda_1,\lambda_2)}\rangle|\\
    &=&  \textbf{ber}^2(T)+\textbf{ber}^2(S)+\frac{1}{|\alpha|} \max\{1,|\alpha-1|\}\langle (T^*T+SS^*)\hat{k}_{(\lambda_1,\lambda_2)},\hat{k}_{(\lambda_1,\lambda_2)}\rangle\\
    &&+\frac{2}{|\alpha|}|\langle ST\hat{k}_{(\lambda_1,\lambda_2)},\hat{k}_{(\lambda_1,\lambda_2)}\rangle|\\
    &\leq& \textbf{ber}^2\left(\begin{bmatrix}
    	A & 0 \\
    	0 & D
    \end{bmatrix}\right)+\textbf{ber}^2\left(\begin{bmatrix}
    0 & B \\
    C & 0
    \end{bmatrix}\right)+\frac{\max\{1,|\alpha-1|\}}{|\alpha|} \\
    &&\textbf{ber}\left(\begin{bmatrix}
    A^*A+BB^* & 0 \\
    0 & CC^*+D^*D
    \end{bmatrix}\right)
    +\frac{2}{|\alpha|}\textbf{ber}\left(\begin{bmatrix}
    		0 & BD \\
    		CA & 0
    \end{bmatrix}\right)\\
    &\leq& \max\Big\{\textbf{ber}^2(A),\textbf{ber}^2(D)\Big\}+\max\Big\{\|B\|^{2}_{ber},\|C\|^{2}_{ber}\Big\}\\
    &&+\frac{\max\{1,|\alpha-1|\}}{|\alpha|} \max\Big\{\|A^*A+BB^*\|_{ber},\|CC^*+D^*D\|_{ber}\Big\}\\
    &&+\frac{2}{|\alpha|}\max\Big\{\|BD\|_{ber}, \|CA\|_{ber}\Big\}\,\,\,\,\Big(\mbox{by Lemma \ref{lm7} (i) and Corollary \ref{c28} (ii)}\Big).
   \end{eqnarray*}
    Taking the supremum over all $(\lambda_1,\lambda_2)\in \Omega_1\times \Omega_2,$ we get the desired result.
\end{proof}
For $\alpha=n>1 (n\in\mathbb{N})$ and taking $n\rightarrow\infty$  in Theorem \ref{th6}, we obtain the following bound.
\begin{cor}\label{co6}
	Let $A\in \mathbb{B}(\mathcal{H}_{1}), B\in \mathbb{B}(\mathcal{H}_{2},\mathcal{H}_{1}), C\in\mathbb{B}(\mathcal{H}_{1},\mathcal{H}_{2})$ and $ D\in\mathbb{B}(\mathcal{H}_{2}).$ Then
	\begin{eqnarray*}
	\textbf{ber}^2\begin{pmatrix}
		\begin{bmatrix}
			A & B\\
			C & D
		\end{bmatrix}
	\end{pmatrix}&\leq& \max\Big\{\textbf{ber}^2(A),\textbf{ber}^2(D)\Big\}+\max\Big\{\|B\|^{2}_{ber},\|C\|^{2}_{ber}\Big\}\\
	&&+\max\Big\{\|A^*A+BB^*\|_{ber},\|CC^*+D^*D\|_{ber}\Big\}.
	\end{eqnarray*}
\end{cor}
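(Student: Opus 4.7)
The plan is to obtain Corollary \ref{co6} as a direct specialization of Theorem \ref{th6} via a one-parameter limiting argument. Since the left-hand side $\textbf{ber}^2\begin{pmatrix}\begin{bmatrix} A & B \\ C & D \end{bmatrix}\end{pmatrix}$ is independent of the free parameter $\alpha$, I only need to control the $\alpha$-dependent coefficients on the right-hand side as $\alpha$ is sent to a suitable limit.

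First I would fix an integer $n \geq 2$ and substitute $\alpha = n$ in the bound from Theorem \ref{th6}. Since $n \geq 2 > 1$, we have $|\alpha - 1| = n - 1 \geq 1$, so $\max\{1,|\alpha-1|\} = n-1$. Hence the two $\alpha$-dependent coefficients reduce to $\frac{\max\{1,|\alpha-1|\}}{|\alpha|} = \frac{n-1}{n}$ and $\frac{2}{|\alpha|} = \frac{2}{n}$. This gives, for every integer $n \geq 2$,
\begin{eqnarray*}
\textbf{ber}^2\begin{pmatrix}\begin{bmatrix} A & B \\ C & D \end{bmatrix}\end{pmatrix}
&\leq& \max\Big\{\textbf{ber}^2(A),\textbf{ber}^2(D)\Big\}+\max\Big\{\|B\|^{2}_{ber},\|C\|^{2}_{ber}\Big\}\\
&& +\frac{n-1}{n}\max\Big\{\|A^*A+BB^*\|_{ber},\|CC^*+D^*D\|_{ber}\Big\}\\
&& +\frac{2}{n}\max\Big\{\|BD\|_{ber}, \|CA\|_{ber}\Big\}.
\end{eqnarray*}

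Then I would let $n \to \infty$. The first two terms on the right are independent of $n$; the coefficient $\frac{n-1}{n}$ converges to $1$ and $\frac{2}{n}$ converges to $0$. Since the remaining Berezin norms $\|A^*A + BB^*\|_{ber}$, $\|CC^*+D^*D\|_{ber}$, $\|BD\|_{ber}$, $\|CA\|_{ber}$ are all finite (because $A, B, C, D$ are bounded), passing to the limit in the inequality above yields exactly the stated bound of Corollary \ref{co6}.

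There is essentially no obstacle: the only subtlety to watch is ensuring $n \geq 2$ so that the identity $\max\{1, n-1\} = n-1$ holds (so that the $\frac{n-1}{n}$ factor really does tend to $1$ from below rather than being dominated by the constant $1$), and noting that the finiteness of all Berezin norms on the right guarantees that the $\frac{2}{n}$-term vanishes in the limit. Everything else is a mechanical substitution into Theorem \ref{th6}.
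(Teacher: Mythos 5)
Your proposal is correct and is exactly the paper's own derivation: the paper obtains Corollary \ref{co6} by setting $\alpha=n>1$ ($n\in\mathbb{N}$) in Theorem \ref{th6} and letting $n\rightarrow\infty$, which is precisely your limiting argument with the coefficients $\frac{n-1}{n}\rightarrow 1$ and $\frac{2}{n}\rightarrow 0$.
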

\begin{remark}
	If $\mathcal{H}_{1}=\mathcal{H}_{2}$ then from Corollary \ref{co6} we have
	\begin{eqnarray}\label{inq5}
			\textbf{ber}\begin{pmatrix}
			\begin{bmatrix}
				A & B\\
				B & A
			\end{bmatrix}
		\end{pmatrix}\leq \sqrt{\|B\|^{2}_{ber}+\textbf{ber}^2(A)+\|A^*A+BB^*\|_{ber}}.
	\end{eqnarray}
In \cite[Corollary 3.5]{hlb}, Hajmohamadi et al. obtained the following upper bound, namely,
	\begin{eqnarray}\label{inq6}
			\textbf{ber}\begin{pmatrix}
			\begin{bmatrix}
				A & B\\
				B & A
			\end{bmatrix}
		\end{pmatrix}\leq \frac{1}{2}\big(\textbf{ber}(|A|+|A^*|)+\textbf{ber}(|B|+|B^*|)\big).
	\end{eqnarray}
If we consider $\mathcal{H}_1=\mathcal{H}_2=\mathbb{C}^2, A=\begin{bmatrix}
	0 & 1\\
	0 & 0
\end{bmatrix}$ and $B=\begin{bmatrix}
1 & 0\\
0 & 0
\end{bmatrix}$
then  (\ref{inq5}) gives 
$\textbf{ber}\begin{pmatrix}
	\begin{bmatrix}
		A & B\\
		B & A
	\end{bmatrix}\end{pmatrix}\leq 1.41$ whereas (\ref{inq6}) gives $\textbf{ber}\begin{pmatrix}
\begin{bmatrix}
A & B\\
B & A
\end{bmatrix}\end{pmatrix}\leq 1.5.$ Hence for this example the bound of (\ref{inq5}) is finer than the existing bound (\ref{inq6}).
\end{remark}

Our next result reads as follows.

\begin{theorem}\label{th7}
Let	$A, B, C, D\in \mathbb{B}(\mathcal{H})$ and $\alpha \in \mathbb{C}\backslash\{0\}.$  Then
	\begin{eqnarray*}
&~~&	\left\|
		\begin{bmatrix}
			A & B\\
			C & D
		\end{bmatrix}
	\right\|_{ber}^2\\
	&\leq& \max\Big\{\textbf{ber}(|A|+i|C|),\textbf{ber}(|D|+i|B|)\Big\} 
\max\Big\{\textbf{ber}(|A^*|+i|B^*|),\textbf{ber}(|D^*|+i|C^*|)\Big\}\\
&~~&+\frac 12 \max\Big\{ \||A|^2+|C|^2\|_{ber},\||B|^2+|D|^2\|_{ber}\Big\}
+\max\Big\{\|C^*D\|_{ber}, \|B^*A\|_{ber}\Big\}.
\end{eqnarray*}
\end{theorem}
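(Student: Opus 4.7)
The plan is to decompose $T = \begin{bmatrix}A & B\\ C & D\end{bmatrix} = T_1 + T_2$ with $T_1 = \begin{bmatrix}A & 0\\ 0 & D\end{bmatrix}$ (diagonal) and $T_2 = \begin{bmatrix}0 & B\\ C & 0\end{bmatrix}$ (off-diagonal), and then estimate $|\langle T\hat{k},\hat{k}'\rangle|^2$ for arbitrary normalized reproducing kernels $\hat{k} = \hat{k}_{(\lambda_1,\lambda_2)}$, $\hat{k}' = \hat{k}_{(\mu_1,\mu_2)}$ of $\mathcal{H}\oplus\mathcal{H}$. The triangle inequality gives
\[|\langle T\hat{k},\hat{k}'\rangle|^2 \leq |\langle T_1\hat{k},\hat{k}'\rangle|^2 + |\langle T_2\hat{k},\hat{k}'\rangle|^2 + 2|\langle T_1\hat{k},\hat{k}'\rangle|\,|\langle T_2\hat{k},\hat{k}'\rangle|,\]
and the three terms of the claimed upper bound will come in order from these three contributions.

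For each squared term, I would expand $\langle T_i\hat{k},\hat{k}'\rangle$ as a sum of two scalar inner products and apply Lemma \ref{lm8} to each, followed by the AM--GM identity $2\sqrt{\alpha\alpha'\delta\delta'}\leq\alpha\delta'+\alpha'\delta$. This yields the factorizations $|\langle T_1\hat{k},\hat{k}'\rangle|^2\leq(\alpha+\delta)(\alpha'+\delta')$ and $|\langle T_2\hat{k},\hat{k}'\rangle|^2\leq(\beta+\gamma)(\beta'+\gamma')$, with shorthand $\alpha=\langle|A|k_{\lambda_1},k_{\lambda_1}\rangle$, $\beta=\langle|B|k_{\lambda_2},k_{\lambda_2}\rangle$, $\gamma=\langle|C|k_{\lambda_1},k_{\lambda_1}\rangle$, $\delta=\langle|D|k_{\lambda_2},k_{\lambda_2}\rangle$, and primed analogues involving $|A^*|,|B^*|,|C^*|,|D^*|$ acting on the $\mu$-kernels. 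Summing these two inequalities and applying Cauchy--Schwarz leads to
\[|\langle T_1\hat{k},\hat{k}'\rangle|^2+|\langle T_2\hat{k},\hat{k}'\rangle|^2 \leq \sqrt{(\alpha+\delta)^2+(\beta+\gamma)^2}\,\sqrt{(\alpha'+\delta')^2+(\beta'+\gamma')^2}.\]
The critical step is to expand each radicand and regroup so that terms sharing a common reproducing kernel are combined: $\alpha^2+\gamma^2=\|k_{\lambda_1}\|^4(\widetilde{|A|}(\lambda_1)^2+\widetilde{|C|}(\lambda_1)^2)\leq\|k_{\lambda_1}\|^4\,\textbf{ber}^2(|A|+i|C|)$, since $|\widetilde{|A|+i|C|}(\lambda_1)|^2$ is exactly that sum of squares, with the analogous bound $\beta^2+\delta^2\leq\|k_{\lambda_2}\|^4\,\textbf{ber}^2(|D|+i|B|)$. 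A further Cauchy--Schwarz on the mixed piece $\alpha\delta+\beta\gamma$ collapses the first radical to $\|k_{\lambda_1}\|^2\textbf{ber}(|A|+i|C|)+\|k_{\lambda_2}\|^2\textbf{ber}(|D|+i|B|)\leq\max\{\textbf{ber}(|A|+i|C|),\textbf{ber}(|D|+i|B|)\}$ via the normalization $\|k_{\lambda_1}\|^2+\|k_{\lambda_2}\|^2=1$. Repeating the argument on the second radical, but now pairing entries by $\mu$-index so that $|A^*|,|B^*|$ combine at $\mu_1$ and $|C^*|,|D^*|$ combine at $\mu_2$, produces the first term of the claim.

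For the cross term I would invoke Lemma \ref{lm6} with the parameter $\alpha=2$ (Buzano's inequality), applied to $x_1=T_1\hat{k}$, $x_2=T_2\hat{k}$ and $e=\hat{k}'$; after doubling this gives $2|\langle T_1\hat{k},\hat{k}'\rangle|\,|\langle T_2\hat{k},\hat{k}'\rangle|\leq\|T_1\hat{k}\|\|T_2\hat{k}\|+|\langle T_2^*T_1\hat{k},\hat{k}\rangle|$. The estimate $\|T_1\hat{k}\|\|T_2\hat{k}\|\leq\tfrac12\langle(T_1^*T_1+T_2^*T_2)\hat{k},\hat{k}\rangle$ together with the computation $T_1^*T_1+T_2^*T_2=\begin{bmatrix}|A|^2+|C|^2 & 0\\ 0 & |B|^2+|D|^2\end{bmatrix}$ and Lemma \ref{lm7}(i) gives the second term, while the identity $T_2^*T_1=\begin{bmatrix}0 & C^*D\\ B^*A & 0\end{bmatrix}$ combined with Corollary \ref{c28}(ii) gives the third term. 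Summing all three contributions and taking the supremum over $(\lambda_1,\lambda_2)$ and $(\mu_1,\mu_2)$ finishes the argument.

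The main obstacle is the regrouping described in the second paragraph: noticing that the refined quantity $\textbf{ber}(|A|+i|C|)$---rather than a coarser bound like $\sqrt{\||A|^2+|C|^2\|_{ber}}$---arises precisely when the Lemma \ref{lm8} outputs for the $A$-term and the $C$-term, both evaluated at the common kernel $k_{\lambda_1}$, are paired via the identity $\widetilde{|X|}(\lambda)^2+\widetilde{|Y|}(\lambda)^2=|\widetilde{|X|+i|Y|}(\lambda)|^2$, valid whenever $X,Y$ are self-adjoint. The ordering of operators inside each $\max$ in the first term is dictated by this pairing. The parameter $\alpha\in\mathbb{C}\setminus\{0\}$ stated in the theorem appears to be vestigial, since the displayed coefficients match the choice $\alpha=2$ in Buzano's inequality.
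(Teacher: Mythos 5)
Your proposal is correct and follows essentially the same route as the paper: the same splitting into diagonal plus off-diagonal parts, Lemma \ref{lm8} for the two squared terms, Cauchy--Schwarz to merge them via the identity $\langle P\hat{k},\hat{k}\rangle^2+\langle R\hat{k},\hat{k}\rangle^2=|\langle (P+iR)\hat{k},\hat{k}\rangle|^2$ for positive $P,R$, and Buzano's inequality for the cross term, finishing with Lemma \ref{lm7}(i) and Corollary \ref{c28}(ii). The only cosmetic difference is that you unpack Lemma \ref{lm8} and the $|X|+i|Y|$ regrouping at the scalar level entry by entry, whereas the paper applies them directly to the block operators $M=\mathrm{diag}(A,D)$ and $N$; your closing observation that the parameter $\alpha$ in the statement is vestigial is also accurate.
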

\begin{proof}
	Let $M=\begin{bmatrix}
		A & 0\\
		0 & D
	\end{bmatrix},$
 $N=\begin{bmatrix}
	0 & B\\
	C & 0
\end{bmatrix},$
$P=\begin{bmatrix}
	|A| & 0\\
	0 & |D|
\end{bmatrix},$
$Q=\begin{bmatrix}
	|A^*| & 0\\
	0 & |D^*|
\end{bmatrix},$
$R=\begin{bmatrix}
	|C| & 0\\
	0 & |B|
\end{bmatrix},$
$S=\begin{bmatrix}
	|B^*| & 0\\
	0 & |C^*|
\end{bmatrix}$ and
$T=\begin{bmatrix}
	0 & C^*D\\
	B^*A & 0
\end{bmatrix}.$
For $(\lambda_1,\lambda_2),(\mu_1,\mu_2)\in \Omega^2,$ let	$\hat{k}_{(\lambda_1,\lambda_2)}
$ and $\hat{k}_{(\mu_1,\mu_2)}
$ be two normalized reproducing kernel of $\mathcal{H} \oplus \mathcal{H}.$ 
 Therefore, $P=|M|,Q=|M^*|,R=|N|,S=|N^*|$ and $P^2+R^2=\begin{bmatrix}
 	|A|^2+|C|^2 & 0\\
 	0 & |D|^2+|B|^2
 \end{bmatrix},$
$P+iR=\begin{bmatrix}
	|A|+i|C|& 0\\
	0 & |D|+i|B|
\end{bmatrix}.$
 Then
 \begin{eqnarray*}
 	&&\Bigg|\Bigg\langle\begin{bmatrix}
 		A & B\\
 		C & D
 	\end{bmatrix}\hat{k}_{(\lambda_1,\lambda_2)},\hat{k}_{(\mu_1,\mu_2)}\Bigg\rangle\Bigg|^2\\
 &=& \left|\left\langle M\hat{k}_{(\lambda_1,\lambda_2)},\hat{k}_{(\mu_1,\mu_2)}\right\rangle+\left\langle N\hat{k}_{(\lambda_1,\lambda_2)},\hat{k}_{(\mu_1,\mu_2)}\right\rangle\right|^2\\
 &\leq&\Big[\left|\left\langle M\hat{k}_{(\lambda_1,\lambda_2)},\hat{k}_{(\mu_1,\mu_2)}\right\rangle\right|+\left|\left\langle N\hat{k}_{(\lambda_1,\lambda_2)},\hat{k}_{(\mu_1,\mu_2)}\right\rangle\right|\Big]^2\\
 &=&\left|\left\langle M\hat{k}_{(\lambda_1,\lambda_2)},\hat{k}_{(\mu_1,\mu_2)}\right\rangle\right|^2+\left|\left\langle N\hat{k}_{(\lambda_1,\lambda_2)},\hat{k}_{(\mu_1,\mu_2)}\right\rangle\right|^2\\
 &&+2\left|\left\langle M\hat{k}_{(\lambda_1,\lambda_2)},\hat{k}_{(\mu_1,\mu_2)}\right\rangle\right| \left|\left\langle \hat{k}_{(\mu_1,\mu_2)},N\hat{k}_{(\lambda_1,\lambda_2)}\right\rangle\right|\\
 &\leq& \left\langle |M|\hat{k}_{(\lambda_1,\lambda_2)},\hat{k}_{(\lambda_1,\lambda_2)}\right\rangle \left\langle |M^*|\hat{k}_{(\mu_1,\mu_2)},\hat{k}_{(\mu_1,\mu_2)}\right\rangle\\
 &&+\left\langle |N|\hat{k}_{(\lambda_1,\lambda_2)},\hat{k}_{(\lambda_1,\lambda_2)}\right\rangle \left\langle |N^*|\hat{k}_{(\mu_1,\mu_2)},\hat{k}_{(\mu_1,\mu_2)}\right\rangle
 + \left\|M\hat{k}_{(\lambda_1,\lambda_2)}\right\|\left\|N\hat{k}_{(\lambda_1,\lambda_2)}\right\|\\
 &&+\left|\left\langle M\hat{k}_{(\lambda_1,\lambda_2)},N\hat{k}_{(\lambda_1,\lambda_2)}\right\rangle\right|    \,\,\,\,\,\,\,\,\,\,\,\,     \Big(\mbox{by Lemma \ref{lm6} and Lemma \ref{lm8}}\Big)\\ 
 &=& \left\langle P\hat{k}_{(\lambda_1,\lambda_2)},\hat{k}_{(\lambda_1,\lambda_2)}\right\rangle \left\langle Q\hat{k}_{(\mu_1,\mu_2)},\hat{k}_{(\mu_1,\mu_2)}\right\rangle+\left\langle R\hat{k}_{(\lambda_1,\lambda_2)},\hat{k}_{(\lambda_1,\lambda_2)}\right\rangle \left\langle S\hat{k}_{(\mu_1,\mu_2)},\hat{k}_{(\mu_1,\mu_2)}\right\rangle\\
 &&+\left\langle P^2\hat{k}_{(\lambda_1,\lambda_2)},\hat{k}_{(\lambda_1,\lambda_2)}\right\rangle^{\frac{1}{2}}\left\langle R^2\hat{k}_{(\lambda_1,\lambda_2)},\hat{k}_{(\lambda_1,\lambda_2)}\right\rangle^{\frac{1}{2}}+\left|\left\langle T\hat{k}_{(\lambda_1,\lambda_2)},\hat{k}_{(\lambda_1,\lambda_2)}\right\rangle\right|\\
 &\leq& \Big(\left\langle P\hat{k}_{(\lambda_1,\lambda_2)},\hat{k}_{(\lambda_1,\lambda_2)}\right\rangle^2+\left\langle R\hat{k}_{(\lambda_1,\lambda_2)},\hat{k}_{(\lambda_1,\lambda_2)}\right\rangle^2\Big)^{\frac{1}{2}}\Big(\left\langle Q\hat{k}_{(\mu_1,\mu_2)},\hat{k}_{(\mu_1,\mu_2)}\right\rangle^2\\
 &&+\left\langle S\hat{k}_{(\mu_1,\mu_2)},\hat{k}_{(\mu_1,\mu_2)}\right\rangle^2\Big)^{\frac{1}{2}}
 +\frac{1}{2}\left\langle (P^2+R^2)\hat{k}_{(\lambda_1,\lambda_2)},\hat{k}_{(\lambda_1,\lambda_2)}\right\rangle+\left|\left\langle T\hat{k}_{(\lambda_1,\lambda_2)},\hat{k}_{(\lambda_1,\lambda_2)}\right\rangle\right|\\
 &=&\left|\left\langle P\hat{k}_{(\lambda_1,\lambda_2)},\hat{k}_{(\lambda_1,\lambda_2)}\right\rangle+i\left\langle R\hat{k}_{(\lambda_1,\lambda_2)},\hat{k}_{(\lambda_1,\lambda_2)}\right\rangle\right| \left|\left\langle Q\hat{k}_{(\mu_1,\mu_2)},\hat{k}_{(\mu_1,\mu_2)}\right\rangle+i\left\langle S\hat{k}_{(\mu_1,\mu_2)},\hat{k}_{(\mu_1,\mu_2)}\right\rangle\right|\\
  &&+\frac{1}{2}\left\langle (P^2+R^2)\hat{k}_{(\lambda_1,\lambda_2)},\hat{k}_{(\lambda_1,\lambda_2)}\right\rangle+\left|\left\langle T\hat{k}_{(\lambda_1,\lambda_2)},\hat{k}_{(\lambda_1,\lambda_2)}\right\rangle\right|\\
  &=&\left|\left\langle(P+iR)\hat{k}_{(\lambda_1,\lambda_2)},\hat{k}_{(\lambda_1,\lambda_2)}\right\rangle\right|\left|\left\langle (Q+iS)\hat{k}_{(\mu_1,\mu_2)},\hat{k}_{(\mu_1,\mu_2)}\right\rangle\right|\\
  &&+\frac{1}{2}\left\langle (P^2+R^2)\hat{k}_{(\lambda_1,\lambda_2)},\hat{k}_{(\lambda_1,\lambda_2)}\right\rangle+\left|\left\langle T\hat{k}_{(\lambda_1,\lambda_2)},\hat{k}_{(\lambda_1,\lambda_2)}\right\rangle\right|.
\end{eqnarray*}
 Taking the supremum over all $(\lambda_1,\lambda_2),(\mu_1,\mu_2)\in\Omega^2,$ we get
  \begin{eqnarray}\label{inq7}
\left\|
	\begin{bmatrix}
		A & B\\
		C & D
	\end{bmatrix}
\right\|_{ber}^2\leq \textbf{ber}(P+iR)\textbf{ber}(Q+iS)+\frac{1}{2}\textbf{ber}(P^2+R^2)+\textbf{ber}(T).
  \end{eqnarray}
By using Lemma \ref{lm7} (i) and Corollary \ref{c28} (ii) in (\ref{inq7}), we get the desired inequality.
\end{proof}

\begin{remark}
Following \cite[Theorem 3.6]{BHLS_RMJ_2021}, for the case $\alpha=\frac{1}{2},$ we have 
\begin{eqnarray}\label{ee5}
	\textbf{ber}\begin{pmatrix}
		\begin{bmatrix}
			A & B\\
			C & D
	\end{bmatrix}\end{pmatrix}
&\leq& \frac{1}{2} \textbf{ber}(D)+\textbf{ber}(A)\nonumber\\
&&+\frac{1}{2}\sqrt{\frac14\textbf{ber}^2(D)+\|C\|^2}+\frac{1}{2}\sqrt{\frac14\textbf{ber}^2(D)+\|B\|^2}.
\end{eqnarray}
If we take $\mathcal{H}=\mathbb{C}^2,A=C^*=\begin{bmatrix}
	1 & 1\\
	0 & 0
\end{bmatrix}$ and $B=D^*=\begin{bmatrix}
0 & 0\\
1 & 1
\end{bmatrix}$ then by simple computation from \eqref{ee5} we have $\textbf{ber}\begin{pmatrix}
\begin{bmatrix}
	A & B\\
	C & D
	\end{bmatrix}\end{pmatrix}
	\leq 3,$ whereas from Theorem \ref{th7} we get $\textbf{ber}\begin{pmatrix}
		\begin{bmatrix}
			A & B\\
			C & D
	\end{bmatrix}\end{pmatrix}
	\leq 2.4.$ Hence for this example the bound obtained in Theorem \ref{th7} is better than the existing bound \eqref{ee5}.
\end{remark}

Next we need the following lemmas to prove our next result.

\begin{lemma}\cite{VK_MB_71}\label{lm9}
	For $i=1,2,\ldots,n$, let $a_i$ be a positive real number. Then 
	\[\left( \sum_{i=1}^na_i\right)^r \leq n^{r-1}\sum_{i=1}^na_i^r,\] for all $r\geq 1$.
\end{lemma}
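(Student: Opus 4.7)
The plan is to prove this as a direct consequence of the convexity of the power function $\phi(t)=t^r$ on $[0,\infty)$ for $r\geq 1$, which is equivalent to the usual power mean inequality. If $r=1$ the statement is an equality, so I will assume $r>1$.

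First, I would record that $\phi(t)=t^r$ is convex on $[0,\infty)$ because its second derivative $r(r-1)t^{r-2}$ is nonnegative for $r\geq 1$. Applying Jensen's inequality with equal weights $1/n$ to the data $a_1,\ldots,a_n\geq 0$ gives
\[
\phi\!\left(\frac{1}{n}\sum_{i=1}^n a_i\right)\leq \frac{1}{n}\sum_{i=1}^n \phi(a_i),
\]
that is,
\[
\frac{1}{n^r}\left(\sum_{i=1}^n a_i\right)^{\!r}\leq \frac{1}{n}\sum_{i=1}^n a_i^r.
\]
Multiplying both sides by $n^r$ yields the desired bound $\left(\sum_{i=1}^n a_i\right)^r\leq n^{r-1}\sum_{i=1}^n a_i^r$.

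As an alternative (essentially the same content), I could invoke H\"older's inequality with conjugate exponents $r$ and $s=r/(r-1)$ applied to the sequences $(a_i)$ and $(1,1,\ldots,1)$:
\[
\sum_{i=1}^n a_i\cdot 1\leq \left(\sum_{i=1}^n a_i^r\right)^{\!1/r}\!\left(\sum_{i=1}^n 1^{s}\right)^{\!1/s}=n^{(r-1)/r}\left(\sum_{i=1}^n a_i^r\right)^{\!1/r},
\]
and raising to the $r$-th power gives the claim. There is no real obstacle here; the only thing to check is that the case $r=1$ (where equality holds) is covered, and that the convexity/H\"older argument uses only $r\geq 1$ and nonnegativity of the $a_i$, both of which are in the hypotheses.
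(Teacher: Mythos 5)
Your argument is correct: the lemma is exactly the statement of Jensen's inequality for the convex function $t\mapsto t^r$ ($r\geq 1$) with equal weights, and both your convexity route and your H\"older route are valid, with the trivial case $r=1$ properly noted. The paper itself offers no proof of this lemma --- it is quoted from the reference \cite{VK_MB_71} as a known elementary inequality --- so there is nothing to reconcile; your write-up supplies a complete and standard justification.
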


\begin{lemma}\cite[Lemma 2.10]{Bak_CMJ_2018}\label{lm10}
	Let $A,B\in \mathbb{B}(\mathcal{H}).$ If $\textbf{ber}\begin{pmatrix}\begin{bmatrix}
		A&0\\
		0&0
	\end{bmatrix}\end{pmatrix} \leq \textbf{ber}\begin{pmatrix}\begin{bmatrix}
		B&0\\
		0&0
	\end{bmatrix}\end{pmatrix},$ then $\textbf{ber}(A) \leq \textbf{ber}(B).$
\end{lemma}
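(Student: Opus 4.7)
The plan is to reduce the statement to the cleaner identity
\begin{equation*}
\textbf{ber}\begin{pmatrix}\begin{bmatrix} X & 0\\ 0 & 0\end{bmatrix}\end{pmatrix} = \textbf{ber}(X) \qquad \text{for every } X\in\mathbb{B}(\mathcal{H}).
\end{equation*}
Once this identity is in hand, the lemma becomes a one-line consequence: applying it with $X=A$ and then with $X=B$ turns the hypothesis $\textbf{ber}\begin{pmatrix}\begin{bmatrix} A&0\\0&0\end{bmatrix}\end{pmatrix}\leq \textbf{ber}\begin{pmatrix}\begin{bmatrix} B&0\\0&0\end{bmatrix}\end{pmatrix}$ directly into the desired conclusion $\textbf{ber}(A)\leq \textbf{ber}(B)$.

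To establish the identity I would argue by two matching inequalities. The upper bound $\leq$ is immediate from Lemma \ref{lm7}(i) with $D=0$: since $\textbf{ber}(0)=0$, we get $\textbf{ber}\begin{pmatrix}\begin{bmatrix} X&0\\0&0\end{bmatrix}\end{pmatrix}\leq \max\{\textbf{ber}(X),0\}=\textbf{ber}(X)$. For the reverse inequality, for each $\lambda\in\Omega$ I would consider the unit vector $\widehat{K}_\lambda := \begin{bmatrix} \hat{k}_\lambda \\ 0 \end{bmatrix}\in \mathcal{H}\oplus\mathcal{H}$, which arises as the normalized reproducing kernel of the direct-sum RKHS attached to a point in the first factor. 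A direct computation gives
\begin{equation*}
\left\langle \begin{bmatrix} X & 0\\ 0 & 0\end{bmatrix} \widehat{K}_\lambda, \widehat{K}_\lambda\right\rangle = \langle X\hat{k}_\lambda,\hat{k}_\lambda\rangle = \widetilde{X}(\lambda),
\end{equation*}
and taking the supremum over $\lambda\in\Omega$ yields $\textbf{ber}\begin{pmatrix}\begin{bmatrix} X&0\\0&0\end{bmatrix}\end{pmatrix}\geq \textbf{ber}(X)$. Combining the two bounds produces the identity.

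The only delicate point I anticipate is in the reverse-inequality step, namely the justification that vectors of the degenerate form $(\hat{k}_\lambda,0)$ are admissible as normalized reproducing kernels of $\mathcal{H}\oplus\mathcal{H}$ in the supremum defining $\textbf{ber}$. This amounts to fixing the appropriate RKHS interpretation of the direct sum so that reproducing kernels attached to a single factor are included; this is the standard convention adopted in \cite{Bak_CMJ_2018}, and once it is in place the argument is entirely mechanical and the lemma follows at once.
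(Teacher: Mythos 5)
There is a genuine gap, and it sits exactly at the point you flagged and then waved away. First, note that the paper does not prove this lemma at all — it is imported verbatim from \cite[Lemma 2.10]{Bak_CMJ_2018} — so there is no in-paper argument to match; your proposal has to stand on its own. It does not, because the identity $\textbf{ber}\begin{pmatrix}\begin{bmatrix} X & 0\\ 0 & 0\end{bmatrix}\end{pmatrix}=\textbf{ber}(X)$ on which the whole reduction rests is false under the convention this paper actually uses. In the proofs of Theorems \ref{th4} and \ref{th8} the normalized reproducing kernels of the direct sum are the genuine ones, $\hat{k}_{(\lambda_1,\lambda_2)}=\bigl(k_{\lambda_1},k_{\lambda_2}\bigr)$ with $\|k_{\lambda_1}\|^2+\|k_{\lambda_2}\|^2=1$ and both components nonzero; the degenerate vector $(\hat{k}_\lambda,0)$ is a unit vector but not a normalized reproducing kernel, so the inequality $\bigl|\bigl\langle \operatorname{diag}(X,0)(\hat{k}_\lambda,0),(\hat{k}_\lambda,0)\bigr\rangle\bigr|\leq \textbf{ber}\bigl(\operatorname{diag}(X,0)\bigr)$ is precisely the step that distinguishes $w$ from $\textbf{ber}$ and is not available. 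What one actually gets is
\begin{equation*}
\textbf{ber}\begin{pmatrix}\begin{bmatrix} X & 0\\ 0 & 0\end{bmatrix}\end{pmatrix}=\sup_{\lambda_1,\lambda_2}\ \bigl|\widetilde{X}(\lambda_1)\bigr|\,\frac{\|k_{\lambda_1}\|^2}{\|k_{\lambda_1}\|^2+\|k_{\lambda_2}\|^2},
\end{equation*}
which is only $\leq \textbf{ber}(X)$, with a gap that can be strict.

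A concrete failure: on the Hardy space ${H}^2(\mathbb{D})$ take $X=P_{\mathbb{C}}$, so $\langle P_{\mathbb{C}}k_{\lambda},k_{\lambda}\rangle=1$ and $\|k_{\lambda}\|^2=(1-|\lambda|^2)^{-1}\geq 1$; then $\textbf{ber}(P_{\mathbb{C}})=1$ while $\textbf{ber}\begin{pmatrix}\begin{bmatrix} P_{\mathbb{C}} & 0\\ 0 & 0\end{bmatrix}\end{pmatrix}=\sup_{\lambda_1,\lambda_2}\bigl(\|k_{\lambda_1}\|^2+\|k_{\lambda_2}\|^2\bigr)^{-1}=\tfrac{1}{2}$. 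The same dilution effect is visible in the paper's own example following Corollary \ref{c28}, where $\|\operatorname{diag}(P_{\mathbb{C}},P_z)\|_{ber}=0.536$ is strictly below $\max\{\|P_{\mathbb{C}}\|_{ber},\|P_z\|_{ber}\}=1$. So your upper bound via Lemma \ref{lm7}(i) is fine, but the reverse inequality fails and the lemma cannot be obtained as a one-line consequence of the proposed identity; any correct proof has to engage with the specific argument (and kernel convention) of \cite{Bak_CMJ_2018} rather than reduce to this identity.
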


\begin{lemma}\label{lm11}
	Let $A_i\in \mathbb{B}(\mathcal{H})$ be positive operators, for all $i=1,2,\ldots,n.$ Then 
	\begin{eqnarray*}
		\left\|\sum_{i=1}^n A_i\right\|^r_{ber} \leq n^{r-1}\left\|\sum_{i=1}^nA^r_i\right\|_{ber},
	\end{eqnarray*}
	for all $r \geq 1.$
\end{lemma}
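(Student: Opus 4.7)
The plan is to reduce the inequality to a pointwise estimate on normalized reproducing kernels, then combine the two scalar lemmas at our disposal. The key preparatory observation is that whenever $T \in \mathbb{B}(\mathcal{H})$ is positive one has $\|T\|_{ber} = \textbf{ber}(T)$ by Proposition 2.11 of \cite{BP_sen_CAOT}. Since each $A_i$ is positive, both $\sum_{i=1}^n A_i$ and $\sum_{i=1}^n A_i^r$ are positive, so throughout the argument the Berezin norm coincides with the Berezin number and we may work with $\langle \cdot \hat k_\lambda, \hat k_\lambda\rangle$.

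Fix an arbitrary normalized reproducing kernel $\hat k_\lambda$ and set $a_i := \langle A_i \hat k_\lambda, \hat k_\lambda\rangle \geq 0$. First I would write
\[
\Bigl\langle \Bigl(\sum_{i=1}^n A_i\Bigr) \hat k_\lambda, \hat k_\lambda \Bigr\rangle^r = \Bigl(\sum_{i=1}^n a_i\Bigr)^r,
\]
and apply Lemma \ref{lm9} to conclude $\bigl(\sum a_i\bigr)^r \leq n^{r-1} \sum a_i^r$. Next, since each $A_i$ is positive and $\|\hat k_\lambda\| = 1$, Lemma \ref{lm1} yields $a_i^r = \langle A_i \hat k_\lambda, \hat k_\lambda\rangle^r \leq \langle A_i^r \hat k_\lambda, \hat k_\lambda\rangle$ for every $r \geq 1$. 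Summing in $i$ gives
\[
\Bigl\langle \Bigl(\sum_{i=1}^n A_i\Bigr) \hat k_\lambda, \hat k_\lambda \Bigr\rangle^r \leq n^{r-1} \Bigl\langle \Bigl(\sum_{i=1}^n A_i^r\Bigr) \hat k_\lambda, \hat k_\lambda \Bigr\rangle \leq n^{r-1} \Bigl\|\sum_{i=1}^n A_i^r\Bigr\|_{ber},
\]
where the last step uses that $\sum A_i^r$ is positive so its Berezin number equals its Berezin norm.

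Finally I would take the supremum of the left-hand side over all $\lambda \in \Omega$. Because $\sum A_i$ is positive, $\sup_\lambda \langle (\sum A_i) \hat k_\lambda, \hat k_\lambda\rangle = \textbf{ber}(\sum A_i) = \|\sum A_i\|_{ber}$, which delivers the stated bound. There is no genuine obstacle here; the only subtlety is remembering to invoke the positivity-based equality $\|T\|_{ber} = \textbf{ber}(T)$ on both sides so that the pointwise estimate on diagonal Berezin values actually controls the full Berezin norm.
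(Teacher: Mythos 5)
Your proof is correct and follows essentially the same route as the paper: the pointwise estimate on $\langle\,\cdot\,\hat k_\lambda,\hat k_\lambda\rangle$ via Lemma \ref{lm9} followed by Lemma \ref{lm1}, then a supremum over $\lambda$. You are in fact slightly more careful than the paper in explicitly invoking $\|T\|_{ber}=\textbf{ber}(T)$ for positive $T$ to justify that the supremum of the diagonal Berezin values recovers the full Berezin norm on the left-hand side, a point the paper leaves implicit.
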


\begin{proof}
	Let $\hat{k}_{\lambda}$ be a normalized reproducing kernel of $\mathcal{H}.$ Then using Lemma \ref{lm1} and Lemma \ref{lm9}, we obtain
	\begin{eqnarray*}
		\left\langle \sum_{i=1}^nA_i\hat{k}_{\lambda},\hat{k}_{\lambda} \right\rangle^r \leq n^{r-1}\sum_{i=1}^n\langle A_i \hat{k}_{\lambda},\hat{k}_{\lambda} \rangle^r \leq n^{r-1}\sum_{i=1}^n\langle A^r_i \hat{k}_{\lambda},\hat{k}_{\lambda} \rangle \leq n^{r-1}\left\|\sum_{i=1}^nA^r_i\right\|_{ber}.
	\end{eqnarray*}
	So, taking the supremum over all $\lambda\in\Omega$, we get the desired result.
\end{proof}

Now, we are in a position to prove the following result.
\begin{theorem}\label{th9}
	Let $A_i, B_i, X_i\in \mathbb{B}(\mathcal{H})$ for all $i=1,2,\ldots,n.$ Then 
	\begin{eqnarray*}
		\textbf{ber}^r\left(\sum_{i=1}^n A^*_iX_iB_i\right) \leq \frac{n^{r-1}}{2^r}\left(\max_{1\leq i \leq n}\|X_i\|^r\right)\left\|\sum_{i=1}^n\left(A_i^*A_i+B_i^*B_i\right)^r\right\|_{ber},
	\end{eqnarray*}
	for all $r \geq 1.$
\end{theorem}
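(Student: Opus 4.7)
The plan is to first reduce to a pointwise bound at a normalized reproducing kernel $\hat{k}_\lambda$, and then raise to the $r$-th power and invoke Lemma~\ref{lm11} (which is the exact tool needed to pass an $r$-th power inside a sum of positive operators under the Berezin norm).

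First I would fix a normalized reproducing kernel $\hat{k}_\lambda$ and use the adjoint identity $\langle A_i^*X_iB_i\hat{k}_\lambda,\hat{k}_\lambda\rangle=\langle X_iB_i\hat{k}_\lambda,A_i\hat{k}_\lambda\rangle$ together with the triangle inequality to get
\[
\Bigl|\Bigl\langle\sum_{i=1}^n A_i^*X_iB_i\hat{k}_\lambda,\hat{k}_\lambda\Bigr\rangle\Bigr|
\leq\sum_{i=1}^n|\langle X_iB_i\hat{k}_\lambda,A_i\hat{k}_\lambda\rangle|.
\]
Then Cauchy--Schwarz and the operator-norm bound on $X_i$ give
$|\langle X_iB_i\hat{k}_\lambda,A_i\hat{k}_\lambda\rangle|\le \|X_i\|\,\|B_i\hat{k}_\lambda\|\,\|A_i\hat{k}_\lambda\|$, after which the AM--GM inequality $\|B_i\hat{k}_\lambda\|\|A_i\hat{k}_\lambda\|\le\tfrac12(\|A_i\hat{k}_\lambda\|^2+\|B_i\hat{k}_\lambda\|^2)$ converts the right-hand side into a single inner product with the positive operator $A_i^*A_i+B_i^*B_i$.

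At that point I would pull out $\max_i\|X_i\|$ to obtain
\[
\Bigl|\Bigl\langle\sum_{i=1}^n A_i^*X_iB_i\hat{k}_\lambda,\hat{k}_\lambda\Bigr\rangle\Bigr|
\leq\tfrac12\bigl(\max_{1\le i\le n}\|X_i\|\bigr)\,\Bigl\langle\sum_{i=1}^n(A_i^*A_i+B_i^*B_i)\hat{k}_\lambda,\hat{k}_\lambda\Bigr\rangle.
\]
Raising to the $r$-th power (permissible since $r\ge 1$ and both sides are nonnegative) and taking the supremum over $\lambda\in\Omega$ yields
\[
\textbf{ber}^r\Bigl(\sum_{i=1}^n A_i^*X_iB_i\Bigr)
\leq\frac{1}{2^r}\bigl(\max_{1\le i\le n}\|X_i\|^r\bigr)\,\Bigl\|\sum_{i=1}^n(A_i^*A_i+B_i^*B_i)\Bigr\|_{ber}^{\,r}.
\]
Finally I would invoke Lemma~\ref{lm11} applied to the positive operators $A_i^*A_i+B_i^*B_i$ to replace $\|\sum(A_i^*A_i+B_i^*B_i)\|_{ber}^r$ by $n^{r-1}\|\sum(A_i^*A_i+B_i^*B_i)^r\|_{ber}$, producing the claimed inequality.

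The proof is essentially routine once the correct tools are lined up; the only subtle point is recognizing that the factor $n^{r-1}$ in the conclusion is supplied by Lemma~\ref{lm11} and not by an earlier application of the power-mean inequality to the $n$-term sum at the kernel level. If one instead applies $(\sum a_i)^r\le n^{r-1}\sum a_i^r$ first and then Lemma~\ref{lm1} termwise, one would also need to absorb the resulting $\sum\|X_i\|^r$ into $n\max_i\|X_i\|^r$, which worsens the constant; so the order of operations above (postpone the $n^{r-1}$ step to the Berezin-norm level via Lemma~\ref{lm11}) is what makes the stated bound come out cleanly.
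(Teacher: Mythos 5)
Your proof is correct, and it reaches the stated bound, but it follows a genuinely different route from the paper's. The paper dilates the problem to the $n$-fold direct sum: it builds the block operators $A=[A_i\ 0\ \cdots\ 0]^{\mathrm{col}}$, $B=[B_i\ 0\ \cdots\ 0]^{\mathrm{col}}$ and $X=\operatorname{diag}(X_1,\ldots,X_n)$, so that $A^*XB$ has $\sum_{i=1}^n A_i^*X_iB_i$ as its $(1,1)$ corner, applies one global Cauchy--Schwarz and AM--GM step to get $\textbf{ber}(A^*XB)\leq\tfrac{\|X\|}{2}\,\textbf{ber}(A^*A+B^*B)$, and then needs Lemma~\ref{lm10} to descend from the corner of the dilated operator back to $\mathcal{H}$ (this descent is not automatic, because the first component $k_{\lambda_1}$ of a normalized kernel of the direct sum is only sub-normalized in $\mathcal{H}$). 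You avoid the dilation and Lemma~\ref{lm10} entirely by splitting the sum termwise with the triangle inequality and applying Cauchy--Schwarz and AM--GM to each summand; your intermediate bound $\sum_i\|X_i\|\|B_i\hat{k}_\lambda\|\|A_i\hat{k}_\lambda\|$ is in fact no weaker than the paper's global Cauchy--Schwarz bound, and after AM--GM the two arguments merge at the same inequality $\textbf{ber}\bigl(\sum_i A_i^*X_iB_i\bigr)\leq\tfrac12\bigl(\max_i\|X_i\|\bigr)\bigl\|\sum_i(A_i^*A_i+B_i^*B_i)\bigr\|_{ber}$, both then finishing identically with Lemma~\ref{lm11}. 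Your approach is the more elementary and self-contained of the two; the paper's dilation template is the one it reuses elsewhere for related statements. Your closing remark about where the factor $n^{r-1}$ should enter is accurate: the paper likewise postpones it to the final application of Lemma~\ref{lm11} rather than invoking the power-mean inequality at the kernel level.
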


\begin{proof}
	Consider the operator matrices \\
	
	$A=\begin{bmatrix}
		A_1&0&\ldots &0\\
		A_2&0&\ldots &0\\
		\vdots&\vdots&\vdots\\
		A_n&0&\ldots &0
	\end{bmatrix},$
	$B=\begin{bmatrix}
		B_1&0&\ldots &0\\
		B_2&0&\ldots &0\\
		\vdots&\vdots&\vdots\\
		B_n&0&\ldots &0
	\end{bmatrix}$ and $X=\begin{bmatrix}
		X_1&0&\ldots &0\\
		0&X_2&\ldots &0\\
		\vdots&\vdots&\vdots\\
		0&0&\ldots &X_n
	\end{bmatrix}.$ \\
	
	Then it is easy to verify that $A^*XB=\begin{bmatrix}
		\sum_{i=1}^n A^*_iX_iB_i&0&\ldots &0\\
		0&0&\ldots &0\\
		\vdots&\vdots&\vdots\\
		0&0&\ldots &0
	\end{bmatrix}.$
	
	Therefore, we have 
	\begin{eqnarray*}
		&&\textbf{ber}\begin{pmatrix}\begin{bmatrix}
			\sum_{i=1}^n A^*_iX_iB_i&0\\
			0&0
		\end{bmatrix}\end{pmatrix}\\
		&&=\textbf{ber}(A^*XB)\\
		&& =\sup\{|\langle A^*XB \hat{k}_{(\lambda_1,\lambda_2,\ldots,\lambda_n)},\hat{k}_{(\lambda_1,\lambda_2,\ldots,\lambda_n)}\rangle| : (\lambda_1,\lambda_2,\ldots,\lambda_n) \in \Omega^n\}\\
		&& =\sup\{|\langle XB \hat{k}_{(\lambda_1,\lambda_2,\ldots,\lambda_n)},A\hat{k}_{(\lambda_1,\lambda_2,\ldots,\lambda_n)}\rangle| : (\lambda_1,\lambda_2,\ldots,\lambda_n) \in \Omega^n\}\\
		&& \leq \|X\|\sup\{\| B \hat{k}_{(\lambda_1,\lambda_2,\ldots,\lambda_n)}\|\|A\hat{k}_{(\lambda_1,\lambda_2,\ldots,\lambda_n)}\| : (\lambda_1,\lambda_2,\ldots,\lambda_n) \in \Omega^n\}\\
		&& \leq \frac{\|X\|}{2}\sup\{\| B \hat{k}_{(\lambda_1,\lambda_2,\ldots,\lambda_n)}\|^2+\|A\hat{k}_{(\lambda_1,\lambda_2,\ldots,\lambda_n)}\|^2 : (\lambda_1,\lambda_2,\ldots,\lambda_n) \in \Omega^n\}\\
		&& = \frac{\|X\|}{2}\sup\{\langle (A^*A+B^*B) \hat{k}_{(\lambda_1,\lambda_2,\ldots,\lambda_n)},\hat{k}_{(\lambda_1,\lambda_2,\ldots,\lambda_n)}\rangle: (\lambda_1,\lambda_2,\ldots,\lambda_n) \in \Omega^n\}\\
		&&=\frac{1}{2}\left(\max_{1\leq i \leq n}\|X_i\|\right)\textbf{ber}(A^*A+B^*B)\\
		&&=\frac{1}{2}\left(\max_{1\leq i \leq n}\|X_i\|\right)\textbf{ber}\begin{pmatrix}\begin{bmatrix}
			\sum_{i=1}^n (A^*_iA_i+B^*_iB_i)&0\\
			0&0
		\end{bmatrix}\end{pmatrix}.
	\end{eqnarray*}
	
	By using Lemma \ref{lm10}, we have
	
	\begin{eqnarray*}
		\textbf{ber}\left(\sum_{i=1}^n A^*_iX_iB_i\right) \leq \frac{1}{2}\left(\max_{1\leq i \leq n}\|X_i\|\right)\textbf{ber}\left(\sum_{i=1}^n (A^*_iA_i+B^*_iB_i)\right).
	\end{eqnarray*}

	Now, by using Lemma \ref{lm11}, we obtain
	\begin{eqnarray*}
		\textbf{ber}^r\left(\sum_{i=1}^n A^*_iX_iB_i\right) &&\leq \frac{1}{2^r}\left(\max_{1\leq i \leq n}\|X_i\|^r\right)\left\|\sum_{i=1}^n (A^*_iA_i+B^*_iB_i)\right\|^r_{ber}\\
		&& \leq \frac{n^{r-1}}{2^r}\left(\max_{1\leq i \leq n}\|X_i\|^r\right)\left\|\sum_{i=1}^n\left(A_i^*A_i+B_i^*B_i\right)^r\right\|_{ber},
	\end{eqnarray*}
	as desired.
\end{proof}
The following corollaries are immediate from Theorem \ref{th9}.
\begin{cor}\label{cot9}
	Let $A_i,B_i \in \mathbb{B}(\mathcal{H}),$ for all $i=1,2,\ldots,n.$ Then 
\begin{eqnarray*}
	&&(i)~~\textbf{ber}^r\left(\sum_{i=1}^n A_iB_i\right) \leq \frac{n^{r-1}}{2^r}\left\|\sum_{i=1}^n\left(A_iA_i^*+B_i^*B_i\right)^r\right\|_{ber} \mbox{for all $r \geq 1,$}\\
	&&(ii)~~ \textbf{ber}^r\left(\sum_{i=1}^n A_i\right) \leq \frac{n^{r-1}}{2^r}\min \left\{\left\|\sum_{i=1}^n\left(A_iA_i^*+I\right)^r\right\|_{ber},\left\|\sum_{i=1}^n\left(A^*_iA_i+I\right)^r\right\|_{ber}\right\}\\
	&&\,\,\,\,\,\,\,\,\,\,~~\mbox{for all $r \geq 1,$}\\
	&&(iii)~~ \textbf{ber}\left(\sum_{i=1}^n A_iB_i\right) \leq \frac{1}{2}\left\|\sum_{i=1}^n\left(A_iA_i^*+B_i^*B_i\right)\right\|_{ber},\\
	&&(iv)~~ \textbf{ber}\left(A_{1}B_{1}+A_{2}B_{2}\right) \leq \frac{1}{2}\left\|A_{1}A_{1}^*+B_{1}^*B_{1}+A_{2}A_{2}^*+B_{2}^*B_{2}\right\|_{ber}.
\end{eqnarray*}
\end{cor}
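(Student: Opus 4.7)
The plan is to derive all four inequalities of Corollary \ref{cot9} as specializations of Theorem \ref{th9}, choosing the operators $A_i$, $B_i$, $X_i$ and the exponent $r$ appropriately in each case. Recall that Theorem \ref{th9} reads
\begin{eqnarray*}
\textbf{ber}^r\left(\sum_{i=1}^n A_i^*X_iB_i\right) \leq \frac{n^{r-1}}{2^r}\left(\max_{1\leq i\leq n}\|X_i\|^r\right)\left\|\sum_{i=1}^n\left(A_i^*A_i+B_i^*B_i\right)^r\right\|_{ber}.
\end{eqnarray*}

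For part (i), I would substitute $X_i = I$ (so $\|X_i\|=1$) for all $i$, and relabel by replacing each $A_i$ by $A_i^*$. The left-hand side then becomes $\textbf{ber}^r(\sum_{i=1}^n A_iB_i)$, while inside the norm on the right-hand side the term $A_i^*A_i$ is transformed into $(A_i^*)^*(A_i^*)=A_iA_i^*$, yielding precisely the claimed bound. Part (iii) is then just the case $r=1$ of (i), and part (iv) is the further specialization $n=2$, $r=1$.

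For part (ii), I would derive two bounds and then combine them. The first bound is obtained from part (i) by setting $B_i=I$, which gives $\textbf{ber}^r(\sum_{i=1}^n A_i)\leq \frac{n^{r-1}}{2^r}\|\sum_{i=1}^n (A_iA_i^*+I)^r\|_{ber}$. The second bound comes from applying Theorem \ref{th9} directly with $X_i=I$ and $B_i=I$, producing $\textbf{ber}^r(\sum_{i=1}^n A_i^*)\leq \frac{n^{r-1}}{2^r}\|\sum_{i=1}^n (A_i^*A_i+I)^r\|_{ber}$. Using the elementary identity $\widetilde{T^*}(\lambda)=\overline{\widetilde{T}(\lambda)}$, which gives $\textbf{ber}(T^*)=\textbf{ber}(T)$ for every $T\in\mathbb{B}(\mathcal{H})$, we replace $\textbf{ber}(\sum A_i^*)$ by $\textbf{ber}(\sum A_i)$ on the left-hand side of the second bound. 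Taking the minimum of the two bounds produces the asserted estimate.

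There is no real obstacle here since the corollary is a direct consequence of Theorem \ref{th9}; the only subtlety worth noting is the $\textbf{ber}(T^*)=\textbf{ber}(T)$ step used in part (ii), which is immediate from the definition of the Berezin symbol.
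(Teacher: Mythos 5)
Your proposal is correct and is exactly the specialization the paper intends: the authors state that Corollary \ref{cot9} is immediate from Theorem \ref{th9}, and your substitutions ($X_i=I$, $A_i\mapsto A_i^*$ for (i); $B_i=I$ together with $\textbf{ber}(T^*)=\textbf{ber}(T)$ for the two branches of the minimum in (ii); $r=1$ and then $n=2$ for (iii) and (iv)) supply precisely the omitted details.
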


\begin{cor}\label{cot10}
	Let $A,B,X \in \mathbb{B}(\mathcal{H}).$ Then 
	\begin{eqnarray*}
		&&(i)~~\textbf{ber}^r\left(A^*XB\right) \leq \frac{\|X\|^r}{2^r}\|(A^*A+B^*B)^r\|_{ber}\,\,\,~~ \mbox{for all $r \geq 1,$}\\
		&&(ii)~~ \textbf{ber}^r\left(A^*B\right) \leq \frac{\|(A^*A+B^*B)^r\|_{ber}}{2^r} \,\,\,~~\mbox{for all $r \geq 1.$}
	\end{eqnarray*}
\end{cor}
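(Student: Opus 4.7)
My plan is to observe that Corollary \ref{cot10} is a direct specialization of Theorem \ref{th9}, so no new machinery is needed; the whole task is to select the right parameters and simplify.

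First, for part (i), I would apply Theorem \ref{th9} with $n=1$, taking $A_1 = A$, $X_1 = X$, and $B_1 = B$. Substituting into
\[
	\textbf{ber}^r\left(\sum_{i=1}^n A^*_iX_iB_i\right) \leq \frac{n^{r-1}}{2^r}\left(\max_{1\leq i \leq n}\|X_i\|^r\right)\left\|\sum_{i=1}^n\left(A_i^*A_i+B_i^*B_i\right)^r\right\|_{ber},
\]
the prefactor $n^{r-1} = 1^{r-1} = 1$ collapses and the single-summand sums reduce to the corresponding single terms, producing exactly
\[
	\textbf{ber}^r(A^*XB) \leq \frac{\|X\|^r}{2^r}\bigl\|(A^*A+B^*B)^r\bigr\|_{ber}
\]
for all $r\geq 1$, which is (i).

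For part (ii), I would then specialize (i) by taking $X = I$, the identity operator on $\mathcal{H}$. Since $\|I\| = 1$ and $A^*IB = A^*B$, the inequality in (i) immediately yields
\[
	\textbf{ber}^r(A^*B) \leq \frac{1}{2^r}\bigl\|(A^*A+B^*B)^r\bigr\|_{ber}
\]
for all $r \geq 1$, which is (ii). There is no genuine obstacle here: the only thing to check is that the substitutions are legitimate (they are, since $A$, $B$, $X \in \mathbb{B}(\mathcal{H})$ and $I \in \mathbb{B}(\mathcal{H})$ is allowed as the middle factor), and that the constants simplify correctly, as verified above.
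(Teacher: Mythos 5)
Your proposal is correct and matches the paper's intent exactly: the paper states that Corollary \ref{cot10} is immediate from Theorem \ref{th9}, and the specialization you describe (taking $n=1$ with $A_1=A$, $X_1=X$, $B_1=B$ for part (i), then $X=I$ for part (ii)) is precisely the intended derivation, with the constants simplifying as you verify.
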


\begin{cor}\label{cot11}
	Let $A,B,X \in \mathbb{B}(\mathcal{H})$ where $A$ and $B$ are positive. Then for all $r \geq 1$ and $\alpha \in [0,1]$
	\begin{eqnarray*}
		&&(i)~~\textbf{ber}^r\left(A^{\alpha}XB^{1-\alpha}\right) \leq \frac{\|X\|^r}{2^r}\|(A^{2\alpha}+B^{2(1-\alpha)})^r\|_{ber},\\
		&&(ii)~~ \textbf{ber}^r\left(A^{\alpha}B^{1-\alpha}\right) \leq \frac{\|(A^{2\alpha}+B^{2(1-\alpha)})^r\|_{ber}}{2^r}. 
	\end{eqnarray*}
	In particular, if $AB=BA$ then 
	\begin{eqnarray*}
		\left\|\sqrt{AB}\right\|_{ber}^r \leq \left\|\left(\frac{A+B}{2}\right)^r\right\|_{ber}.
	\end{eqnarray*}
\end{cor}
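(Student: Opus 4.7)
The plan is to derive Corollary \ref{cot11} directly from Corollary \ref{cot10} via functional calculus, by substituting fractional powers of positive operators.

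First I would observe that since $A$ and $B$ are positive operators in $\mathbb{B}(\mathcal{H})$, the continuous functional calculus gives positive (hence self-adjoint) operators $A^{\alpha}$ and $B^{1-\alpha}$ for every $\alpha \in [0,1]$, with the identities $(A^{\alpha})^{*}=A^{\alpha}$, $(B^{1-\alpha})^{*}=B^{1-\alpha}$, $(A^{\alpha})^{*}A^{\alpha}=A^{2\alpha}$, and $(B^{1-\alpha})^{*}B^{1-\alpha}=B^{2(1-\alpha)}$. These identities are the bridge between Corollary \ref{cot10} and the statement to be proved.

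For part (i), I would apply Corollary \ref{cot10}(i) with $A$ replaced by $A^{\alpha}$ and $B$ replaced by $B^{1-\alpha}$. By the identities above, the left-hand side becomes $\textbf{ber}^{r}(A^{\alpha}XB^{1-\alpha})$ and the quantity $(A^{*}A+B^{*}B)^{r}$ becomes $(A^{2\alpha}+B^{2(1-\alpha)})^{r}$, yielding (i) immediately. Part (ii) follows in exactly the same way from Corollary \ref{cot10}(ii), or as the special case $X=I$ of (i) (using $\|I\|=1$).

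For the particular commuting case, I would take $\alpha=\tfrac{1}{2}$ in (ii), giving
\begin{eqnarray*}
\textbf{ber}^{r}\!\left(A^{1/2}B^{1/2}\right) \leq \frac{\|(A+B)^{r}\|_{ber}}{2^{r}} = \left\|\left(\frac{A+B}{2}\right)^{r}\right\|_{ber},
\end{eqnarray*}
where in the last equality I use that $A+B$ is positive and scalar multiples commute with powers inside $\|\cdot\|_{ber}$. The only point needing a brief justification is the identification $A^{1/2}B^{1/2}=\sqrt{AB}$ when $AB=BA$: since $A^{1/2}$ and $B^{1/2}$ are functions of commuting positive operators, they commute, so their product is positive, and $(A^{1/2}B^{1/2})^{2}=A^{1/2}B^{1/2}A^{1/2}B^{1/2}=AB$, forcing $A^{1/2}B^{1/2}=(AB)^{1/2}$ by uniqueness of the positive square root. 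I do not anticipate any real obstacle here; the content of the corollary is essentially a clean functional-calculus repackaging of Corollary \ref{cot10}, with the commuting case being the only spot that warrants the short square-root identification just described.
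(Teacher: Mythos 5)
Your derivation is correct and matches the paper's intent exactly: the paper states these corollaries as immediate consequences of Theorem \ref{th9} (via Corollary \ref{cot10}), and the substitution $A\mapsto A^{\alpha}$, $B\mapsto B^{1-\alpha}$ together with self-adjointness of fractional powers of positive operators is precisely the intended route, including your careful identification $A^{1/2}B^{1/2}=(AB)^{1/2}$ in the commuting case. The only step worth adding is that, since $\sqrt{AB}$ is positive, $\|\sqrt{AB}\|_{ber}=\textbf{ber}(\sqrt{AB})$ (the fact from \cite[Proposition 2.11]{BP_sen_CAOT} recalled in the introduction), which converts your left-hand side $\textbf{ber}^{r}(A^{1/2}B^{1/2})$ into the $\|\sqrt{AB}\|_{ber}^{r}$ appearing in the statement.
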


 \begin{remark}
 	(i) Following \cite[Corollary 2.19 (ii)]{BSP_ACTA_2023}, for the case $f(t)= g(t)= t^{1/2}$ and $r=n=2,$ we get 
 	\begin{eqnarray}\label{ee1}
 		\textbf{ber}^2(A_{1}+A_{2})\leq \sqrt{2} \textbf{ber} \left(|A_{1}|^2+|A_{2}|^2+ i(|A^*_{1}|^2+|A^*_{2}|^2)\right).
 	\end{eqnarray}
 	If we consider $\mathcal{H}=\mathbb{C}^2,$ $A_{1}=\begin{bmatrix}
 		1&0\\
 		0&0
 	\end{bmatrix}$ and $A_{2}=\begin{bmatrix}
 	0&1\\
 	0&0
 	\end{bmatrix}$ then from \eqref{ee1} we have $ \textbf{ber}^2(A_{1}+A_{2})\leq \sqrt{10}.$ Again Corollary \ref{cot9} (ii) gives $ \textbf{ber}^2(A_{1}+A_{2})\leq 2.5,$ for the case $n=r=2.$ Therefore, for this example the bound obtained in Corollary \ref{cot9} (i) is sharper than the existing bound given by \eqref{ee1}.\\
 	(ii) In \cite[Corollary 2.7]{sen_FILOMAT}, Sen et al. obtained the following result, namely,
 	\begin{eqnarray}\label{ee2}
 		\textbf{ber}^r (A^*B)\leq \frac{1}{2} \sqrt{\textbf{ber}^2(|A|^{2r}+|B|^{2r})-c^2(|A|^{2r}-|B|^{2r})}
 	\end{eqnarray}
 	where $c(A)=\inf_{\lambda\in\Omega}|\widetilde{A}(\lambda)|.$
 		Considering  $A=\begin{bmatrix}
 		1&1\\
 		0&0
 	\end{bmatrix}$ and $B=\begin{bmatrix}
 		0&1\\
 		0&0
 	\end{bmatrix}$ then from Corollary \ref{cot10} (ii) (for $r=2$) we get  $ \textbf{ber}^2(A^*B)\leq 1.25,$ whereas for $r=2$ the inequality \eqref{ee2} gives $ \textbf{ber}^2(A^*B)\leq 1.41.$ Hence, for this example Corollary \ref{cot10} (ii) is better than the existing result given by \eqref{ee2}.\\
 	(iii) For positive operators $A$ and $B,$ the following upper bound was obtained in \cite[Corollary 2.10]{sen_FILOMAT}, namely,
 	\begin{eqnarray}\label{ee3}
 		\textbf{ber}^4(A^{1/2}B^{1/2}) \leq \frac14\left(\|A^2+B^2\|_{ber}-c^2(A^2-B^2)\right).
 	\end{eqnarray}
 If we take $A=\begin{bmatrix}
 	1/4&0\\
 	0&1/2
 \end{bmatrix}$ and $B=\begin{bmatrix}
 	1/4&0\\
 	0&0
 \end{bmatrix}$ then Corollary \ref{cot11} (ii) (for $r=4, \alpha=\frac12$) gives $ \textbf{ber}^2(A^{1/2}B^{1/2})\leq {1/2^8},$ whereas the inequality \eqref{ee3} gives $ \textbf{ber}^2(A^{1/2}B^{1/2})\leq 1/{2^6}.$ Therefore, for this example we can conclude that the bound of Corollary \ref{cot11} (ii) is sharper than the existing bound \eqref{ee3}.
 \end{remark}

To prove our next theorem, we use the following lemma which is a generalization of Lemma \ref{lm6}.
\begin{lemma}\label{lm13} 
	If $x_{1}, x_{2}, \dots ,x_{n},e \in \mathcal{H}$ with $\|e\|=1 \text{ and } \alpha \in\mathbb{C}\setminus \{0\},$  then
	\begin{eqnarray*}
		\left|\prod_{i=1}^{n}\langle x_{i},e\rangle\right|\leq \frac{1}{|\alpha|}\left(\left| \langle x_{1},x_{2}\rangle \prod_{i=3}^{n}\langle x_{i},e\rangle\right|+\max\{1,|\alpha-1|\}\prod_{i=1}^{n}\|x_{i}\|\right).
	\end{eqnarray*}
\end{lemma}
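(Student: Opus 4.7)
The statement is a multi-vector version of the Buzano-type inequality in Lemma \ref{lm6}, and my plan is to reduce it directly to that lemma by isolating the first two factors of the product and handling the rest with Cauchy--Schwarz.

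First I would factor out $|\langle x_1,e\rangle|\,|\langle x_2,e\rangle|$ from the product on the left-hand side, writing
\[
\left|\prod_{i=1}^{n}\langle x_{i},e\rangle\right|
= |\langle x_1,e\rangle\langle e,x_2\rangle|\cdot \prod_{i=3}^n |\langle x_i,e\rangle|,
\]
where I have used that $|\langle e,x_2\rangle|=|\langle x_2,e\rangle|$. The first factor is exactly in the form to which Lemma \ref{lm6} applies, yielding
\[
|\langle x_1,e\rangle\langle e,x_2\rangle|\le \frac{1}{|\alpha|}\Bigl(\max\{1,|\alpha-1|\}\,\|x_1\|\|x_2\|+|\langle x_1,x_2\rangle|\Bigr).
\]

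Next I would multiply both sides of this inequality by the nonnegative quantity $\prod_{i=3}^n |\langle x_i,e\rangle|$, splitting the right-hand side into two terms. On the second term, the factor $\prod_{i=3}^n |\langle x_i,e\rangle|$ combines with $|\langle x_1,x_2\rangle|$ to give exactly $\bigl|\langle x_1,x_2\rangle\prod_{i=3}^n \langle x_i,e\rangle\bigr|$, matching the first term of the asserted bound. For the first term, I would apply Cauchy--Schwarz together with $\|e\|=1$ to obtain $|\langle x_i,e\rangle|\le \|x_i\|$ for each $i=3,\dots,n$, so that $\max\{1,|\alpha-1|\}\,\|x_1\|\|x_2\|\prod_{i=3}^n |\langle x_i,e\rangle|\le \max\{1,|\alpha-1|\}\prod_{i=1}^n \|x_i\|$, which is precisely the second term required.

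Adding these two estimates gives the claimed inequality. There is no real obstacle here beyond bookkeeping; the whole argument is essentially a one-line reduction to Lemma \ref{lm6} followed by termwise Cauchy--Schwarz on the remaining inner products, and the only point to be mindful of is keeping the two summands on the right-hand side aligned with the statement (in particular, not prematurely bounding $|\langle x_1,x_2\rangle|$ by $\|x_1\|\|x_2\|$, since the sharper form is what makes the lemma useful in the subsequent applications).
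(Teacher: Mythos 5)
Your proposal is correct and matches the paper's argument: the paper obtains the same inequality by substituting $x_2$ with $\prod_{i=3}^{n}\langle x_i,e\rangle\, x_2$ in Lemma \ref{lm6} and then bounding $\big|\prod_{i=3}^{n}\langle x_i,e\rangle\big|\leq \prod_{i=3}^{n}\|x_i\|$, which is algebraically identical to your factor-then-multiply bookkeeping. No gap.
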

\begin{proof}
	Substituting   $x_{2}$ by $\prod_{i=3}^{n}\langle x_{i},e\rangle x_{2}$ and applying $\big|\prod_{i=3}^{n}\langle x_{i},e\rangle\big|\leq \prod_{i=3}^{n}\|x_{i}\|$ in Lemma \ref{lm6}, we get the desired result.
\end{proof}

\begin{theorem}\label{th10}
	Let	$A\in \mathbb{B}(\mathcal{H})$ . Then
	\begin{eqnarray*}
		\textbf{ber}^3(A)&\leq& \inf_{\alpha,\beta\in\mathbb{C}\backslash\{0\}}\Bigg\{\frac{1}{|\alpha||\beta|} \textbf{ber}(A^3)
	 +\Bigg(\frac{\max\left\{1, |\beta-1|\right\}}{|\alpha||\beta|}\left\|{A^*}^2 A^2\right\|^{\frac{1}{2}}_{ber}\\
	 &&+\frac{\max\left\{1, |\alpha-1|\right\}}{2|\alpha|}\left\||A|^2+ |A^*|^2\right\|_{ber}\Bigg)\|AA^*\|^{\frac{1}{2}}_{ber}\Bigg\}.
	\end{eqnarray*}
	\end{theorem}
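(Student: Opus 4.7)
The plan is to start from $|\widetilde{A}(\lambda)|^{3}=|\widetilde{A}(\lambda)|^{2}\cdot|\widetilde{A}(\lambda)|$ at the level of a fixed normalized reproducing kernel $\hat{k}_{\lambda}$, apply the generalized Buzano inequality (Lemma \ref{lm6}) twice with two independent parameters $\alpha$ and $\beta$, bound the residual norms by Berezin norms, and finally take the supremum over $\lambda\in\Omega$. Note that Lemma \ref{lm13} is overkill here; the plain Lemma \ref{lm6} applied twice already produces the three constituent terms on the right-hand side, namely $\textbf{ber}(A^{3})$, $\|{A^{*}}^{2}A^{2}\|_{ber}^{1/2}$, and $\||A|^{2}+|A^{*}|^{2}\|_{ber}$, together with the ubiquitous factor $\|AA^{*}\|_{ber}^{1/2}$ that will come from bounding $|\widetilde{A}(\lambda)|$ and $\|A^{*}\hat{k}_{\lambda}\|$.

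First I would write $|\widetilde{A}(\lambda)|^{2}=|\langle A\hat{k}_{\lambda},\hat{k}_{\lambda}\rangle\,\langle \hat{k}_{\lambda},A^{*}\hat{k}_{\lambda}\rangle|$ and apply Lemma \ref{lm6} with $x_{1}=A\hat{k}_{\lambda}$, $x_{2}=A^{*}\hat{k}_{\lambda}$, $e=\hat{k}_{\lambda}$, and parameter $\alpha$, to obtain
\[
|\widetilde{A}(\lambda)|^{2}\leq \frac{\max\{1,|\alpha-1|\}}{|\alpha|}\|A\hat{k}_{\lambda}\|\,\|A^{*}\hat{k}_{\lambda}\|+\frac{1}{|\alpha|}|\langle A^{2}\hat{k}_{\lambda},\hat{k}_{\lambda}\rangle|,
\]
and then use AM-GM to upgrade $\|A\hat{k}_{\lambda}\|\|A^{*}\hat{k}_{\lambda}\|\leq \tfrac12\langle(|A|^{2}+|A^{*}|^{2})\hat{k}_{\lambda},\hat{k}_{\lambda}\rangle$.

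Next, multiplying through by $|\widetilde{A}(\lambda)|$ produces two terms. In the first, the factor $|\widetilde{A}(\lambda)|=|\langle \hat{k}_{\lambda},A^{*}\hat{k}_{\lambda}\rangle|$ is bounded via Cauchy--Schwarz by $\|A^{*}\hat{k}_{\lambda}\|=\langle AA^{*}\hat{k}_{\lambda},\hat{k}_{\lambda}\rangle^{1/2}\leq \|AA^{*}\|_{ber}^{1/2}$, while $\langle(|A|^{2}+|A^{*}|^{2})\hat{k}_{\lambda},\hat{k}_{\lambda}\rangle\leq \||A|^{2}+|A^{*}|^{2}\|_{ber}$ (using positivity and the identity $\|P\|_{ber}=\textbf{ber}(P)$ for positive $P$). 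In the second term, the product $|\langle A^{2}\hat{k}_{\lambda},\hat{k}_{\lambda}\rangle|\cdot|\widetilde{A}(\lambda)|$ is exactly $|\langle A^{2}\hat{k}_{\lambda},\hat{k}_{\lambda}\rangle\,\langle \hat{k}_{\lambda},A^{*}\hat{k}_{\lambda}\rangle|$, and a second application of Lemma \ref{lm6} (this time with $x_{1}=A^{2}\hat{k}_{\lambda}$, $x_{2}=A^{*}\hat{k}_{\lambda}$, $e=\hat{k}_{\lambda}$, and parameter $\beta$) gives
\[
|\langle A^{2}\hat{k}_{\lambda},\hat{k}_{\lambda}\rangle\,\langle \hat{k}_{\lambda},A^{*}\hat{k}_{\lambda}\rangle|\leq \frac{\max\{1,|\beta-1|\}}{|\beta|}\|A^{2}\hat{k}_{\lambda}\|\,\|A^{*}\hat{k}_{\lambda}\|+\frac{1}{|\beta|}|\langle A^{3}\hat{k}_{\lambda},\hat{k}_{\lambda}\rangle|,
\]
after observing that $\langle A^{2}\hat{k}_{\lambda},A^{*}\hat{k}_{\lambda}\rangle=\langle A^{3}\hat{k}_{\lambda},\hat{k}_{\lambda}\rangle$. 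Finally, $\|A^{2}\hat{k}_{\lambda}\|\leq \|{A^{*}}^{2}A^{2}\|_{ber}^{1/2}$ and $\|A^{*}\hat{k}_{\lambda}\|\leq \|AA^{*}\|_{ber}^{1/2}$, which isolates precisely the claimed $\|AA^{*}\|_{ber}^{1/2}$ as a common factor of the two non-$\textbf{ber}(A^{3})$ contributions.

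Taking the supremum over $\lambda\in\Omega$ yields the inequality for fixed $\alpha,\beta\in\mathbb{C}\setminus\{0\}$, and then infimizing over $\alpha,\beta$ delivers the stated bound. I do not foresee a genuine obstacle; the only mild bookkeeping point is keeping track of which $\|\cdot\|_{ber}^{1/2}$ arises where, ensuring that it is $\|AA^{*}\|_{ber}^{1/2}$ (and not $\|A^{*}A\|_{ber}^{1/2}$) that factors out of both correction terms, which is forced by the choice to write $|\widetilde{A}(\lambda)|=|\langle\hat{k}_{\lambda},A^{*}\hat{k}_{\lambda}\rangle|$ and to use $A^{*}\hat{k}_{\lambda}$ in the $x_{2}$-slot of both applications of Lemma \ref{lm6}.
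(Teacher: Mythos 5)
Your proof is correct and follows essentially the same route as the paper: both arguments apply the generalized Buzano inequality twice with independent parameters $\alpha,\beta$ to the factorization $|\widetilde{A}(\lambda)|^{3}=|\langle A\hat{k}_{\lambda},\hat{k}_{\lambda}\rangle\langle A^{*}\hat{k}_{\lambda},\hat{k}_{\lambda}\rangle\langle A^{*}\hat{k}_{\lambda},\hat{k}_{\lambda}\rangle|$, use AM--GM on $\|A\hat{k}_{\lambda}\|\|A^{*}\hat{k}_{\lambda}\|$, factor out $\|A^{*}\hat{k}_{\lambda}\|=\langle AA^{*}\hat{k}_{\lambda},\hat{k}_{\lambda}\rangle^{1/2}$, and pass to Berezin norms before taking suprema. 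The only difference is organizational: the paper invokes Lemma \ref{lm13} (which carries the third factor along) before Lemma \ref{lm6}, whereas you apply Lemma \ref{lm6} twice and reinsert the third factor between the two applications, yielding the identical chain of estimates.
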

	\begin{proof}
		Let $\hat{k}_{\lambda}$ be a normalized reproducing kernel of $\mathcal{H}$. Then using Lemma \ref{lm13}, we get
		\begin{eqnarray*}
			|\langle A\hat{k}_{\lambda},\hat{k}_{\lambda}\rangle|^3
			&=& |\langle A\hat{k}_{\lambda},\hat{k}_{\lambda}\rangle \langle A^*\hat{k}_{\lambda},\hat{k}_{\lambda}\rangle\langle A^*\hat{k}_{\lambda},\hat{k}_{\lambda}\rangle|\\
			&\leq& \frac{1}{|\alpha|} \left(|\langle A\hat{k}_{\lambda},A^*\hat{k}_{\lambda}\rangle\langle A^*\hat{k}_{\lambda},\hat{k}_{\lambda}\rangle|+\max\left\{1, |\alpha-1|\right\}\|A\hat{k}_{\lambda}\|\|A^*\hat{k}_{\lambda}\|^2\right)\\
			&\leq& \frac{1}{|\alpha|}\Bigg(\frac{1}{|\beta|}\Big(|\langle A^2\hat{k}_{\lambda},A^*\hat{k}_{\lambda}\rangle| +\max\left\{1, |\beta-1|\right\}\|A^2\hat{k}_{\lambda}\|\|A^*\hat{k}_{\lambda}\|\Big)\\
			&&+\max\left\{1, |\alpha-1|\right\}\|A\hat{k}_{\lambda}\|\|A^*\hat{k}_{\lambda}\|^2\Bigg)\\
			&=& \frac{1}{|\alpha|}\Bigg(\frac{1}{|\beta|}\Big(|\langle A^3\hat{k}_{\lambda},\hat{k}_{\lambda}\rangle| +\max\left\{1, |\beta-1|\right\}\|A^2\hat{k}_{\lambda}\|\|A^*\hat{k}_{\lambda}\|\Big)\\
			&&+\max\left\{1, |\alpha-1|\right\}\|A\hat{k}_{\lambda}\|\|A^*\hat{k}_{\lambda}\|^2\Bigg)\\
			&\leq& \frac{1}{|\alpha||\beta|}|\langle A^3\hat{k}_{\lambda},\hat{k}_{\lambda}\rangle| +\Bigg(\frac{\max\left\{1, |\beta-1|\right\}}{|\alpha||\beta|}\|A^2\hat{k}_{\lambda}\|\\
			&&+\frac{\max\left\{1, |\alpha-1|\right\}}{2|\alpha|}\big(\|A\hat{k}_{\lambda}\|^2+\|A^*\hat{k}_{\lambda}\|^2\big)\Bigg)\|A^*\hat{k}_{\lambda}\|\\
			&=&\frac{1}{|\alpha||\beta|}|\langle A^3\hat{k}_{\lambda},\hat{k}_{\lambda}\rangle| +\Bigg(\frac{\max\left\{1, |\beta-1|\right\}}{|\alpha||\beta|}\langle {A^*}^2 A^2\hat{k}_{\lambda},\hat{k}_{\lambda}\rangle^{\frac{1}{2}}\\
			&&+\frac{\max\left\{1, |\alpha-1|\right\}}{2|\alpha|}\langle (|A|^2+|A^*|^2)\hat{k}_{\lambda},\hat{k}_{\lambda}\rangle\Bigg)\langle AA^*\hat{k}_{\lambda},\hat{k}_{\lambda}\rangle^{\frac{1}{2}}\\
			&\leq & \frac{1}{|\alpha||\beta|} \textbf{ber}(A^3)
			+\Bigg(\frac{\max\left\{1, |\beta-1|\right\}}{|\alpha||\beta|}\left\|{A^*}^2 A^2\right\|^{\frac{1}{2}}_{ber}\\
			&&+\frac{\max\left\{1, |\alpha-1|\right\}}{2|\alpha|}\left\||A|^2+ |A^*|^2\right\|_{ber}\Bigg)\|AA^*\|^{\frac{1}{2}}_{ber}.
		\end{eqnarray*}
		Taking the supremum over all $\lambda\in\Omega$, we obtain
		\begin{eqnarray}\label{th10eq1}
			\textbf{ber}^3(A)&\leq& \frac{1}{|\alpha||\beta|} \textbf{ber}(A^3)
			+\Bigg(\frac{\max\left\{1, |\beta-1|\right\}}{|\alpha||\beta|}\left\|{A^*}^2 A^2\right\|^{\frac{1}{2}}_{ber}\nonumber\\
			&&+\frac{\max\left\{1, |\alpha-1|\right\}}{2|\alpha|}\left\||A|^2+ |A^*|^2\right\|_{ber}\Bigg)\|AA^*\|^{\frac{1}{2}}_{ber}.
		\end{eqnarray}
	The required result follows from \eqref{th10eq1} by taking infimum over all $\alpha,\beta\in\mathbb{C}\backslash\{0\}.$
	\end{proof}

Considering  $\alpha=\beta=2$ in Theorem \ref{th10}, we get the following corollary.

\begin{cor}\label{th10cor1}
		Let	$A\in \mathbb{B}(\mathcal{H})$. Then  
		\begin{eqnarray*}
			\textbf{ber}^3(A) \leq \frac{1}{4} \textbf{ber}(A^3)
			+\frac{1}{4}\left(\left\|{A^*}^2 A^2\right\|^{\frac{1}{2}}_{ber}+\left\||A|^2+ |A^*|^2\right\|_{ber}\right)\|AA^*\|^{\frac{1}{2}}_{ber}.
		\end{eqnarray*}
\end{cor}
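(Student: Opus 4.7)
The plan is to bound $|\widetilde A(\lambda)|^3$ pointwise at each $\lambda\in\Omega$ by invoking the just-stated three-factor inequality (Lemma \ref{lm13}) once and the Buzano-type Lemma \ref{lm6} once, then convert the resulting norm expressions into Berezin symbols of positive operators, pass to the supremum, and finally take the infimum over the free parameters $\alpha,\beta$. Since $|\langle A^*\hat k_\lambda,\hat k_\lambda\rangle|=|\overline{\widetilde A(\lambda)}|=|\widetilde A(\lambda)|$, the cube splits as
\[|\widetilde A(\lambda)|^3 \;=\; \bigl|\langle A\hat k_\lambda,\hat k_\lambda\rangle\,\langle A^*\hat k_\lambda,\hat k_\lambda\rangle\,\langle A^*\hat k_\lambda,\hat k_\lambda\rangle\bigr|,\]
which is exactly the three-factor product of inner products against $e=\hat k_\lambda$ that Lemma \ref{lm13} is built to process.

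First I apply Lemma \ref{lm13} with $n=3$, $x_1=A\hat k_\lambda$, $x_2=x_3=A^*\hat k_\lambda$, and parameter $\alpha$. Using $\langle A\hat k_\lambda,A^*\hat k_\lambda\rangle=\langle A^2\hat k_\lambda,\hat k_\lambda\rangle$ this produces
\[|\widetilde A(\lambda)|^3\leq\frac{1}{|\alpha|}\Bigl(\bigl|\langle A^2\hat k_\lambda,\hat k_\lambda\rangle\langle A^*\hat k_\lambda,\hat k_\lambda\rangle\bigr|+\max\{1,|\alpha-1|\}\|A\hat k_\lambda\|\|A^*\hat k_\lambda\|^2\Bigr).\]
Next, I feed the two-factor piece into Lemma \ref{lm6} with $x_1=A^2\hat k_\lambda$, $x_2=A^*\hat k_\lambda$, and parameter $\beta$; using $\langle A^2\hat k_\lambda,A^*\hat k_\lambda\rangle=\langle A^3\hat k_\lambda,\hat k_\lambda\rangle$ this bounds that piece by $\tfrac{1}{|\beta|}\bigl(|\langle A^3\hat k_\lambda,\hat k_\lambda\rangle|+\max\{1,|\beta-1|\}\|A^2\hat k_\lambda\|\|A^*\hat k_\lambda\|\bigr)$.

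To finish, I convert the remaining norms to Berezin symbols of positive operators via $\|A^2\hat k_\lambda\|\|A^*\hat k_\lambda\|=\langle {A^*}^2A^2\hat k_\lambda,\hat k_\lambda\rangle^{1/2}\langle AA^*\hat k_\lambda,\hat k_\lambda\rangle^{1/2}$, and by the AM--GM estimate
\[\|A\hat k_\lambda\|\|A^*\hat k_\lambda\|^2 \leq \tfrac{1}{2}\bigl(\|A\hat k_\lambda\|^2+\|A^*\hat k_\lambda\|^2\bigr)\|A^*\hat k_\lambda\| = \tfrac{1}{2}\langle(|A|^2+|A^*|^2)\hat k_\lambda,\hat k_\lambda\rangle\,\langle AA^*\hat k_\lambda,\hat k_\lambda\rangle^{1/2}.\]
This lets me factor $\langle AA^*\hat k_\lambda,\hat k_\lambda\rangle^{1/2}$ out of both non-$A^3$ terms. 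Since ${A^*}^2A^2$, $|A|^2+|A^*|^2$, and $AA^*$ are positive, their Berezin numbers equal their Berezin norms by \cite[Proposition~2.11]{BP_sen_CAOT}, so passing to the supremum over $\lambda\in\Omega$ and using $\sup fg\leq\sup f\cdot\sup g$ for nonnegative $f,g$ yields the claimed inequality at fixed $\alpha,\beta$; taking the infimum over $\alpha,\beta\in\mathbb{C}\setminus\{0\}$ finishes the proof. The main obstacle is purely combinatorial: arranging the arguments of the two lemmas so that the collapsed inner products $\langle x_1,x_2\rangle$ become precisely $\langle A^2\hat k_\lambda,\hat k_\lambda\rangle$ and $\langle A^3\hat k_\lambda,\hat k_\lambda\rangle$ (and not their adjoints or something involving $AA^*$), after which the remaining steps are routine.
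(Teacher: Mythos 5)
Your proposal is correct and follows essentially the same route as the paper: the paper proves the general parametrized inequality (Theorem \ref{th10}) by applying Lemma \ref{lm13} to the factorization $\langle A\hat k_\lambda,\hat k_\lambda\rangle\langle A^*\hat k_\lambda,\hat k_\lambda\rangle^2$, then Lemma \ref{lm6} to collapse $\langle A^2\hat k_\lambda,\hat k_\lambda\rangle\langle A^*\hat k_\lambda,\hat k_\lambda\rangle$ into $\langle A^3\hat k_\lambda,\hat k_\lambda\rangle$, followed by the same AM--GM and Berezin-norm conversions, and obtains the corollary by setting $\alpha=\beta=2$. Your choices of $x_1,x_2,x_3$ and the subsequent estimates match the paper's computation step for step.
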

\begin{remark}
	In \cite[Corollary 3.5(i)]{TRD_FILOMAT_19}, Taghavi et al. obtained the following bound of Berezin number, namely,
	\begin{eqnarray}\label{ee4}
		\textbf{ber}^r (A)\leq \frac{1}{2} 	\textbf{ber}(|A|^r+|A^*|^r) ~~\mbox{for all $r\geq 1$ .} 
	\end{eqnarray}
	If we consider $\mathcal{H}=\mathbb{C}^3$ and $A=\begin{bmatrix}
		0&1&0\\
		0&0&1\\
		0&0&0
	\end{bmatrix}$ then by simple computation from Corollary \ref{th10cor1} we get $\textbf{ber}^3(A)\leq 0.75,$ whereas for $r=3$ the inequality \eqref{ee4} gives $\textbf{ber}^3(A)\leq 1.$ Therefore, for this example the bound obtained in Corollary \ref{th10cor1} is better than the existing bound 
\eqref{ee4}.
\end{remark}

Proceeding similarly as Theorem \ref{th10} and by using Lemma \ref{lm8} and Lemma \ref{lm13}, we get the following upper bounds of Berezin number.

	\begin{theorem}\label{th11}
			Let	$A\in \mathbb{B}(\mathcal{H})$ . Then  
			\begin{eqnarray*}
			(i)~~\textbf{ber}^3(A)&\leq& \inf_{\alpha,\beta\in\mathbb{C}\backslash\{0\}}\Bigg\{\frac{1}{|\alpha||\beta|} \textbf{ber}(A^*|A^*||A|)
			+\Bigg(\frac{\max\left\{1, |\beta-1|\right\}}{|\alpha||\beta|}\left\||A||A^*|^2|A|\right\|^{\frac{1}{2}}_{ber}\\
			&&+\frac{\max\left\{1, |\alpha-1|\right\}}{2|\alpha|}\left\||A|^2+ |A^*|^2\right\|_{ber}\Bigg)\|A^*A\|^{\frac{1}{2}}_{ber}\Bigg\}\\
			& \leq & \frac14 \textbf{ber}(A^*|A^*||A|)+\frac14\left(\left\||A||A^*|^2|A|\right\|^{\frac{1}{2}}_{ber}+\left\||A|^2+ |A^*|^2\right\|_{ber}\right)\|A^*A\|^{\frac{1}{2}}_{ber},\\
			(ii)~\textbf{ber}^3(A)&\leq& \inf_{\alpha,\beta\in\mathbb{C}\backslash\{0\}}\Bigg\{\frac{1}{|\alpha||\beta|} \textbf{ber}(A|A||A^*|)
			+\Bigg(\frac{\max\left\{1, |\beta-1|\right\}}{|\alpha||\beta|}\left\||A^*||A|^2|A^*|\right\|^{\frac{1}{2}}_{ber}\\
			&&+\frac{\max\left\{1, |\alpha-1|\right\}}{2|\alpha|}\left\||A|^2+ |A^*|^2\right\|_{ber}\Bigg)\|AA^*\|^{\frac{1}{2}}_{ber}\Bigg\}\\
				& \leq & \frac14 \textbf{ber}(A|A||A^*|)+\frac14\left(\left\||A^*||A|^2|A^*|\right\|^{\frac{1}{2}}_{ber}+\left\||A|^2+ |A^*|^2\right\|_{ber}\right)\|AA^*\|^{\frac{1}{2}}_{ber}.\\
			(iii)~\textbf{ber}^4(A)&\leq&\frac{1}{4}\left(\textbf{ber}( |A^*||A|)+\frac{1}{2}\||A|^2+|A^*|^2\|_{ber}\right)
				\left(\textbf{ber}( {A}^2)+\frac{1}{2}\||A|^2+|A^*|^2\|_{ber}\right).\\
			(iv)~\textbf{ber}^4(A)&\leq&\frac{1}{4}\left(\textbf{ber}( A|A|)+\frac{1}{2}\||A|^2+|A^*|^2\|_{ber}\right)
			\left(\textbf{ber}( A^* |A^*|)+\frac{1}{2}\||A|^2+|A^*|^2\|_{ber}\right). 	
			\end{eqnarray*}                                                            
	\end{theorem}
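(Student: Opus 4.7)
The plan is to mirror the proof of Theorem~\ref{th10}, substituting one use of the identity $|\langle A\hat{k}_\lambda,\hat{k}_\lambda\rangle|^2=\langle A\hat{k}_\lambda,\hat{k}_\lambda\rangle\langle A^*\hat{k}_\lambda,\hat{k}_\lambda\rangle$ by the Kato-type estimate of Lemma~\ref{lm8}: $|\langle A\hat{k}_\lambda,\hat{k}_\lambda\rangle|^2\leq \langle |A|\hat{k}_\lambda,\hat{k}_\lambda\rangle\,\langle |A^*|\hat{k}_\lambda,\hat{k}_\lambda\rangle$. This injects the modulus factors $|A|,|A^*|$ that appear in the statement, and the subsequent applications of the Buzano-type inequality in Lemma~\ref{lm13} then produce the specific operator products appearing in each bound through carefully chosen vectors.

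For part~(i), I would write $|\langle A\hat{k}_\lambda,\hat{k}_\lambda\rangle|^3 = |\langle A\hat{k}_\lambda,\hat{k}_\lambda\rangle|^2\cdot|\langle A\hat{k}_\lambda,\hat{k}_\lambda\rangle|$, apply Lemma~\ref{lm8} to the squared factor, and then apply Lemma~\ref{lm13} with $n=3$ and parameter $\alpha$ to the resulting product of three inner products, choosing $x_1=|A|\hat{k}_\lambda$, $x_2=|A^*|\hat{k}_\lambda$, $x_3=A\hat{k}_\lambda$, so that $\langle x_1,x_2\rangle=\langle |A^*||A|\hat{k}_\lambda,\hat{k}_\lambda\rangle$. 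A second application of Lemma~\ref{lm13} with $n=2$ and parameter $\beta$, using $y_1=|A^*||A|\hat{k}_\lambda$, $y_2=A\hat{k}_\lambda$, then produces $\langle y_1,y_2\rangle=\langle A^*|A^*||A|\hat{k}_\lambda,\hat{k}_\lambda\rangle$ together with the norm $\||A^*||A|\hat{k}_\lambda\|\,\|A\hat{k}_\lambda\|$, which is bounded pointwise by $\||A||A^*|^2|A|\|_{ber}^{1/2}\,\|A^*A\|_{ber}^{1/2}$. The remaining cross-term $\||A|\hat{k}_\lambda\|\,\||A^*|\hat{k}_\lambda\|$ from the first application is controlled by the AM--GM bound $\leq \frac{1}{2}\langle(|A|^2+|A^*|^2)\hat{k}_\lambda,\hat{k}_\lambda\rangle$. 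Taking $\sup_\lambda$ gives the $(\alpha,\beta)$-parametric inequality, and the constant $1/4$ in the coarser second inequality is recovered by $\alpha=\beta=2$. Part~(ii) is then obtained by applying part~(i) to $A^*$ in place of $A$, since $|\langle A^*\hat{k}_\lambda,\hat{k}_\lambda\rangle|=|\langle A\hat{k}_\lambda,\hat{k}_\lambda\rangle|$; this swaps $|A|\leftrightarrow |A^*|$ throughout.

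Parts~(iii) and~(iv) both start from the factorisation $|\langle A\hat{k}_\lambda,\hat{k}_\lambda\rangle|^4 = |\langle A\hat{k}_\lambda,\hat{k}_\lambda\rangle|^2\cdot|\langle A\hat{k}_\lambda,\hat{k}_\lambda\rangle|^2$. For~(iii), I would bound one squared factor using Lemma~\ref{lm8} followed by Lemma~\ref{lm6} with $\alpha=2$ (to produce $|A^*||A|$), and bound the other squared factor by applying Lemma~\ref{lm6} directly to $\langle A\hat{k}_\lambda,\hat{k}_\lambda\rangle\langle A^*\hat{k}_\lambda,\hat{k}_\lambda\rangle$ with $x_1=A\hat{k}_\lambda$, $x_2=A^*\hat{k}_\lambda$ (to produce $A^2$); in each case AM--GM converts the cross-term into $\frac{1}{2}\langle(|A|^2+|A^*|^2)\hat{k}_\lambda,\hat{k}_\lambda\rangle$. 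For~(iv), I would first apply Lemma~\ref{lm8} once, so that $|\langle A\hat{k}_\lambda,\hat{k}_\lambda\rangle|^4 \leq \langle|A|\hat{k}_\lambda,\hat{k}_\lambda\rangle\langle|A^*|\hat{k}_\lambda,\hat{k}_\lambda\rangle\cdot|\langle A\hat{k}_\lambda,\hat{k}_\lambda\rangle|^2$, and then apply Lemma~\ref{lm6} separately to each of the two products $\langle|A|\hat{k}_\lambda,\hat{k}_\lambda\rangle\,|\langle A\hat{k}_\lambda,\hat{k}_\lambda\rangle|$ (with $y_1=|A|\hat{k}_\lambda$, $y_2=A^*\hat{k}_\lambda$, giving $A|A|$) and $\langle|A^*|\hat{k}_\lambda,\hat{k}_\lambda\rangle\,|\langle A\hat{k}_\lambda,\hat{k}_\lambda\rangle|$ (with $y_1=|A^*|\hat{k}_\lambda$, $y_2=A\hat{k}_\lambda$, giving $A^*|A^*|$). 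Multiplying the two resulting pointwise estimates, applying AM--GM on each remaining cross-term of norms, and then taking $\sup_\lambda$ factor-wise yields the stated bound.

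The main obstacle is the bookkeeping of the orderings of the vectors in Lemma~\ref{lm13}. Since $|A|$, $|A^*|$, and $A$ are non-commuting in general, slight reorderings of $x_1,x_2,x_3$ produce inequivalent operators on the right-hand side (for instance $A^*|A^*||A|$ versus $A^*|A||A^*|$). One must therefore carefully track each use of the adjoint identity $\langle Tx,y\rangle=\langle x,T^*y\rangle$ when rewriting an inner product as a Berezin symbol, to ensure that the operator named in the theorem -- and not a plausible but incorrect variant -- appears in the final bound.
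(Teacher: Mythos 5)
Your proposal is correct and is essentially the route the paper itself takes (the paper only indicates it by ``proceeding similarly as Theorem~\ref{th10} and by using Lemma~\ref{lm8} and Lemma~\ref{lm13}''): one application of Lemma~\ref{lm8} to inject $|A|$ and $|A^*|$, then the Buzano-type Lemmas~\ref{lm13} and~\ref{lm6} with exactly the vector choices you specify, AM--GM on the residual norm products, and a factor-wise supremum. Your derivation of (ii) from (i) via $A\mapsto A^*$ and your caution about the ordering of the vectors (so that $A^*|A^*||A|$, $A|A|$, $A^*|A^*|$, etc.\ come out as stated) match the paper's computation.
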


Finally, we obtain the following general power inequality of Berezin number.

\begin{theorem}\label{T20}
		Let	$A\in \mathbb{B}(\mathcal{H})$ . Then 
		\begin{eqnarray*}
			\textbf{ber}^n(A) \leq \frac{1}{2^{n-1}}\textbf{ber}(A^n)+\sum_{i=1}^{n-1}\frac{1}{2^i}\|A^{*i}A^i\|^{1/2}_{ber}\|AA^*\|^{\frac{n-i}{2}}_{ber},
		\end{eqnarray*}
	for every positive integers $n\geq 2.$
\end{theorem}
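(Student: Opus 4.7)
The approach is to establish a sharper \emph{pointwise} inequality in the normalized reproducing kernel $\hat{k}_{\lambda}$, which then yields the stated theorem after a uniform passage to the supremum. Concretely, I would prove by induction on $n \geq 1$ the claim
\[
|\langle A\hat{k}_{\lambda},\hat{k}_{\lambda}\rangle|^n \leq \frac{1}{2^{n-1}}\bigl|\langle A^n\hat{k}_{\lambda},\hat{k}_{\lambda}\rangle\bigr| + \sum_{i=1}^{n-1}\frac{1}{2^i}\|A^i\hat{k}_{\lambda}\|\,\|A^*\hat{k}_{\lambda}\|^{n-i}.
\]
The base case $n=1$ is trivial with the empty-sum convention. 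For the inductive step, I would multiply the induction hypothesis (at $n-1$) by $|\langle A\hat{k}_{\lambda},\hat{k}_{\lambda}\rangle| = |\langle \hat{k}_{\lambda}, A^*\hat{k}_{\lambda}\rangle|$.

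The leading term becomes $\frac{1}{2^{n-2}}\bigl|\langle A^{n-1}\hat{k}_{\lambda},\hat{k}_{\lambda}\rangle \langle \hat{k}_{\lambda}, A^*\hat{k}_{\lambda}\rangle\bigr|$. To this I apply Buzano's inequality (Lemma~\ref{lm6} with $\alpha=2$) choosing $x_1 = A^{n-1}\hat{k}_{\lambda}$, $x_2 = A^*\hat{k}_{\lambda}$, $e = \hat{k}_{\lambda}$, together with the identity $\langle A^{n-1}\hat{k}_{\lambda},A^*\hat{k}_{\lambda}\rangle = \langle A^n\hat{k}_{\lambda},\hat{k}_{\lambda}\rangle$. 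This produces exactly the term $\frac{1}{2^{n-1}}|\langle A^n\hat{k}_{\lambda},\hat{k}_{\lambda}\rangle|$ plus an extra $\frac{1}{2^{n-1}}\|A^{n-1}\hat{k}_{\lambda}\|\,\|A^*\hat{k}_{\lambda}\|$. For each term in the inductive sum, I bound the new factor $|\langle A\hat{k}_{\lambda},\hat{k}_{\lambda}\rangle|$ by $\|A^*\hat{k}_{\lambda}\|$ via Cauchy--Schwarz, which raises the exponent of $\|A^*\hat{k}_{\lambda}\|$ from $n-1-i$ to $n-i$. Reindexing, the new Buzano-generated piece plays the role of the missing $i=n-1$ summand, closing the induction.

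To obtain the theorem, I then use the basic bounds $\|A^i\hat{k}_{\lambda}\|^2 = \langle A^{*i}A^i \hat{k}_{\lambda},\hat{k}_{\lambda}\rangle \le \|A^{*i}A^i\|_{ber}$ and $\|A^*\hat{k}_{\lambda}\|^2 = \langle AA^* \hat{k}_{\lambda},\hat{k}_{\lambda}\rangle \le \|AA^*\|_{ber}$, so that each summand in the pointwise inequality is uniformly dominated by the corresponding term in the stated upper bound. Taking the supremum over $\lambda \in \Omega$ on the left delivers $\textbf{ber}^n(A)$ and concludes the argument.

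The only genuine obstacle is the bookkeeping in the inductive step: one must verify that the term $\frac{1}{2^{n-1}}\|A^{n-1}\hat{k}_{\lambda}\|\,\|A^*\hat{k}_{\lambda}\|$ spat out by Buzano fits exactly at the end of the reindexed sum produced by the Cauchy--Schwarz step, so that the combined inequality has the correct form with the upper limit $i=n-1$ and coefficient $\frac{1}{2^{n-1}}$. This is purely an exercise in tracking powers of $2$ and of $\|A^*\hat{k}_{\lambda}\|$; no further analytic input is required beyond Buzano and Cauchy--Schwarz.
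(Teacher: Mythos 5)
Your proposal is correct, and its overall skeleton --- establish a pointwise inequality at $\hat{k}_{\lambda}$, dominate each term by the corresponding Berezin quantity via $\|A^i\hat{k}_{\lambda}\|^2=\langle A^{*i}A^i\hat{k}_{\lambda},\hat{k}_{\lambda}\rangle\le\|A^{*i}A^i\|_{ber}$ and $\|A^*\hat{k}_{\lambda}\|^2\le\|AA^*\|_{ber}$, then take the supremum over $\lambda\in\Omega$ --- is exactly what the paper does. The one difference is that the paper simply imports the pointwise inequality
\[
|\langle Ax,x \rangle|^n \leq \frac{1}{2^{n-1}}|\langle A^nx,x \rangle|+\sum_{i=1}^{n-1}\frac{1}{2^i}\|A^ix\|\,\|A^*x\|^{n-i}
\]
as a citation to \cite[Theorem 3.1]{B_arxiv_23}, whereas you reprove it by induction on $n$ using Buzano's inequality (Lemma \ref{lm6} with $\alpha=2$) and Cauchy--Schwarz. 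Your induction does close: multiplying the hypothesis at $n-1$ by $|\langle A\hat{k}_{\lambda},\hat{k}_{\lambda}\rangle|=|\langle\hat{k}_{\lambda},A^*\hat{k}_{\lambda}\rangle|$, Buzano applied to the leading term yields $\frac{1}{2^{n-1}}|\langle A^n\hat{k}_{\lambda},\hat{k}_{\lambda}\rangle|$ plus $\frac{1}{2^{n-1}}\|A^{n-1}\hat{k}_{\lambda}\|\,\|A^*\hat{k}_{\lambda}\|$, which is precisely the $i=n-1$ summand, while Cauchy--Schwarz raises the power of $\|A^*\hat{k}_{\lambda}\|$ in the remaining $i=1,\dots,n-2$ terms from $n-1-i$ to $n-i$. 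What your route buys is a self-contained argument that does not depend on the external (preprint) reference; what the paper's route buys is brevity.
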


\begin{proof}
	For every positive integer $n \geq 2$ and for all $x \in \mathcal{H}$ with $\|x\|=1,$ the following inequality (\cite[Theorem 3.1]{B_arxiv_23}) holds 
	\begin{eqnarray}\label{T20eq1}
		|\langle Ax,x \rangle|^n \leq \frac{1}{2^{n-1}}|\langle A^nx,x \rangle|+\sum_{i=1}^{n-1}\frac{1}{2^i}\|A^ix\|\|A^*x\|^{n-i}.
	\end{eqnarray}
	Let $\hat{k}_{\lambda}$ be a normalized reproducing kernel of $\mathcal{H}$. Now, putting $x=\hat{k}_{\lambda}$ in \eqref{T20eq1}, we get
	\begin{eqnarray*}
		|\langle A\hat{k}_{\lambda},\hat{k}_{\lambda}\rangle|^n
	 &\leq& \frac{1}{2^{n-1}}|\langle A^n\hat{k}_{\lambda},\hat{k}_{\lambda} \rangle|+\sum_{i=1}^{n-1}\frac{1}{2^i}\|A^i\hat{k}_{\lambda}\|\|A^*\hat{k}_{\lambda}\|^{n-i}\\
		&=& \frac{1}{2^{n-1}}|\langle A^n\hat{k}_{\lambda},\hat{k}_{\lambda} \rangle|+
		\sum_{i=1}^{n-1}\frac{1}{2^i}\langle A^{*i}A^i \hat{k}_{\lambda}, \hat{k}_{\lambda} \rangle^{1/2}\langle AA^* \hat{k}_{\lambda}, \hat{k}_{\lambda} \rangle^{\frac{n-i}{2}}\\
		&\leq & \frac{1}{2^{n-1}}\textbf{ber}(A^n)+\sum_{i=1}^{n-1}\frac{1}{2^i}\|A^{*i}A^i\|^{1/2}_{ber}\|AA^*\|^{\frac{n-i}{2}}_{ber}.
	\end{eqnarray*}
Taking the supremum over all $\lambda\in\Omega$ we get the desired result.
\end{proof}

\noindent {\textbf{Data availability statement.}}\\
Data sharing is not applicable to this article as no new data were created or analyzed in this study.\\

\noindent {\textbf{Disclosure statement.}}\\
There are no relevant financial or non-financial competing interests to report.

	\bibliographystyle{amsplain}

\end{document}